\newtheorem{fact}{{\bf Fact}}
\providecommand{\BBb}[1]{{\mathbb{#1}}}
\providecommand{\cal}[1]{{\mathcal{#1}}}   % visse (nyere) versioner har \cal
\newcommand{\B}{{\BBb B}}
\newcommand{\C}{{\BBb C}}
\newcommand{\dual}[2]{\langle\,#1,\,#2\,\rangle}
\newcommand{\im}{\operatorname{i}}
\renewcommand{\Im}{\operatorname{Im}}
\newcommand{\lap}{\operatorname{\Delta}}
\newcommand{\mlap}{-\!\operatorname{\Delta}}
\newcommand{\op}[1]{\operatorname{#1}}
\newcommand{\N}{\BBb N}
\newcommand{\point}{\operatorname{point}}     
\renewcommand{\Re}{\operatorname{Re}}
\newcommand{\R}{{\BBb R}}
\newcommand{\Rn}{{\BBb R}^{n}}
\newcommand{\Span}{\operatorname{span}}
\newcommand{\supp}{\operatorname{supp}}
\newtheorem{Theorem}{Theorem}
\newtheorem{Proposition}{Proposition}
\newtheorem{Lemma}{Lemma}
\newtheorem{Corollary}{Corollary}
\theoremstyle{definition}
\newtheorem{Example}{Example}
\theoremstyle{remark}
\newtheorem{Remark}{Remark}
\newcommand{\1}{{\ensuremath{\mathbf{1}}}}
\newcommand{\D}{\mathcal{D}}
\newcommand{\A}{\mathcal{A}}
\newcommand{\Gen}{\mathbf{A}}
\newcommand{\vvvert}{{|\hspace{-1.6pt}|\hspace{-1.6pt}|}}
\newcommand{\ip}[2]{\ensuremath{({#1}\,|\,{#2})}} % Indre produkt
\newcommand{\dualp}[2]{\ensuremath{\langle{#1},{#2}\rangle}} % dualprodukt
\def\Xint#1{\mathchoice
{\XXint\displaystyle\textstyle{#1}}%
{\XXint\textstyle\scriptstyle{#1}}%
{\XXint\scriptstyle\scriptscriptstyle{#1}}%
{\XXint\scriptscriptstyle\scriptscriptstyle{#1}}%
\!\int}
\def\XXint#1#2#3{{\setbox0=\hbox{$#1{#2#3}{\int}$}
\vcenter{\hbox{$#2#3$}}\kern-.5\wd0}}
\def\dashint{\Xint-}
\begin{document}
\title[Final value problems and well-posedness]{%
Final value problems for parabolic differential equations\\ and their well-posedness}
\keywords{Parabolic boundary problem,  final value, compatibility condition, well
  posed, non-selfadjoint, hyponormal}
\author[Christensen and Johnsen]{A.-E.~Christensen and J.~Johnsen%
}
\address{%
%\\ 
Department of Mathematics, Aalborg University,
%\\ 
Skjernvej 4A, DK-9220 Aalborg {\O}st, Denmark%
}
\email{jjohnsen@math.aau.dk}
\begin{abstract} 
  This article concerns the basic understanding of parabolic final
  value problems, and a large class of such problems is proved to be well posed.
  The clarification is obtained via explicit Hilbert spaces that characterise the possible data,
  giving existence, uniqueness and stability of the corresponding solutions.  
  The data space is given as the graph normed domain of an unbounded operator occurring naturally
  in the theory. It induces a new compatibility
  condition, which relies on the fact, shown here, that analytic semigroups always are invertible
  in the class of closed operators. The general set-up is evolution equations for
  Lax--Milgram operators in spaces of vector distributions.
  As a main example, the final value problem of the heat equation on a smooth open set is
  treated, and non-zero Dirichlet data are shown to require a non-trivial extension of the
  compatibility condition by addition of an improper Bochner integral.  
\end{abstract}
\thanks{The second author is supported by the Danish 
Research Council, Natural Sciences grant no.~4181-00042.
\\[9\jot]{\tt Appeared online in Axioms, vol.\ 7 (2018), no.\ 31, 36 pp.; doi:10.3390/axioms7020031}}
\subjclass[2010]{35A01,47D06}

\maketitle

\section{Introduction}\label{intro-sect}
\thispagestyle{empty}

In this article we establish well-posedness of final value problems
for a large class of parabolic differential equations. Seemingly, this 
clarifies a longstanding gap in the comprehension of such problems.

Taking the heat equation as a first example, we 
address the problem of characterising the functions $u(t,x)$ that, in a $C^\infty$-smooth bounded open set
$\Omega\subset\Rn$ with boundary $\partial\Omega$, 
fulfil the equations, where $\Delta=\partial_{x_1}^2+\dots+\partial_{x_n}^2$ denotes the Laplacian,
\begin{equation}  \label{heat-intro}
\left\{
\begin{aligned}
  \partial_tu(t,x)-\lap u(t,x)&=f(t,x) &&\quad\text{for $t\in\,]0,T[\,$,  $x\in\Omega$,}
\\
   u(t,x)&=g(t,x) &&\quad\text{for $t\in\,]0,T[\,$, $x\in\partial\Omega$,}
\\
  u(T,x)&=u_T(x) &&\quad\text{for $x\in\Omega$}.
\end{aligned}
\right.
\end{equation}

Motivation for doing so could be given by imagining a nuclear power plant, which is 
hit by a power failure at time $t=0$. Once power is regained at time $t=T$, and a 
measurement of the reactor temperature $u_T(x)$ is obtained, it is of course desirable to
calculate backwards in time to provide an answer to the question: were temperatures $u(t,x)$
around some earlier time $t_0<T$ high enough to cause a meltdown of the fuel rods ?

We provide here a theoretical analysis of such problems and prove that they are well-posed, i.e., they  have 
\emph{existence, uniqueness} and \emph{stability} of solutions $u\in X$ for given data
$(f,g,u_T)\in Y$, in~certain normed spaces $X$, $Y$ to be specified below.
The results were announced without proofs in the short note \cite{ChJo18}.

Although well-posedness is of decisive importance for the interpretation and accuracy of 
numerical schemes, which one would use in practice, such a theory has seemingly not been worked out
before. Explained roughly, our method is to provide a useful structure on the reachable set for a
general class of parabolic differential equations.

\subsection{Background}
Let us first describe the case $f=0$, $g=0$. Then the mere heat equation
$(\partial_t-\Delta)u=0$ is clearly solved for all $t\in\R$ by the function  
$u(t,x)=e^{(T-t)\lambda}v(x)$, if $v(x)$ is an eigenfunction of the Dirichlet realization
$\mlap_D$ of the Laplace operator with eigenvalue $\lambda$.

In view of this, the homogeneous final value problem
\eqref{heat-intro} would obviously have the above $u$ as a \emph{basic} solution if, coincidentally,
the final data $u_T(x)$ were given as the eigenfunction $v(x)$.
The theory below includes the set $\cal B$ of such basic solutions $u$ together with its linear hull
$\cal E=\Span\cal B$ and a certain completion $\overline{\cal E}$. 

It is easy to describe $\cal E$ in terms of the eigenvalues 
$0<\lambda_1\le\lambda_2\le\dots$ and the associated $L_2(\Omega)$-orthonormal
basis $e_1, e_2,\dots$ of eigenfunctions  of $\mlap_D$: corresponding to final data $u_T$ in $\op{span}(e_j)$, which are the $u_T$
having \emph{finite} expansions $u_T(x)=\sum_j \ip{u_T}{e_j} e_j(x)$ in $L_2(\Omega)$, the space 
$\cal E$ consists of solutions $u(t,x)$ being \emph{finite} sums
\begin{equation}
  \label{basic-id}
  u(t,x)=\textstyle{\sum_j}\, e^{(T-t)\lambda_j}\ip{u_T}{e_j}e_j(x).
\end{equation}
Moreover, at time $t=0$ there is, because of the finiteness, a vector $u(0,x)$ in $L_2(\Omega)$ that trivially fulfills
\begin{equation}
  \|u(0,\cdot)\|^2 =\textstyle{\sum_j}\, e^{2T\lambda_j}|\ip{u_T}{e_j}| ^2 <\infty.
  \label{basic-ineq}
\end{equation}

However, when summation is extended to all $j\in\N$, condition \eqref{basic-ineq} 
becomes very strong, as it is only satisfied for special $u_T$:
Weyl's law for the counting function, cf.~\cite[Ch.~6.4]{CuHi53}, entails the well-known
fact $\lambda_j= \cal O(j^{2/n})$, 
so a single term in \eqref{basic-ineq} yields $|\ip{u_T}{e_j}|\le c \exp({-Tj^{2/n}})$;
whence the $L_2$-coordinates of such $u_T$ decay rapidly for $j\to\infty$.

Condition \eqref{basic-ineq} has been known at least since the 1950s; the work of John~\cite{John55}
and Miranker \cite{Miranker61} are the earliest we know. While many authors have avoided an
analysis of it, Payne found it scientifically intolerable because $u_T$ is likely to be imprecisely 
measured; cf.\ his treatise \cite{Pay75} on the variety of methods applied to \eqref{heat-intro} until the mid 1970s. 

More recently e.g.\ Isakov~\cite{Isa98} emphasized the classical observation, found already in
\cite{Miranker61}, that \eqref{basic-id} implies a phenomenon of \emph{instability}. Indeed, the
sequence of final data $u_{T,k}=e_k$ has constant length $1$, yet via \eqref{basic-id} it gives the
initial states $u_k(0,x)=e^{T\lambda_k}e_k(x)$ having $L_2$-norms $e^{T\lambda_k}$, which clearly
blow up rapidly for $k\to\infty$. 

This $L_2$-instability cannot be explained away, of course, but it does \emph{not} rule out that
\eqref{heat-intro} is well-posed.  
It rather indicates that the $L_2$-norm is an insensitive choice for \eqref{heat-intro}.

In fact, here there is an analogy with the classical stationary Dirichlet problem 
\begin{equation} \label{Dir-pb}
  -\Delta u=f \quad\text{in $\Omega$},\qquad u=g \quad\text{on $\partial\Omega$}.
\end{equation}
This is \emph{unsolvable} for $u\in C^2(\overline{\Omega})$ given certain $f\in
C^0(\overline{\Omega})$, $g\in C^0(\partial\Omega)$: 
G{\"u}nther proved prior to 1934, cf.~\cite[p.85]{Gun67}, that when 
$f(x)= \chi(x)(3x_3^2|x|^{-2}-1)/\log|x|$ for some radial cut-off function $\chi\in
C_0^\infty(\Omega)$ equal to $1$ around the origin, $\Omega$ being the unit ball of $\R^3$, then 
$f\in C^0(\overline{\Omega})$ while the convolution
$w=\frac{1}{4\pi|x|}*f$ is in $C^1(\overline{\Omega})$ but \emph{not} in $C^2$ at $x=0$; 
so $w\in C^1(\overline\Omega)\setminus C^2(\overline\Omega)$.
Yet $w$ is the unique $C^1(\overline\Omega)$-solution of \eqref{Dir-pb} in the distribution space
${\cal D}'(\Omega)$ in the case $g$ is given as $g=w_{|\partial\Omega}$. 
Thus the $C^k$-scales constitute an insensitive choice for \eqref{Dir-pb}.
Nonetheless, replacing $C^2(\overline{\Omega})$ by its completion $H^1(\Omega)$ in the Sobolev norm
$(\sum_{|\alpha|\le1}\int_\Omega|D^\alpha u|^2\,dx)^{1/2}$, it is classical that 
\eqref{Dir-pb} is well-posed with $u$ in~$H^1(\Omega)$.  

To obtain similarly well-adapted spaces for \eqref{heat-intro} with $f=0$, $g=0$, one could
base the analysis \mbox{on \eqref{basic-ineq}}. Indeed, along with the above space
$\cal E$ of basic solutions, 
a norm $\vvvert u_T\vvvert$ on the  final data $u_T\in\op{span}(e_j)$ can be \emph{defined}
by \eqref{basic-ineq}, leading to the norm 
$\vvvert u_T \vvvert=(\sum_{j=1}^\infty  e^{2T\lambda_j}|\ip{u_T}{e_j}|^2)^{\frac12}$ on the $u_T$
that correspond to solutions $u$ in the completion $\overline{\cal E}$. This would give well-posedness of
\eqref{heat-intro} with $u\in\overline{\cal E}$;
cf.\ Remark~\ref{BE-rem}.

But the present paper goes much beyond this. For one thing, we have freed the discussion from
$-\Delta_D$ and its specific eigenvalue distribution by using sesqui-linear forms, cf.\ Lax--Milgram's lemma, 
which allowed us to extend the proofs to a general class of elliptic operators $A$.

Secondly we analyse the \emph{fully} inhomogeneous problem \eqref{heat-intro} 
for general $f$, $g$ in Section~\ref{heat-sect}. 
In this situation well-posedness is not just a matter of choosing the norm on the  data $(f,g,u_T)$
suitably (as~one might think from the above $\vvvert u_T\vvvert$). In fact, prior to this choice, 
one has to \emph{restrict} the $(f,g,u_T)$ to a subspace characterised by certain \emph{compatibility conditions}.
While such conditions are well known in the theory of parabolic boundary problems, they are shown here
to have a new and special form for final value problems.

Indeed, the compatibility conditions stem from the unbounded operator $u_T\mapsto u(0)$, which
maps the final data to the corresponding initial state in the presence of the source term $f$. 
The fact that this operator is well
defined, and that its domain endowed with the graph norm yields the data space, is the
leitmotif of this article.

\subsection{The Abstract Final Value Problem}
Let us outline our analysis given for a
Lax--Milgram operator $A$ defined in $H$ from a $V$-elliptic sesquilinear form $a(\cdot,\cdot)$ 
in a Gelfand triple, i.e.,
in a set-up of three Hilbert spaces $V\hookrightarrow H\hookrightarrow V^*$ 
having norms denoted $\|\cdot\|$, $|\cdot|$ and $\|\cdot\|_*$, and where $V$ is the form domain of $a$. 

In this framework, we consider the following general final value problem: given data  
\begin{equation}
  f\in L_2(0,T; V^*),\qquad u_T\in H,
\end{equation}
determine the $V$-valued vector distributions $u(t)$ on $\,]0,T[\,$, that is the $u\in{\cal D}'(0,T;V)$, fulfilling
\begin{equation}
  \label{eq:fvA-intro}
  \left\{
  \begin{aligned}
  \partial_tu +Au &=f  &&\quad \text{in ${\cal D}'(0,T;V^*)$},
\\
  u(T)&=u_T &&\quad\text{in $H$}.
\end{aligned}
\right.
\end{equation}
Classically a wealth of parabolic Cauchy problems with homogeneous boundary 
conditions have been efficiently treated with the triples $(H,V,a)$ and the $\cal D'(0,T;V^*)$
set-up in \eqref{eq:fvA-intro}. For this the reader may consult
the work of Lions and Magenes~\cite{LiMa72}, Tanabe~\cite{Tan79}, Temam~\cite{Tm}, Amann~\cite{Ama95}.
Also recently, e.g., Almog, Grebenkov, Helffer, Henry  studied
variants of the complex Airy operator via such \mbox{triples \cite{AlHe15,GrHelHen17,GrHe17}}, and our
results should at least extend to final value problems for those of their realisations that have
non-empty spectrum.

To compare \eqref{eq:fvA-intro} with the analogous Cauchy problem, we recall that whenever $u'+Au=f$
is solved under the initial condition $u(0)=u_0\in H$, for some $f\in L_2(0,T;V^*)$, there is a
unique solution $u$ in the Banach space 
\begin{equation}
  \begin{split}
  X=&L_2(0,T;V)\bigcap C([0,T];H) \bigcap H^1(0,T;V^*),
\\
  \|u\|_X=&\big(\int_0^T \|u(t)\|^2\,dt+\sup_{0\le t\le T}|u(t)|^2+\int_0^T (\|u(t)\|_*^2   +\|u'(t)\|_*^2)\,dt\big)^{1/2}.   
  \end{split}
  \label{eq:X-intro}
\end{equation}
For \eqref{eq:fvA-intro} it would thus be natural to envisage solutions $u$ in the same space $X$.
This turns out to be true, but only under substantial further conditions on the data $(f, u_T)$. 

To formulate these, we exploit that $-A$ generates an \emph{analytic} semigroup $e^{-tA}$ in $\B(H)$. 
This is crucial for the entire article, because analytic semigroups always are invertible in the
class of closed operators, as we show in Proposition~\ref{inj-prop}.
We denote its inverse by $e^{tA}$, consistent with the case $-A$ generates a group,
\begin{equation}
  \label{eq:inverse-intro}
  (e^{-tA})^{-1}=e^{tA}.
\end{equation}

Its domain is the Hilbert space $D(e^{tA})=R(e^{-tA})$ that is normed by $\|u\|=(|u|^2+|e^{tA}u|^2)^{1/2}$.
\mbox{In Proposition}~\ref{dom-prop}  we show that a non-empty spectrum, $\sigma(A)\ne\emptyset$,
yields strict inclusions
\begin{equation} \label{dom-intro}
  D(e^{t'A})\subsetneq D(e^{t A})\subsetneq H \qquad\text{ for  $0<t<t'$}.
\end{equation}

For $t=T$ these domains play a crucial role in the well-posedness result below, cf.~\eqref{eq:cc-intro},
where also the full yield $y_f$ of the source term $f$ on the system appears, namely
\begin{equation} \label{yf-eq}
  y_f= \int_0^T e^{-(T-s)A}f(s)\,ds.
\end{equation}
The map $f\mapsto y_f$ takes values in $H$, and it is a continuous surjection $y_f\colon
L_2(0,T;V^*)\to H$.

\begin{Theorem} \label{intro-thm}
  For the final value problem \eqref{eq:fvA-intro} to have a solution
  $u$ in the space $X$ in \eqref{eq:X-intro}, it is necessary and sufficient that the data
  $(f,u_T)$ belong to the subspace $Y$ of $L_2(0,T; V^*)\oplus H$ defined
  by the condition  
  \begin{equation}
    \label{eq:cc-intro}
    u_T-\int_0^T e^{-(T-t)A}f(t)\,dt \ \in\  D(e^{TA}).
  \end{equation}  
Moreover, in $X$ the solution $u$ is unique and depends continuously on the data $(f,u_T)$ in $Y$,
that is, we have $\|u\|_X\le c\|(f,u_T)\|_Y$,
when $Y$ is given the graph norm
\begin{equation}
  \label{eq:Y-intro}
  \|(f,u_T)\|_Y=
  \left(|u_T|^2+\int_0^T\|f(t)\|_*^2\,dt+\Big|e^{TA}\big(u_T-\int_0^Te^{-(T-t)A}f(t)\,dt\big)\Big|^2
  \right)^{1/2}.
\end{equation}
(The full statements are found in Theorem~\ref{fvp-thm} and Theorem~\ref{wp-thm} below.)
\end{Theorem}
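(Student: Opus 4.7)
The plan is to reduce Theorem~\ref{intro-thm} to the well-posedness of the corresponding \emph{Cauchy problem}. For arbitrary $u_0\in H$ and $f\in L_2(0,T;V^*)$, the initial value problem $u'+Au=f$ in ${\cal D}'(0,T;V^*)$ with $u(0)=u_0$ is known to admit a unique solution $u\in X$ given by the variation-of-constants formula
\[
  u(t)=e^{-tA}u_0+\int_0^t e^{-(t-s)A}f(s)\,ds,
\]
with a continuous estimate $\|u\|_X\le c(|u_0|+\|f\|_{L_2(0,T;V^*)})$. This is standard Lions--Magenes theory in the present Gelfand-triple framework, and I would first recall or reprove it, exploiting that $-A$ generates an analytic semigroup in ${\cal B}(H)$.

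Evaluating at $t=T$ yields $u(T)=e^{-TA}u_0+y_f$ with $y_f$ as in \eqref{yf-eq}. Hence a function $u\in X$ solves \eqref{eq:fvA-intro} if and only if it is the Cauchy solution for some $u_0\in H$ satisfying
\[
  e^{-TA}u_0 = u_T - y_f.
\]
By Proposition~\ref{inj-prop} the analytic semigroup $e^{-TA}$ is injective on $H$, so such a $u_0$ exists if and only if $u_T - y_f$ lies in the range $R(e^{-TA})=D(e^{TA})$, which is precisely the compatibility condition \eqref{eq:cc-intro}; when it does, $u_0$ is determined uniquely by $u_0=e^{TA}(u_T-y_f)$. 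This simultaneously produces existence, uniqueness, and an explicit reconstruction of the $X$-solution from the data.

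For the stability estimate, observe that the $Y$-norm in \eqref{eq:Y-intro} is tailored so that $|u_0|^2=|e^{TA}(u_T-y_f)|^2$ is exactly the last term of $\|(f,u_T)\|_Y^2$. Together with the announced continuity of $f\mapsto y_f$ into $H$, the Cauchy-problem estimate therefore gives
\[
  \|u\|_X \le c\bigl(|u_0|+\|f\|_{L_2(0,T;V^*)}\bigr) \le c'\,\|(f,u_T)\|_Y.
\]
Uniqueness within $X$ may alternatively be checked directly: if $w\in X$ satisfies $w'+Aw=0$ and $w(T)=0$, then $w(T)=e^{-(T-s)A}w(s)$ for each $s<T$, and injectivity forces $w\equiv 0$.

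The principal obstacle is the injectivity of the analytic semigroup supplied by Proposition~\ref{inj-prop}; without it neither can $D(e^{TA})$ be identified with $R(e^{-TA})$, nor can $e^{-TA}$ be inverted to recover $u_0$ from the final datum. Secondary technical points are the continuity of the map $y_f\colon L_2(0,T;V^*)\to H$ and the validity of the variation-of-constants representation when $A$ is only a Lax--Milgram operator acting in the triple $V\hookrightarrow H\hookrightarrow V^*$; both are known but must be invoked carefully in the low-regularity framework ${\cal D}'(0,T;V^*)$.
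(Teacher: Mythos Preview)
Your proposal is correct and follows essentially the same route as the paper: reduce to the Cauchy problem via the variation-of-constants formula, use the injectivity of the analytic semigroup (Proposition~\ref{inj-prop}) to set up the bijection $u(0)\leftrightarrow u(T)$ and hence identify the compatibility condition \eqref{eq:cc-intro}, and then derive the stability estimate by inserting $u_0=e^{TA}(u_T-y_f)$ into the Cauchy-problem bound. The technical caveats you flag---that the Duhamel formula must be justified in the $V^*$-setting (the paper does this in Theorem~\ref{thm:repr_for_u} via the extended semigroup $e^{-t\A}$ on $V^*$) and that $y_f$ is bounded into $H$---are precisely those the paper treats with care.
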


Condition \eqref{eq:cc-intro} is a fundamental novelty for the above class of final value problems, but
more generally it also gives an important clarification for parabolic differential equations. 

As for its nature, we note that the data $(f,u_T)$ fulfilling \eqref{eq:cc-intro} form a 
Hilbert(-able) space $Y$ embedded into
$L_2(0,T; V^*)\oplus H$, in view of its norm in \eqref{eq:Y-intro}. 

Using the above $y_f$, \eqref{eq:Y-intro} is seen to be the graph norm of $(f,u_T)\mapsto e^{TA}(u_T-y_f)$,
which in terms of $\Phi(f,u_T)=u_T-y_f$ is the unbounded operator $e^{TA}\Phi$ from $L_2(0,T; V^*)\oplus H$ to $H$.  
As \eqref{eq:cc-intro} means that the operator $e^{TA}\Phi$ must be defined at
$(f,u_T)$, the space $Y$ is its domain.
Thus $e^{TA}\Phi$ is a key ingredient in the rigorous treatment of \eqref{eq:fvA-intro}.

The role of $e^{TA}\Phi$ is easy to elucidate in control theoretic terms: 
its value $e^{TA}\Phi(f,u_T)$ simply equals the particular initial state $u(0)$ that is steered by $f$ 
to the final state $u(T)=u_T$ at time $T$;
cf.~\eqref{eq:bijection-intro} below.

Because of $e^{-(T-t)A}$ and the integral over $[0,T]$, 
\eqref{eq:cc-intro} involves \emph{non-local} operators in both space and time as an inconvenient
aspect\,---\,exacerbated by use of the abstract domain $D(e^{TA})$, which for longer lengths $T$ of 
the time interval gives increasingly stricter conditions; cf.\ \eqref{dom-intro}. 

Anyhow, we propose to regard \eqref{eq:cc-intro} as a \emph{compatibility} condition on
the data $(f,u_T)$, and thus we generalise the notion of compatibility.
 
For comparison we recall that Grubb and Solonnikov~\cite{GrSo90} made a systematic investigation
of a large class of \emph{initial}-boundary problems of parabolic (pseudo-)differential equations
and worked out compatibility conditions, which are necessary and sufficient for
well-posedness in full scales of anisotropic $L_2$-Sobolev spaces. Their conditions are explicit
and local at the curved corner $\partial\Omega\times\{0\}$, except for 
half-integer values of the smoothness $s$ that were shown to require so-called coincidence, which 
is expressed in integrals over the product of the two boundaries $\{0\}\times\Omega$ and
$\,]0,T[\,\times\,\partial\Omega$; hence it also is a non-local condition.  

However, whilst the conditions of Grubb and Solonnikov~\cite{GrSo90} are decisive for the solution's regularity, 
condition \eqref{eq:cc-intro} is crucial for the \emph{existence} question; cf.\ the theorem.

Previously, uniqueness was shown by Amann~\cite[Sect.~V.2.5.2]{Ama95} in a $t$-dependent set-up, 
but injectivity of $u(0)\mapsto u(T)$ was proved much earlier for problems with $t$-dependent
sesquilinear forms by Lions and Malgrange~\cite{LiMl60}.

Showalter~\cite{Sho74} attempted to characterise the possible $u_T$ in terms of Yosida
approximations for $f=0$ and $A$ having half-angle $\frac\pi4$.
As an ingredient, 
invertibility of analytic semigroups was claimed in \cite{Sho74} for such $A$, but the proof was flawed as
$A$ can have semi-angle $\pi/4$ even if $A^2$ is not accretive; cf.\ our example in Remark~\ref{Show-rem}.

Theorem~\ref{intro-thm} is proved largely by comparing with the corresponding problem $u'+Au=f$,
$u(0)=u_0$. It is well known in functional analysis, cf.\ \eqref{eq:X-intro}, that this is
well-posed for data $f\in L_2(0,T;V^*)$, $u_0\in H$, with solutions $u\in X$. 
However, as shown below by adaptation of a classical argument, $u$ is also in this
set-up necessarily given by Duhamel's principle, or the 
variation of constants formula, for the analytic semigroup $e^{-tA}$ in $V^*$, 
\begin{equation}
  \label{eq:bijection-intro}
  u(t)=e^{-tA}u(0)+ \int_0^t e^{-(t-s)A}f(s)\,ds.
\end{equation}

For $t=T$ this yields a \emph{bijective correspondence} $u(0)\leftrightarrow u(T)$ between the initial
and terminal states (in particular backwards uniqueness of the solutions in the large class $X$)---but
this relies crucially on the previously mentioned invertibility of  $e^{-tA}$; cf.\
\eqref{eq:inverse-intro}.

As a consequence of \eqref{eq:bijection-intro} one finds the necessity of \eqref{eq:cc-intro}, as the
difference $\Phi(f,u_T)=u_T-y_f$ in \eqref{eq:cc-intro} must equal the vector $e^{-TA}u(0)$, 
which obviously belongs to $D(e^{TA})$.  

Moreover, \eqref{eq:bijection-intro} yields that $u(T)$ in a natural way consists of two 
parts, that differ radically even when $A$ has nice properties: 

First, $e^{-tA}u(0)$ solves the semi-homogeneous problem with $f=0$, and for $u(0)\ne0$ there is 
the precise property in non-selfadjoint dynamics that the ``height'' function $h(t)$ is
\emph{strictly convex}, 
\begin{equation}
  h(t)= |e^{-tA}u(0)|.
\end{equation}
This is shown in  Proposition~\ref{sc-prop} when $A$ belongs to the broad class of hyponormal operators, 
studied by Janas \cite{Jan94}, 
or in case $A^2$ is accretive; then $h(t)$ is also strictly decreasing with $h'(0)\le-m(A)$, where
$m(A)$ is the lower bound of $A$.

The stiffness inherent in strict convexity is supplemented by the fact that $u(T)=e^{-TA}u(0)$ is
confined to a dense, but very small space, as by a well-known property of analytic semigroups, 
\begin{equation}
  \label{DAn-cnd}
  u(T)\in \textstyle{\bigcap_{n\in\N}} D(A^n).
\end{equation}

Secondly, for $u_0=0$ the integral in   \eqref{eq:bijection-intro} solves the initial value problem, and it has a
rather different nature since its final value $y_f$ in \eqref{yf-eq} is surjective
$y_f\colon L_2(0,T;V^*)\to H$, hence 
can be \emph{anywhere} in $H$, regardless of the Lax--Milgram operator $A$ in our set-up.
This we show in Proposition~\ref{Ryf-prop} using a kind of control-theoretic argument in case $A$ is self-adjoint with compact
inverse; and for general $A$ by means of the Closed Range Theorem,
cf.~Proposition~\ref{yf-prop}.

For the reachable set of the equation $u'+Au=f$, or rather the possible final data
$u_T$, they will be a sum of an arbitrary vector $y_f$ in $H$ and a
term $e^{-TA}u(0)$ of great stiffness (cf.~\eqref{DAn-cnd}). Thus $u_T$ can be prescribed in the affine space
$y_f+D(e^{TA})$. As any $y_f\ne0$ will push the dense set $D(e^{TA})\subset H$
in some arbitrary direction, $u(T)$ can be expected \emph{anywhere} in $H$ (unless $y_f\in D(e^{TA})$ is known a priori).
Consequently neither $u(T)\in D(e^{TA})$ nor \eqref{DAn-cnd} can be expected to hold for $y_f\ne0$, 
not even if its norm $|y_f|$ is much smaller than $|e^{-TA}u(0)|$.

As for final state measurements in real life applications, we would like to prevent a
misunderstanding by noting that it is only under the peculiar circumstance that $y_f=0$ is known
\emph{a priori} to be an \emph{exact} identity that \eqref{DAn-cnd} would be a valid expectation on $u(T)$.

Indeed, even if $f$ is so small that it is (quantitatively)
insignificant for the time development of the system governed by $u'+Au=f$, so that $f=0$ is a
valid dynamical approximation, the (qualitative) \emph{mathematical} expectation that $u(T)$
should fulfill \eqref{DAn-cnd} cannot be justified from such an approximation; cf.\ the
above. 

In view of this fundamental difference between the problems that are truly and merely approximately homogeneous, 
it seems that proper understanding of final value problems is facilitated by treating 
inhomogeneous problems from the very beginning. 

\subsection{The Inhomogeneous Heat Problem}
For \eqref{heat-intro} with general data $(f,g,u_T)$ the above is applied with
$A=\mlap_D$, that is the Dirichlet realisation of the Laplacian. The results are
analogous, but less simple to state and more demanding to obtain.

First of all, even though it is a linear problem,  
the compatibility condition \eqref{eq:cc-intro}
\emph{destroys} the old trick of reducing to boundary data $g=0$, 
for when $w\in H^1$ fulfils $w=g\ne0$ on the curved boundary $S=\,]0,T[\,\times\partial\Omega$, then $w$
\emph{lacks} the regularity needed to test  \eqref{eq:cc-intro} on the data 
$(\tilde f,0,\tilde u_T)$ of the reduced problem; cf.\ \eqref{tilde-data} ff.

Secondly, it is, therefore, non-trivial to clarify that every $g\ne0$ \emph{does} give rise to an extra term $z_g$, 
\mbox{in the }sense that \eqref{eq:cc-intro} is replaced by the compatibility condition
\begin{equation}\label{eq:yzcc-intro}
  u_T-y_f+z_g\in D(e^{-T\Delta_{D}}).  
\end{equation}

Thirdly, due to the low reqularity, it requires technical diligence to show that
$z_g$, despite the singularity of $\Delta e^{(T-s) \Delta_{D}}$ at $s=T$,  has the structure
of a single convergent improper Bochner integral, namely 
\begin{equation}
  \label{eq:zg-intro}
  z_g = \dashint_0^T \Delta e^{(T-s) \Delta_D} K_0 g(s) \,ds.  
\end{equation}
The reader is referred to Section~\ref{heat-sect} for the choice of the Poisson operator
$K_0$ and for an account of the results on the fully inhomogeneous problem in
\eqref{heat-intro},
especially Theorem~\ref{yz-thm} and Corollary~\ref{yz-cor}, which we sum up here:

\begin{Theorem}  \label{yz'-thm}
For given data $f \in L_2(0,T; H^{-1}(\Omega))$, $g \in H^{1/2}(S)$,  
$u_T \in L_2(\Omega)$ 
the final value problem \eqref{heat-intro} is solved by a function $u$ in
$X_1= L_2(0,T; H^1(\Omega)) \bigcap C([0,T];L_2(\Omega)) \bigcap H^1(0,T; H^{-1}(\Omega))$,
if and only if the data in terms of \eqref{yf-eq} and \eqref{eq:zg-intro} satisfy the compatibility
condition \eqref{eq:yzcc-intro}.
In the affirmative case, $u$ is uniquely determined in $X_1$ and has the representation, with all
{terms in $X_1$,}
\begin{align} 
  u(t) = 
   e^{t \Delta_{D}} e^{-T \Delta_{D}} (u_T - y_f + z_g) 
   + \int_0^t e^{(t-s)\Delta} f(s) \,ds - \dashint_0^t \Delta e^{(t-s) \Delta_{D}} K_0 g(s) \,ds,
\end{align}
The unique solution $u$ in $X_1$ 
depends continuously on the data $(f,g,u_T)$ in the Hilbert space $Y_1$, when these are given 
 the norms in \eqref{Xheat-nrm} and \eqref{Yheat-nrm} below, respectively.
\end{Theorem}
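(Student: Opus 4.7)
My proof plan is to reduce \eqref{heat-intro} to the abstract framework of Theorem~\ref{intro-thm} by subtracting a Poisson extension of the boundary data, working in the Gelfand triple $V=H^1_0(\Omega)\hookrightarrow H=L_2(\Omega)\hookrightarrow V^*=H^{-1}(\Omega)$ with the Lax--Milgram operator $A=-\Delta_D$. Writing $u=v+K_0g$, where $K_0$ is the Poisson operator chosen in Section~\ref{heat-sect}, the auxiliary $v$ has vanishing Dirichlet trace and formally solves
\[
\partial_tv-\Delta v=f-\partial_tK_0g=:\tilde f,\qquad v(T)=u_T-K_0g(T)=:\tilde u_T,
\]
with $\tilde f\in L_2(0,T;H^{-1}(\Omega))$ obtained from the mapping properties of $K_0$ on $g\in H^{1/2}(S)$. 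Theorem~\ref{intro-thm} applied to $v$ then characterises solvability in $X=L_2(0,T;H^1_0(\Omega))\cap C([0,T];L_2(\Omega))\cap H^1(0,T;H^{-1}(\Omega))$ by the condition
\[
\tilde u_T-\int_0^T e^{(T-s)\Delta_D}\tilde f(s)\,ds\in D(e^{-T\Delta_D}),
\]
and supplies a corresponding Duhamel representation for $v$.

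The bridge between this abstract condition and \eqref{eq:yzcc-intro} is a formal integration by parts in $s$, giving
\[
\int_0^T e^{(T-s)\Delta_D}\partial_sK_0g(s)\,ds=K_0g(T)-e^{T\Delta_D}K_0g(0)+\int_0^T\Delta e^{(T-s)\Delta_D}K_0g(s)\,ds.
\]
Substituting back, $K_0g(T)$ cancels the contribution from $\tilde u_T$; the term $e^{T\Delta_D}K_0g(0)$ lies in $R(e^{T\Delta_D})=D(e^{-T\Delta_D})$ and is thus irrelevant for testing membership in $D(e^{-T\Delta_D})$; and the remaining integral is precisely $z_g$ from \eqref{eq:zg-intro}. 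Running the same manipulation on the truncated interval $[0,t]$ inside the Duhamel representation of $v$, then adding back $K_0g$, yields the stated representation formula for $u$.

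The principal technical obstacle is to upgrade this formal identity to a rigorous statement: one must show that $\int_0^T\Delta e^{(T-s)\Delta_D}K_0g(s)\,ds$ defines an $L_2(\Omega)$-valued convergent improper Bochner integral, and that $g\mapsto z_g$ is continuous from $H^{1/2}(S)$ into $L_2(\Omega)$, despite the singularity $\|\Delta_D e^{(T-s)\Delta_D}\|_{\B(L_2(\Omega))}\sim(T-s)^{-1}$ at $s=T$. The key is that $g\in H^{1/2}(S)$ encodes a half-order of time regularity of $s\mapsto K_0g(s)$ near the endpoint, so that after peeling off an appropriate near-endpoint value one gains a compensating factor of order $(T-s)^{1/2}$; together with the analyticity of the semigroup this restores integrability. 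The same mechanism absorbs the would-be boundary contribution $K_0g(0)$ (which need not exist as an isolated trace, since $g$ has no pointwise time-trace at $t=0$) into the single convergent object $z_g$; this is where the paper's warning about the naive reduction to $g=0$ actually bites, and where the heart of the proof lies.

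With $z_g$ justified, the remainder is bookkeeping on top of Theorem~\ref{intro-thm}: the equivalence of the abstract and explicit compatibility conditions transfers the if-and-only-if characterisation to \eqref{eq:yzcc-intro}; uniqueness of $u=v+K_0g$ in $X_1$ follows from the uniqueness of $v$ in $X$ together with the fact that $K_0g$ is determined by the data; and continuous dependence in the $Y_1$-norm combines the $Y$-graph estimate of Theorem~\ref{intro-thm} applied to $(\tilde f,\tilde u_T)$ with the continuity of the three maps $g\mapsto K_0g$, $g\mapsto\partial_tK_0g$, and $g\mapsto z_g$ established above.
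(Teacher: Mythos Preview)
Your overall strategy---subtract an extension of $g$ and invoke the abstract result---is the right instinct, but the execution has a genuine gap, and it is precisely the one the paper warns about in the paragraph after~\eqref{tilde-data}.

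The problem is the time regularity of $K_0g$. The Poisson operator $K_0\colon H^{1/2}(\Gamma)\to H^1(\Omega)$ is purely spatial, so applying it pointwise in $s$ to $g\in H^{1/2}(S)$ inherits only whatever time regularity $g$ carries---and the isotropic half-order space $H^{1/2}(S)$ does not give you $\partial_t K_0g\in L_2(0,T;H^{-1}(\Omega))$, nor a well-defined trace $K_0g(T)\in L_2(\Omega)$. Hence your $\tilde f$ and $\tilde u_T$ are not in the data class of Theorem~\ref{intro-thm}, and $v=u-K_0g$ need not lie in $X$. You correctly flag the trouble with $K_0g(0)$ in the integration-by-parts step, but the very same objection applies to $K_0g(T)$, which you use freely in defining $\tilde u_T$; the two endpoints are on equal footing at this regularity.

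The paper's route is different in kind. It does \emph{not} reduce the final value problem to Theorem~\ref{intro-thm}. Instead it first treats the inhomogeneous \emph{initial} value problem~\eqref{eq:heat_ivp}, using a space--time right inverse $\tilde K_0\colon H^{1/2}(S)\to H^1(Q)$ (Lemma~\ref{wKg-lem}) so that $w=\tilde K_0g$ has a full $L_2$ time derivative. The Leibniz rule (Proposition~\ref{prop:Leibniz_rule}) then applies to $e^{(t-s)\Delta_{\gamma_0}}w(s)$, and after integrating over $[0,t-\varepsilon]$ two of the three terms converge trivially as $\varepsilon\to0$, forcing convergence of the third---this is how the improper integral is justified, without any direct estimate on $K_0g$. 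Only afterwards does the projection identity $K_0\gamma_0=I-\A^{-1}\Delta$ on $H^1(\Omega)$, cf.~\eqref{Dir1-id}, replace $w$ by $K_0g$ in the formula. With the IVP solution formula~\eqref{eq:ug} in hand, the bijection $u(0)\leftrightarrow u(T)$ and the compatibility condition follow as in Section~\ref{fvp-sect}.
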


\subsection{Contents}
Our presentation is aimed at describing methods and consequences in a concise way,
readable for a broad audience within evolution problems.
Therefore we have preferred a simple set-up, leaving many examples and extensions to future work,
cf.\ Section~\ref{final-sect}.

Notation is given in Section~\ref{prel-sect} together with 
the set-up for Lax--Milgram operators and semigroup theory. 
Some facts on forward evolution problems are recalled in Section~\ref{ivp-sect},
followed by our analysis of abstract final value problems in Section~\ref{fvp-sect}. 
The heat equation and its final and boundary
value problems are treated in Section~\ref{heat-sect}. Section~\ref{final-sect}
concludes with remarks on the method's applicability and notes on the literature of the problem.

\section{Preliminaries}\label{prel-sect}
In the sequel specific constants will appear as $C_j$, $j\in \N$, 
whereas constants denoted by $c$ may vary from place to place. $\1_S$ denotes the characteristic
function of the set $S$. 

Throughout $V$ and $H$ denote two separable Hilbert spaces, such that $V$
is algebraically, topologically and densely contained in $H$. Then
there is a similar inclusion into the anti-dual $V^*$, i.e.\ the space of conjugated linear
functionals on $V$,
\begin{align}  \label{VHV-eq}
  V \subseteq H \equiv H^* \subseteq V^*.
\end{align}
$(V,H,V^*)$ is also known as a Gelfand triple. 
Denoting the norms by $\|\cdot\|$, $|\cdot|$ and $\|\cdot\|_{*}$, respectively, there are constants
such that for all $v \in V$,  
\begin{equation}\label{eq:VV*emb}
  \|v\|_{*} \leq C_1 |v| \leq C_2 \|v\|.
\end{equation}
The inner product on $H$ is denoted by $\ip{\cdot}{\cdot}$; and
the sesquilinear scalar product on $V^*\times V$ by
$\dualp{\cdot}{\cdot}_{V^*,V}$ or $\dualp{\cdot}{\cdot}$, it fulfils
$|\dualp{u}{v} | \leq \| u \|_{*} \| v \|$.
The second inclusion in \eqref{VHV-eq} means that for $u \in H$,
\begin{equation}
  \label{eq:HV*}
  \dualp{u}{v} = \ip{u}{v} \quad\text{ for all $v \in V$}.
\end{equation}

For a linear transformation $A$ in $H$, the domain is written $D(A)$, while $R(A)$
denotes its range and $Z(A)$ its null-space. $\rho(A)$, $\sigma(A)$ and 
$\nu(A)=\{\,\ip{Au}{u}\mid u\in D(A),\ |u|=1\,\}$ denote the resolvent set, 
spectrum and numerical range, while 
$m(A)=\inf\Re\nu(A)$ is the lower bound of $A$.
Throughout $\mathbb B(H)$ stands for the Banach space of bounded linear operators on $H$.
  
For a given Banach space $B$ and $T>0$, we denote by $L_1(0,T;B)$ the space of equivalence classes
of functions $f\colon [0,T]\to B$ that are 
strongly measurable with $\int_0^T\| f(t)\|\,dt$ finite. For such $f$ the Bochner integral is
denoted by $\int_0^T f(t)\,dt$, cf.\ \cite{Yos80};  it fulfils 
$\dual{\int_0^T f(t)\,dt}{\lambda}=\int_0^T \dual{f(t)}{\lambda}\,dt$
for every functional $\lambda$ in the dual
space $B'$. Likewise
$L_2(0,T,B)$ consists of the strongly measurable $f$ with finite norm $(\int_0^T\|f(t)\|^2\,dt)^{1/2}$.

On an open set $\Omega\subset\Rn$, $n\ge1$, the space $C^\infty_0(\Omega)$ consists of the
infinitely differentiable functions having compact support in $\Omega$; it is given the usual $\cal
L\cal F$-topology, cf.\ \cite{G09,Swz66}. The dual space of
continuous linear functionals $\cal D'(\Omega)$ is the distribution space on $\Omega$. We use the
standard distribution theory as exposed by Grubb \cite{G09} and H\"ormander \cite{H}.
 
More generally, the space of $B$-valued vector distributions is denoted by $\cal D'(\Omega;B)$; it
consists of the continuous linear maps $\Lambda\colon C^\infty_0(\Omega)\to B$,
 cf.~\cite{Swz66}, the value of which
at $\varphi\in C^\infty_0(\Omega)$ is indicated by $\dual{\Lambda}{\varphi}$. If $\Omega$ is the
interval $\,]0,T[\,$ we also write $\cal D'(\Omega;B)=\cal D'(0,T;B)$.

The Sobolev space $H^1(0,T;B)$ consists of the $u\in\cal D'(0,T;B)$ for which both $u$, $u'$ belong
to $L_2(0,T;B)$; it is normed by $\int_0^T(\|u\|^2 +\|u'\|^2)\,dt)^{1/2}$. 
More generally $W^{1,1}(0,T;B)$ is defined by replacing $L_2$ by $L_1$.

\subsection{Lax--Milgram Operators} \label{LaxM-ssect}
Our main tool will be the Lax--Milgram operator associated to an elliptic sesquilinear
form, cf.\ the set-up in \cite[Sect.~12.4]{G09}. For the reader's sake we review this, 
also to establish a few additional points from the proofs in \cite{G09}.

We let $a(\cdot, \cdot)$ be a bounded, $V$-elliptic sesquilinear form on $V$, 
i.e., certain $C_3, C_4>0$ fulfil for all $u,v \in V$ 
\begin{align}
  |a(u,v)|  \leq C_3 \| u\| \| v\|, \qquad
  \Re a(v,v)  \geq C_4 \| v \|^2.\label{eq:V-ell}
\end{align}

Obviously, the adjoint sesquilinear form $a^*(u,v)=\overline{a(v,u)}$ inherits these properties 
(with the same $C_3$, $C_4$), and so does the ``real part'',
$a_{\Re}(u,v)=\frac12(a(u,v)+a^*(u,v))$.
Since $a_{\Re}(u,u)\ge0$, the form $a_{\Re}$ is an inner product on $V$, inducing the equivalent norm
\begin{equation}
  \label{eq:3norm}
  \vvvert u \vvvert=a_{\Re}(u,u)^{1/2},\quad\text{for $u\in V$}.
\end{equation}

We recall that $s(u,v) =\ip{Su}{v}_V$ gives a bijective correspondence between bounded
sesquilinear forms $s(\cdot,\cdot)$ on $V$ and bounded operators $S\in\B(V)$, which is
isometric since $\|S\|$ equals  the operator norm 
of the sesquilinear form $|s| = \sup\left\{|s(u,v)| \bigm| \|u\| = 1 = \|v\| \right\}$.
So the given form $a$ induces an $\A_0 \in \B(V)$ given by 
\begin{align}\label{eq:A0}
  a(u,v) = \ip{\A_0 u}{v}_V \quad \forall u,v \in V;
\end{align}
and the adjoint form $a^*$ similarly induces an operator $\A_0^*\in \B(V)$, which is seen at once
to be the adjoint of $\A_0$ in the sense that $\ip{\A_0^*v}{u}_V=\ip{v}{\A_0u}_V$.
  
The $V$-ellipticity in \eqref{eq:V-ell} shows that $\A_0$, $\A_0^*$ are both injective
with positive lower bounds $m(\A_0),\,m(\A_0^*)\ge C_4$, 
so $\A_0$, $\A_0^*$ are in fact bijections on $V$ (cf. \cite[Theorem 12.9]{G09}).

By Riesz's representation theorem, there exists a bijective
isometry $J \in \B(V,V^*)$ such that for every $v^* = J \tilde{v}$ one has 
$\dualp{J \tilde{v}}{v} = \ip{\tilde{v}}{v}_V$ for all $v \in V$.
Therefore $\A := J \circ \A_0$ is an operator in $\B(V,V^*)$, for which \eqref{eq:A0} gives
\begin{align}  \label{eq:calA}
  \dualp{\A u}{v} = a(u,v), \quad \forall u,v \in V.
\end{align}
Similarly $\A ' := J \circ \A_0^*$ fulfils
$\dualp{\A' u}{v}=\ip{\A_0^*u}{v}_V = a^*(u,v)$ for all $u,v \in V$.

Clearly $\A$ and $\A'$ are bijections, as composites of such.
Hence they give rise to a Hilbert space structure on $V^*$ with the inner product
\begin{align}
  \ip{w_1}{w_2}_{V^*} = a_{\Re}(\A^{-1} w_1, \A^{-1}w_2),
\label{ipV*-id}
\end{align}
inducing the norm $\vvvert w \vvvert_{*} = a_{\Re}(\A^{-1} w, \A^{-1}w)^{1/2}=\vvvert
\A^{-1}w\vvvert $ on $V^*$, equivalent to $\|w \|_{*}$.

The Lax--Milgram operator $A$ is defined by restriction of $\A$ to an operator in $H$, i.e.,
\begin{equation}
  A v = \A v \quad \text{for } v \in D(A):=\A^{-1}(H).
\end{equation}
So $D(A)$ consists of the $u\in V$ for which some $f\in H$ fulfils $\ip{f}{v}=a(u,v)$ for all
$v\in V$.

The reader may consult \cite[Sect.~12.4]{G09} for elementary proofs of the following: 
$A$ is closed in $H$, with $D(A)$ dense in $H$ as well as in $V$;
in $H$ also $\A'$ has these properties, and 
it equals the adjoint of $A$ in $H$; i.e., $A'|_{\A'^{-1}(H)}=A^*$.
As $A$ is closed, $D(A)$ is a {Hilbert} space with the graph norm
$\|v\|_{D(A)}^2 = |v|^2 + |Av|^2$,
and $D(A)\hookrightarrow V$ is bounded due to \eqref{eq:V-ell}.
Geometrically, $\sigma(A)$ and $\nu(A)$ are contained in the sector of $z\in\C$ given by
\begin{align} \label{Asect-id}
  |\Im z|\leq C_3 C_4^{-1} \Re z.
\end{align}
Actually $0 \in \rho(A)$ since $a$ is $V$-elliptic, so $A^{-1} \in \B(H)$; moreover
$m(A)\ge C_1C_4/C_2>0$.

Both the closed operator $A$ in $H$ and its extension $\A\in\B(V,V^*)$ are used throughout.
(For simplicity, they were both denoted by $A$ in the introduction, though.)

\subsection{The Self-Adjoint Case}
  \label{sa-ssect}
As is well known, if $A$ is selfadjoint, i.e. $A^*=A$ (or $a^*=a$), and has \emph{compact} inverse,  then
$H$ has an orthonormal basis of eigenvectors of $A$, which can be scaled to orthonormal bases 
of $V$ and $V^*$. This is recalled, because our results can be
given a more explicit form in this case, e.g.\ for $-\Delta$ in \eqref{heat-intro}.

The properties that $A$ is selfadjoint, closed, and densely defined with dense range in $H$ carry over
to $A^{-1}$, e.g.~\cite[Thm.~12.7]{G09}, so when $A^{-1}$ in addition is compact in $H$ (e.g., if
$V\hookrightarrow H$ is compact),
then the spectral theorem for compact selfadjoint operators states that 
$H$ has an orthonormal basis $(e_j)$ consisting of eigenvectors of $A^{-1}$,  
where the eigenvalues $\mu_j$ of $A^{-1}$ by the positivity can be ordered such that
\begin{align}
\mu_1 \geq \mu_2 \geq \ldots \geq \mu_j \ge\dots > 0, \quad \text{with $\mu_j \to 0$ if $j\to\infty$}. 
\end{align}

The orthonormal basis $(e_j)$ also consists of eigenvectors of $A$ with
eigenvalues  $\lambda_j={1}/{\mu_j}$. Hence
$\sigma(A) = \sigma_{\point}(A) = \{\, \lambda_j\mid j\in\N \,\}$. 
Indeed, $\sigma_{\operatorname{res}}(A)=\emptyset$ as $A^*=A$; and $A^{-1}\in\B(H)$ while
$A - \nu I = (\nu^{-1}I - A^{-1}) \nu A$ has a bounded inverse
for $\nu \neq \lambda_j$, as $\nu^{-1} \notin \sigma(A^{-1})$.

As $a_{\Re}=a$ here, $V$ is now renormed by $\vvvert v \vvvert^2 = a(v,v)$. However, if moreover 
$V$ is considered with $a(u,v)$ as inner product, then $\A\colon V\to V^*$ is the Riesz isometry; and one has 

\begin{fact}\label{fact:1}
For every $v \in V$ the $H$-expansion $v=\sum_{j=1}^{\infty} \ip{v}{e_j} e_j$ converges in $V$.
Moreover, the sequence $(e_j/\sqrt{\lambda_j})_{j\in\N}$ is an orthonormal basis for $V$, and
$\vvvert v \vvvert^2 = \sum_{j=1}^\infty \lambda_j| \ip{v}{e_j}|^2 $. 
\end{fact}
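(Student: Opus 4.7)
My plan is to work in the Hilbert space $(V, a(\cdot,\cdot))$, verify that the proposed family is orthonormal and total, and then read off the expansion and Parseval formula from standard Hilbert space theory. First I would compute, for each pair $i,j$, using $e_j \in D(A)$ with $Ae_j = \lambda_j e_j$ and the defining identity $a(u,v)=\ip{Au}{v}$ for $u \in D(A)$, $v \in V$, that
\[
  a(e_i/\sqrt{\lambda_i},\, e_j/\sqrt{\lambda_j})
   = \tfrac{1}{\sqrt{\lambda_i\lambda_j}}\,\ip{Ae_i}{e_j}
   = \tfrac{\lambda_i}{\sqrt{\lambda_i\lambda_j}}\,\ip{e_i}{e_j} = \delta_{ij},
\]
so that $(e_j/\sqrt{\lambda_j})_{j\in\N}$ is orthonormal in $(V,a)$.

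The main step, and what I expect to be the only genuinely delicate point, is totality: I would show that if $w\in V$ satisfies $a(w,e_j)=0$ for all $j$, then $w=0$. Here self-adjointness $a^*=a$ is exactly what is needed, for it yields
\[
  0 = a(w,e_j) = \overline{a(e_j,w)} = \overline{\ip{Ae_j}{w}} = \lambda_j \ip{w}{e_j},
\]
and since $\lambda_j>0$ this forces $\ip{w}{e_j}=0$ for every $j$; as $(e_j)$ is an $H$-orthonormal basis, $w=0$ in $H$, hence also in $V$. Thus $(e_j/\sqrt{\lambda_j})$ is a maximal orthonormal system in the Hilbert space $(V,a(\cdot,\cdot))$, i.e., an orthonormal basis.

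Finally, for arbitrary $v\in V$ the $V$-Fourier coefficients are computed by the same manoeuvre,
\[
  a\bigl(v,\, e_j/\sqrt{\lambda_j}\bigr)
  = \tfrac{1}{\sqrt{\lambda_j}}\,\overline{a(e_j,v)}
  = \tfrac{1}{\sqrt{\lambda_j}}\,\overline{\ip{Ae_j}{v}}
  = \sqrt{\lambda_j}\,\ip{v}{e_j}.
\]
The abstract expansion $v = \sum_j a(v,e_j/\sqrt{\lambda_j})\,(e_j/\sqrt{\lambda_j})$ in $V$ therefore collapses to $v=\sum_j \ip{v}{e_j}e_j$, with convergence in $V$; this is precisely the first assertion, and Parseval's identity in $(V,a)$ delivers $\vvvert v\vvvert^2 = \sum_j \lambda_j |\ip{v}{e_j}|^2$, completing the proof.
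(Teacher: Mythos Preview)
Your proof is correct and follows essentially the same route as the paper's: verify orthonormality of $(e_j/\sqrt{\lambda_j})$ in $(V,a)$ via $a(e_j,e_k)=\lambda_j\ip{e_j}{e_k}$, establish totality by showing that $a(w,e_j)=0$ for all $j$ forces $\ip{w}{e_j}=0$ and hence $w=0$, and then read off the expansion and Parseval's identity. The only difference is that the paper phrases the orthonormality and coefficient computations via the extension $\A$ and the pairing $\dualp{\A e_j}{e_k}$ rather than $\ip{Ae_j}{e_k}$, and abbreviates the totality argument to a single clause; your more explicit use of $a^*=a$ at that step is a welcome clarification but not a different idea.
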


\begin{proof}
The $e_j/\sqrt{\lambda_j}$ are orthonormal in $V$ since 
$a(e_j,e_k) = \dualp{\A e_j}{e_k} = \lambda_j \ip{e_j}{e_k}$, cf. \eqref{eq:calA}.
They also yield a basis for $V$ since similarly $w\in V\ominus\Span(e_j/\sqrt{\lambda_j})$ implies $w=0$.
As $\lambda_j > 0$, expansion of any $v$ in $V$ gives 
\begin{align}
v & = \sum_{j=1}^{\infty} a(v, \lambda_j^{-1/2} e_j ) \lambda_j^{-1/2} e_j = \sum_{j=1}^{\infty}
\overline{a(e_j,v)} \lambda_j^{-1}  e_j = \sum_{j=1}^{\infty} \ip{v}{e_j} e_j, 
\end{align}
whence the rightmost side converges in $V$.
This means that $v = \sum_{j=1}^{\infty} \sqrt{\lambda_j}\ip{v}{e_j} e_j/\sqrt{\lambda_j}$
is an orthogonal expansion in $V$, whence $\vvvert v \vvvert^2$ has the stated expression.
\end{proof}

For $V^*$ the set-up \eqref{ipV*-id}, \eqref{eq:calA} here gives
$\ip{w_1}{w_2}_{V^*}=a(\A^{-1}w_1,\A^{-1}w_2)=\dualp{w_1}{\A^{-1}w_2}$.

\begin{fact}  \label{fact:2}
For every $w \in V^*$ the expansion $w=\sum_{j=1}^{\infty} \dualp{w}{e_j} e_j$ converges in
$V^*$. Moreover, the sequence $(\sqrt{\lambda_j} e_j)_{j\in\N}$ is an orthonormal basis of $V^*$ and
$\vvvert w \vvvert_{*}^2 = \sum_{j=1}^{\infty} \lambda_j^{-1} | \dualp{w}{e_j}|^2$. 
\end{fact}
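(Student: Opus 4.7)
The plan is to mimic the proof of Fact \ref{fact:1} but transferred to $V^*$ via its inner product \eqref{ipV*-id}, namely $\ip{w_1}{w_2}_{V^*}=\dualp{w_1}{\A^{-1}w_2}$. The decisive computational input is that $\A^{-1}e_j=\lambda_j^{-1}e_j$ in $V$. This follows since $Ae_j=\lambda_je_j$ in $H$, so $\A e_j=\lambda_je_j$ in $V^*$ (using that $\A$ extends $A$ and that $H\hookrightarrow V^*$ via \eqref{eq:HV*}); applying $\A^{-1}$ gives the claim.

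First I would verify orthonormality of $(\sqrt{\lambda_j}e_j)$ in $V^*$: by \eqref{ipV*-id} and the identity $\A^{-1}e_k=\lambda_k^{-1}e_k$,
\begin{equation*}
\ip{\sqrt{\lambda_j}e_j}{\sqrt{\lambda_k}e_k}_{V^*}
=\sqrt{\lambda_j\lambda_k}\,\dualp{e_j}{\A^{-1}e_k}
=\sqrt{\lambda_j\lambda_k}\,\lambda_k^{-1}\ip{e_j}{e_k}=\delta_{jk}.
\end{equation*}

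Next I would show totality of $(\sqrt{\lambda_j}e_j)$ in $V^*$. Suppose $w\in V^*$ satisfies $\ip{w}{\sqrt{\lambda_j}e_j}_{V^*}=0$ for every $j$. A direct calculation gives
\begin{equation*}
\ip{w}{\sqrt{\lambda_j}e_j}_{V^*}=\sqrt{\lambda_j}\,\dualp{w}{\A^{-1}e_j}=\lambda_j^{-1/2}\dualp{w}{e_j},
\end{equation*}
so $\dualp{w}{e_j}=0$ for all $j$. Since $\Span(e_j)$ is dense in $V$ by Fact \ref{fact:1} and $\dualp{w}{\cdot}$ is continuous on $V$, we conclude $w=0$, establishing that $(\sqrt{\lambda_j}e_j)$ is an orthonormal basis of $V^*$.

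Finally, Parseval's identity for this basis yields
\begin{equation*}
w=\sum_{j=1}^\infty\ip{w}{\sqrt{\lambda_j}e_j}_{V^*}\sqrt{\lambda_j}e_j
=\sum_{j=1}^\infty\lambda_j^{-1/2}\dualp{w}{e_j}\cdot\sqrt{\lambda_j}e_j
=\sum_{j=1}^\infty\dualp{w}{e_j}\,e_j,
\end{equation*}
with convergence in $V^*$, and
\begin{equation*}
\vvvert w\vvvert_{*}^2=\sum_{j=1}^\infty|\ip{w}{\sqrt{\lambda_j}e_j}_{V^*}|^2=\sum_{j=1}^\infty\lambda_j^{-1}|\dualp{w}{e_j}|^2.
\end{equation*}
The only delicate point is the very first identification $\A^{-1}e_j=\lambda_j^{-1}e_j$, which mixes the actions of $A$ on $H$ and of $\A$ on $V^*$; once that is in place, the argument is a direct transcription of the Hilbert-space basis expansion into the $V^*$-inner product.
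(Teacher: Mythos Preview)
Your proof is correct and follows essentially the same route as the paper's: verify orthonormality of $(\sqrt{\lambda_j}e_j)$ via $\ip{w_1}{w_2}_{V^*}=\dualp{w_1}{\A^{-1}w_2}$ and $\A^{-1}e_j=\lambda_j^{-1}e_j$, then use the orthonormal basis expansion. The only minor difference is in the totality step: you argue that $\dualp{w}{e_j}=0$ for all $j$ forces $w=0$ via density of $\Span(e_j)$ in $V$ (from Fact~\ref{fact:1}), whereas the paper rewrites $\ip{e_j}{w}_{V^*}=\dualp{e_j}{\A^{-1}w}=\ip{e_j}{\A^{-1}w}$ in $H$ and concludes $\A^{-1}w=0$ from the orthonormal basis property of $(e_j)$ in $H$, hence $w=0$ by injectivity of $\A^{-1}$.
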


\begin{proof}
$(\sqrt{\lambda_j} e_j)$ is orthonormal as
$\ip{e_j}{e_k}_{V^*} = \dualp{e_j}{\A^{-1}e_k} = \lambda_k^{-1} \ip{e_j}{e_k}$;
and if $w \in V^*$ {for all $j$} fulfils $0 = \dualp{e_j}{\A^{-1} w} =\ip{e_j}{\A^{-1} w}$ , 
then $w = 0$ as $\A^{-1}$ is injective. Therefore  
\begin{align}
   w = \sum_{j=1}^{\infty} \ip{w}{\lambda_j^{1/2} e_j}_{V^*} \lambda_j^{1/2} e_j 
   = \sum_{j=1}^{\infty} \dualp{w}{\A^{-1} e_j} \lambda_j e_j =  \sum_{j=1}^{\infty} \dualp{w}{e_j} e_j,
\end{align}
so the rightmost side converges in $V^*$, and the expression for $\vvvert w \vvvert_{*}^2$ results.
\end{proof}

\subsection{Semigroups}\label{sec:semigroup_theory}
Assuming that the reader is familiar with the theory of semigroups $e^{t\Gen}$, we review a few needed facts in a
setting with a general complex Banach space $B$. 
The books of Pazy~\cite{Paz83}, Tanabe~\cite{Tan79} and Yosida~\cite{Yos80} may serve as general references.

The generator is
$\Gen x=\lim_{t\to0^+}\frac1t(e^{t\Gen}x-x)$, with domain $D(\Gen)$ consisting of the $x\in B$ for which the limit exists.
$\Gen$ is a densely defined, closed linear
operator in $B$ that for certain $\omega \geq 0$ and $M \geq 1$ satisfies 
$\|(\Gen-\lambda)^{-n}\|_{\B(B)}\le M/(\lambda-\omega)^n$ for $\lambda>\omega$, $n\in\N$.

The corresponding semigroup of operators is written $e^{t\Gen}$, it belongs to $\B(B)$ with
\begin{align}  \label{Mo-ineq}
  \|e^{t\Gen}\|_{\B(B)} \leq M e^{\omega t} \quad \text{ for } 0 \leq t < \infty.
\end{align}
Its basic properties are that $e^{t\Gen}e^{s \Gen}=e^{(s+t)\Gen}$ for $s,t\ge0$, $e^{0\Gen}=I$,
$\lim_{t\to0^+}e^{t \Gen}x=x$ for $x\in B$, and the first of these gives at once the range
inclusions 
\begin{equation}
  \label{incl-eq}
  R(e^{(s+t)\Gen})\subset R(e^{t \Gen})\subset B.  
\end{equation}

The following well-known theorem gives a criterion for $\Gen$ to generate an analytic semigroup that is
uniformly bounded, i.e.,  has $\omega=0$. It summarises the most relevant parts of 
Theorems~1.7.7 and 2.5.2 in \cite{Paz83}, and it involves sectors of the form
\begin{equation}\label{eq:Sigma}
  \Sigma :=\left\{ \lambda \in\C \Bigm| |\arg\lambda | < \frac{\pi}{2} + \theta \right\} \cup \left\{ 0 \right\}.
\end{equation}

\begin{Theorem}\label{Pazy-thm}
{If} $\theta \in\,]0,\frac{\pi}{2}[\,$ and $M>0$ are such that the resolvent set $\rho(\Gen) \supseteq
\Sigma$ and
\begin{equation} \label{lIA-est}
\|(\lambda I - \Gen)^{-1} \|_{\B(B)} \leq \frac{M}{|\lambda|}, \quad \text{ for $\lambda\in\Sigma$, $\lambda \neq 0$},
\end{equation}
then $\Gen$ generates an analytic semigroup $e^{z \Gen}$ for
$|\arg z | < \theta$, for which  $\|e^{z\Gen}\|$ is bounded for $|\arg z|\le\theta'<\theta$,
and $e^{t \Gen}$ is differentiable in 
$\B(B)$ for $t>0$ with $(e^{t\Gen})' = \Gen e^{t\Gen}$. Here
\begin{align}\label{AetA-est}
\|\Gen e^{t\Gen}\|_{\B(B)} \leq \frac{c}{t} \quad \text{for } t>0.
\end{align}
\end{Theorem}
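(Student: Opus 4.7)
The plan is to construct the semigroup directly as a Dunford--Riesz contour integral, in the spirit of Hille--Phillips. Fix any $\theta'\in\,]0,\theta[\,$ and let $\Gamma$ be the upward-oriented Hankel contour made of the two rays $\lambda=re^{\pm i(\pi/2+\theta')}$ for $r\ge\rho>0$, joined by the arc $|\lambda|=\rho$ lying in $\Sigma$. I would define, for $|\arg z|<\theta'$,
\begin{equation*}
  e^{z\Gen}:=\frac{1}{2\pi i}\int_\Gamma e^{z\lambda}(\lambda I-\Gen)^{-1}\,d\lambda.
\end{equation*}
Along the rays, $\Re(z\lambda)\le-r|z|\sin(\theta'-|\arg z|)$ decays exponentially in $r$, and combined with $\|(\lambda I-\Gen)^{-1}\|\le M/|\lambda|$ from \eqref{lIA-est} this yields absolute convergence in $\B(B)$; the arc contributes a bounded term. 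Hence $e^{z\Gen}$ is a well-defined element of $\B(B)$ throughout the open sector $|\arg z|<\theta$, since $\theta'<\theta$ can be chosen arbitrarily close to $\theta$.

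To obtain boundedness $\|e^{z\Gen}\|\le C$ uniformly for $|\arg z|\le\theta''<\theta'$, I would substitute $\mu=z\lambda$, turning $\Gamma$ into a fixed rotated/scaled Hankel contour $\Gamma_z$; the resulting estimate $\int_{\Gamma_z}M|e^{\mu}|\,|d\mu|/|\mu|$ is independent of $z$ and bounded by a constant depending only on $\theta''$. Analyticity of $z\mapsto e^{z\Gen}$ then follows by differentiation under the integral. For the semigroup identity $e^{(s+t)\Gen}=e^{s\Gen}e^{t\Gen}$, I would write the product as a double integral over two Hankel contours $\Gamma$ and $\Gamma'$, with $\Gamma'$ placed strictly to the right of $\Gamma$, apply the resolvent identity $(\lambda-\Gen)^{-1}(\mu-\Gen)^{-1}=((\lambda-\Gen)^{-1}-(\mu-\Gen)^{-1})/(\mu-\lambda)$, and interchange the order of integration by Fubini. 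The two inner integrals are then evaluated by Cauchy's theorem: one of them vanishes, as its contour can be closed on the pole-free side, while the other produces a residue that reassembles precisely into the Dunford integral representing $e^{(s+t)\Gen}$.

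For differentiability in $\B(B)$ at $t>0$, I would differentiate under the integral to get $(e^{t\Gen})'=\frac{1}{2\pi i}\int_\Gamma\lambda e^{t\lambda}(\lambda-\Gen)^{-1}\,d\lambda$; rewriting $\lambda(\lambda-\Gen)^{-1}=I+\Gen(\lambda-\Gen)^{-1}$ and using that $\int_\Gamma e^{t\lambda}\,d\lambda=0$ by contour deformation shows this equals $\Gen e^{t\Gen}$, and the estimate \eqref{AetA-est} drops out of the rescaling $\mu=t\lambda$ together with $|\lambda|\|(\lambda-\Gen)^{-1}\|\le M$. To close the argument that $\Gen$ generates $e^{t\Gen}$ I would verify strong continuity $e^{t\Gen}x\to x$ as $t\to 0^+$: for $x\in D(\Gen)$ this follows from $e^{t\Gen}x-x=\int_0^t e^{s\Gen}\Gen x\,ds$ together with the uniform bound on $\|e^{s\Gen}\|$, and the general case follows by density of $D(\Gen)$ in $B$. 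The main obstacle I anticipate is the semigroup identity: the double-contour manipulation must place the two contours with the correct relative position so that the resolvent identity yields the intended residues, and the use of Fubini in the merely conditionally convergent regime requires careful bookkeeping.
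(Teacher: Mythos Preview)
The paper does not prove this theorem at all: it is stated as a well-known result, with the sentence preceding it explicitly saying that it ``summarises the most relevant parts of Theorems~1.7.7 and 2.5.2 in \cite{Paz83}''. So there is no proof in the paper to compare against.

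Your outline is essentially the classical Dunford--Riesz construction, which is precisely how Pazy proves it. The contour definition, the rescaling $\mu=z\lambda$ for uniform bounds, the double-contour argument with the resolvent identity for the semigroup law, and the rewriting $\lambda(\lambda-\Gen)^{-1}=I+\Gen(\lambda-\Gen)^{-1}$ for the derivative are all standard and correct. Two points deserve tightening. First, your strong-continuity argument is circular as written: the identity $e^{t\Gen}x-x=\int_0^t e^{s\Gen}\Gen x\,ds$ presupposes $e^{0^+\Gen}x=x$, which is what you want to show. The usual fix is, for $x\in D(\Gen)$, to insert $(\lambda-\Gen)^{-1}x=\lambda^{-1}x+\lambda^{-1}(\lambda-\Gen)^{-1}\Gen x$ into the contour integral; the first piece yields $x$ by Cauchy's theorem and the second tends to $0$ as $t\to0^+$ by dominated convergence. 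Second, you have not verified that the generator of the semigroup you built is $\Gen$ itself rather than a proper extension; this is done by computing the Laplace transform $\int_0^\infty e^{-\lambda t}e^{t\Gen}\,dt=(\lambda-\Gen)^{-1}$ via Fubini and Cauchy, which pins down the resolvent and hence the generator.
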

Furthermore, if $e^{t\Gen}$ is analytic, $u'=\Gen u$, $u(0)=u_0$ is uniquely solved by $u(t)=e^{t\Gen}u_0$ for \emph{every} $u_0\in B$.

\subsubsection{Injectivity}
Often it is crucial to know whether the semigroup $e^{t\Gen}$ consists of \emph{injective} operators.
Injectivity is e.g.\ equivalent to the geometric property that the trajectories of two solutions
$e^{t\Gen}v_0$ and $e^{t\Gen}w_0$ of $u'=\Gen u$  have no point of confluence in $B$ for $v_0\ne
w_0$.

However, the literature seems to have focused on examples with non-invertibility of $e^{t\Gen}$, \mbox{e.g., 
\cite[Ex.~2.2.1]{Paz83}.}
But injectivity always holds in the analytic case, as we now show:

\begin{Proposition}  \label{inj-prop}
When a semigroup $e^{z\Gen}$ on a complex Banach space $B$ is analytic $S\to \B(B)$ in the sector 
$S= \left\{z \in \C \mid | \arg z| < \theta \right\}$ for some
$\theta>0$, then $e^{z\Gen}$ is \emph{injective} for all $z \in S$.
\end{Proposition}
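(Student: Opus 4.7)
The plan is to use the analytic nature of the semigroup on the connected open sector $S$ together with the semigroup property and strong continuity at $0$. Suppose, for contradiction, that $e^{z_0\Gen} x = 0$ for some $z_0 \in S$ and some nonzero $x \in B$. Define the $B$-valued map $f\colon S \to B$ by $f(z) = e^{z\Gen} x$; by hypothesis this is holomorphic in the open sector $S$.

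First I would observe that since $S$ is a convex cone, it is closed under addition, so the set $z_0 + S = \{z_0 + w : w \in S\}$ is a non-empty open subset of $S$. For every $w \in S$ the semigroup identity $e^{(z_0 + w)\Gen} = e^{w\Gen} e^{z_0\Gen}$ (which holds for all $z_0, w \in S$ by analytic continuation from the positive real axis) yields
\begin{equation}
f(z_0 + w) = e^{w\Gen} e^{z_0\Gen} x = e^{w\Gen} \cdot 0 = 0.
\end{equation}
Hence $f$ vanishes on the non-empty open set $z_0 + S$.

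Next I would invoke the identity theorem for Banach-space-valued holomorphic functions on the connected open set $S$: by pairing with an arbitrary functional $\varphi \in B'$, the scalar function $\varphi \circ f$ is holomorphic on $S$ and vanishes on the open set $z_0 + S$, hence identically on $S$ by the classical identity theorem; since $\varphi$ was arbitrary and $B'$ separates points, $f \equiv 0$ on $S$. In particular $f(t) = e^{t\Gen} x = 0$ for every $t > 0$ (noting $t > 0 \in S$ since $\theta > 0$). Letting $t \to 0^+$ and using the strong continuity $e^{t\Gen} x \to x$, which holds in general for semigroups, we conclude $x = 0$, contradicting our assumption.

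The only delicate point, which is not really an obstacle, is the justification of the semigroup property $e^{z_1 \Gen} e^{z_2 \Gen} = e^{(z_1 + z_2)\Gen}$ for arbitrary $z_1, z_2 \in S$; this is standard for analytic semigroups and can be established by fixing $z_1 > 0$ real, noting that $w \mapsto e^{z_1\Gen} e^{w\Gen} - e^{(z_1+w)\Gen}$ is holomorphic on $S$ and vanishes for $w > 0$, hence on $S$ by the identity theorem, and then repeating the argument in the first variable. With this in hand, the proof reduces to the two-line argument above.
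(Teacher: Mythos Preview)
Your proof is correct and follows the same overall architecture as the paper's: define $f(z)=e^{z\Gen}x$, use analyticity together with the identity theorem (reduced to scalar functions by pairing with $\varphi\in B'$) to conclude $f\equiv 0$ on $S$, and then invoke strong continuity at $0$ to get $x=0$. The only genuine difference lies in how vanishing on a determining set is obtained: the paper shows that all Taylor coefficients $f^{(n)}(z_0)=\Gen^n e^{z_0\Gen}u_0$ vanish at the single point $z_0$ (citing the formula for derivatives of analytic semigroups), whereas you use the semigroup law $e^{(z_0+w)\Gen}=e^{w\Gen}e^{z_0\Gen}$ to get $f\equiv 0$ on the open translate $z_0+S$. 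Your route is marginally more elementary in that it avoids the derivative formula, relying only on the defining algebraic property of the semigroup; the paper's route is more local and does not need $z_0+S\subset S$ (which in any case holds since the relevant sector has half-angle below $\pi/2$).
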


\begin{proof}
Let $e^{z_0 \Gen} u_0 = 0$ hold for some $u_0 \in B$, $z_0 \in S$.
The analyticity of $e^{z\Gen}$ in $S$ carries over to the map
$f\colon z \mapsto e^{z\Gen}u_0$, and to $g_v\colon z \mapsto \dual{v}{f(z)}$ for arbitrary
$v$ in the dual space $B'$. 
So $g_v$ has in a ball $B(z_0,r)\subset S$ the Taylor expansion 
\begin{align}
  g_v(z) = \sum_{n=0}^{\infty} \frac{1}{n!} \dual{v}{f^{(n)}(z_0)}(z-z_0)^n.
\end{align}
By the properties of analytic semigroups (cf.~\cite[Lem.~2.4.2]{Paz83}) 
and of $u_0$,
\begin{align}
  f^{(n)}(z_0) = \Gen^n e^{z_0 \Gen}u_0 = 0 \quad \text{ for all } n\geq 0, 
\end{align}
so that $g_v \equiv 0$ holds on $B(z_0,r)$ and consequently on $S$ by unique analytic extension.

Now $f(z_1) \neq 0$ would yield $g_v(z_1) \ne 0$ for a suitable $v$ in $B'$,
hence $f \equiv 0$ on $S$ and
\begin{align}
u_0 = \lim_{t \rightarrow 0} e^{t\Gen} u_0 = \lim_{t \rightarrow 0}f(t) = 0,
\end{align}
since $e^{t\Gen}$ is a strongly continuous semigroup. Altogether $Z(e^{z_0\Gen})=\{0\}$ is proved.
\end{proof}

\begin{Remark} \label{Showinj-rem}
We have only been able to track a claim of the injectivity in Proposition~\ref{inj-prop} in case
$z>0$, $\theta\le \pi/4$ and $B$ is a Hilbert space; cf.\ Showalter's paper \cite{Sho74}. 
But his proof is flawed, as $\Gen^2$ is non-accretive for some $\Gen$ with $\theta\le \pi/4$, cf.\
the counter-example in Remark~\ref{Show-rem} below. 
\end{Remark}

\begin{Remark}\label{rem:bijektiv_onb}
Injectivity also follows directly when $\Gen$ is defined on a Hilbert space $H$ having
an orthonormal basis $(e_n)_{n\in\N}$ such that $\Gen e_j = \lambda_j e_j$:
Clearly $e^{t\Gen}e_j= e^{t \lambda_j}e_j$ as both sides satisfy
$x' -\Gen x = 0$, $x(0) = e_j$.
So if $e^{t\Gen} v = 0$,  boundedness of $e^{t \Gen}$ gives
\begin{align}
   0 = e^{t\Gen} v  = \sum \ip{v}{e_j} e^{t \Gen} e_j  = \sum \ip{v}{e_j} e^{t\lambda_j} e_j,
\end{align}
so that $v\perp e_j$ for all $j$, and thus $v\in\Span(e_n)^\perp=H^\perp=\{0\}$. Hence $e^{t\Gen}$ is invertible for such $\Gen$.
\end{Remark}

We have chosen to use the symbol $e^{-t\Gen}$ to denote the inverse of the \emph{analytic} semigroup 
$e^{t\Gen}$ generated by $\Gen$, consistent with the case in which $e^{t\Gen}$ does form a group in
$\mathbb{B}(B)$, i.e., 
\begin{equation} \label{eq:inverse}
  e^{-t\Gen} := (e^{t\Gen})^{-1} \qquad\text{for all $t\in\R$}.
\end{equation} 
This notation is convenient for our purposes (with some diligence).

For simplicity we observe the following when $B=H$ is a Hilbert space and $t>0$:
clearly $e^{-t\Gen}$ maps $D(e^{-t\Gen})= R(e^{t\Gen})$ bijectively onto $H$, and
it is an unbounded closed operator in $H$.
As $(e^{t\Gen})^*=e^{t\Gen^*}$ also is analytic, so that $Z(e^{t\Gen^*})=\{0\}$ by Proposition~\ref{inj-prop}, we have
$\overline{D(e^{-t\Gen})}=H$, i.e., the domain is dense in $H$.

A partial group phenomenon and other algebraic properties are collected here:

\begin{Proposition}  \label{inverse-prop}
The inverses $e^{-t\Gen}$ in \eqref{eq:inverse} form a semigroup of unbounded operators,
\begin{equation}  \label{sg-id}
  e^{-t\Gen}e^{-s\Gen}= e^{-(t+s)\Gen} \qquad \text{for $t, s\ge0$}.
\end{equation}
This extends to {$(s,t)\in\,]-\infty,0]\times \R$,} but the right-hand side may be unbounded for $t+s>0$. 

Moreover, as unbounded operators the $e^{-t\Gen}$ commute with $e^{s \Gen}\in \B(H)$, i.e.,
\begin{equation}
  \label{comm-eq}
  e^{s \Gen}e^{-t\Gen}\subset e^{-t\Gen}e^{s\Gen} \qquad \text{for $t,s\ge0$},
\end{equation}
and there is a descending chain of domain inclusions
\begin{align}  \label{eq:domain_inclusion}
  D(e^{-t'\Gen}) \subset D(e^{-t\Gen}) \subset H \qquad\text{for $0<t<t'$}.
\end{align} 
\end{Proposition}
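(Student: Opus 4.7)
My plan is to exploit Proposition~\ref{inj-prop}, which ensures that each $e^{t\Gen}$, $t\ge 0$, is an injection of $H$ onto $R(e^{t\Gen})$, so that $e^{-t\Gen}$ is literally its set-theoretic inverse with $D(e^{-t\Gen})=R(e^{t\Gen})$. All four assertions are then consequences of manipulating the classical identity $e^{t\Gen}e^{s\Gen}=e^{(t+s)\Gen}$, $s,t\ge 0$, on $H$, combined with this range characterisation of the domains.

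For the semigroup identity \eqref{sg-id} at non-negative parameters I would argue by double inclusion of graphs. The forward direction: if $x\in D(e^{-t\Gen}e^{-s\Gen})$, then $x=e^{s\Gen}y$ with $y=e^{t\Gen}z$, whence $x=e^{(t+s)\Gen}z$ lies in $R(e^{(t+s)\Gen})=D(e^{-(t+s)\Gen})$ and $e^{-(t+s)\Gen}x=z=e^{-t\Gen}e^{-s\Gen}x$. The reverse direction just factors any $x\in R(e^{(t+s)\Gen})$ as $e^{s\Gen}(e^{t\Gen}z)$ and traces the computation backwards. The descending chain \eqref{eq:domain_inclusion} falls out at once from $e^{t'\Gen}=e^{t\Gen}e^{(t'-t)\Gen}$, giving $R(e^{t'\Gen})\subset R(e^{t\Gen})\subset H$. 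For the commutation \eqref{comm-eq}, I would take $x=e^{t\Gen}y\in D(e^{-t\Gen})=D(e^{s\Gen}e^{-t\Gen})$ and use $e^{s\Gen}e^{t\Gen}=e^{t\Gen}e^{s\Gen}$ on $H$ to conclude that $e^{s\Gen}x\in R(e^{t\Gen})$ and that $e^{-t\Gen}e^{s\Gen}x=e^{s\Gen}e^{-t\Gen}x$; the inclusion may well be strict, because vectors $x\notin R(e^{t\Gen})$ can still have $e^{s\Gen}x\in R(e^{t\Gen})$, and such $x$ enlarge $D(e^{-t\Gen}e^{s\Gen})$.

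The extension of \eqref{sg-id} to $(s,t)\in(-\infty,0]\times\R$ is where the most bookkeeping is required, and is what I expect to be the main obstacle. For $s\le 0$ one has $e^{-s\Gen}=e^{|s|\Gen}\in\B(H)$, so I would split into the configurations $t\le 0$, and $t>0$ with $t+s\ge 0$, and $t>0$ with $t+s<0$. In each case the relevant domain condition $e^{-s\Gen}x\in R(e^{t\Gen})$ is analysed by factoring through the $H$-valued semigroup law, after which cancellations of the form $e^{u\Gen}e^{-u\Gen}=I$ on the appropriate range reduce both sides of \eqref{sg-id} to the same operator; when $t+s>0$ the resulting $e^{-(t+s)\Gen}$ is genuinely unbounded on the proper subspace $R(e^{(t+s)\Gen})$. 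The delicate point is to verify in each sign-configuration that the two domains coincide exactly, so that no spurious vectors slip in via the bounded factor. Apart from this case analysis, the proof is essentially a careful graph-theoretic chase and no further estimates from analytic semigroup theory are needed beyond Proposition~\ref{inj-prop}.
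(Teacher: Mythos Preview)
Your proposal is correct and follows essentially the same route as the paper: both arguments reduce everything to the semigroup identity $e^{t\Gen}e^{s\Gen}=e^{(t+s)\Gen}$ on $H$ together with the domain characterisation $D(e^{-t\Gen})=R(e^{t\Gen})$ from Proposition~\ref{inj-prop}, and then chase graphs. The only notable difference is in the presentation of \eqref{sg-id} for $s,t\ge 0$: where you do a two-sided inclusion element by element, the paper observes in one line that $e^{-t\Gen}e^{-s\Gen}e^{(s+t)\Gen}=I_H$ gives $e^{-(s+t)\Gen}\subset e^{-t\Gen}e^{-s\Gen}$, and then notes that an injection (the composite of two inverses) cannot properly extend a surjection, forcing equality --- a slightly more economical argument, but not a different idea.
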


\begin{proof}
When $s,t\ge0$, clearly $e^{-t\Gen}e^{-s\Gen}e^{(s+t)\Gen}=I_H$
holds, so that $e^{-(s+t)\Gen}\subset e^{-t\Gen}e^{-s\Gen}$; but equality necessarily holds, as the
injection $e^{-t\Gen}e^{-s\Gen}$ cannot be a proper extension of the surjection $e^{-(s+t)\Gen}$.
Whence \eqref{sg-id}. 
For $t+s\ge0\ge s$ this yields $e^{-t\Gen}e^{-s \Gen}=e^{-(t+s)\Gen}e^{s \Gen}e^{-s \Gen}=e^{-(t+s)\Gen}$.
The case $-s>t\ge0$ is similar.

Also the commutation follows at once, for the semigroup property gives
\begin{equation}
  e^{s\Gen}e^{-t\Gen}= e^{-t\Gen}e^{t\Gen}e^{s\Gen}e^{-t\Gen} = e^{-t\Gen}e^{(s+t)\Gen}e^{-t\Gen} = e^{-t\Gen}e^{s\Gen}I_{R(e^{t\Gen})},
\end{equation}
where the right-hand side is a restriction of $e^{-t\Gen}e^{s \Gen}$.
Finally \eqref{incl-eq} yields \eqref{eq:domain_inclusion}.
\end{proof}

\begin{Remark}
 $D(e^{-t\Gen}e^{s\Gen})=D(e^{-(t-s)\Gen})$ holds in \eqref{comm-eq}, because
\eqref{sg-id} extends to negative $s$ as stated. \mbox{Hence \eqref{comm-eq}} is a strict inclusion if
the the first one in \eqref{eq:domain_inclusion} is so for all $t,t'$.
\end{Remark}

\subsubsection{Some Regularity Properties}
As a preparation we treat a few regularity questions for 
$s\mapsto e^{(t-s)\Gen}f(s)$, where the analytic operator function $E(s)=e^{(t-s)\Gen}$ has 
a singularity at $s=t$; cf.\ \eqref{AetA-est}. This will be controlled when
% $f(s)$ is merely integrable, 
$f\in L_1(0,t;B)$.
 
That $Ef=e^{(t-\cdot)\Gen}f$ also is in $L_1(0,t;B)$ is undoubtedly known.
So let us recall briefly how to prove it strongly measurable, i.e., to find
a sequence of simple functions converging pointwise to $E(s)f(s)$ for a.e.\ $s \in[0,t]$; cf.~\cite{Yos80}.
Now $f$ can be so approximated by a sequence $(f_n)$, and $E$ can by its
{continuity} $[0,t[\,\to \B(B)$ also be approximated pointwise for $s<t$ by $E_n$
defined on each {subinterval} $[t(j-1)2^{-n},tj2^{-n}[\,$, $j=1,\dots,2^n$, as the value of $E$ at 
the left end point. Then $Ef=\lim_n E_nf_n$ on $[0,t]$ a.e.
Therefore $e^{(t-\cdot)\Gen}f \in L_1(0,t;B)$ follows directly from \eqref{Mo-ineq},
\begin{align}  \label{L1-est}
  \|e^{(t-\cdot)\Gen} f \|_{L_1(0,t;B)}\leq \int_0^t \|e^{(t-s)\Gen}\|\|f(s)\|\,ds \leq M e^{\omega t} \|f \|_{L_1(0,t;B)}.
\end{align}
Moreover, $\dualp{\eta}{e^{(t-\cdot)\Gen} f}$ is seen to be in $L_1(0,t)$ 
by majorizing with $\|e^{(t-s)\Gen}f(s)\|_{B} \|\eta\|_{B^*}$, for strong measurability implies weak
measurability; cf.\ Section IV.5 appendix in \cite{ReSi80}.

The main concern is to obtain a Leibniz rule for the derivative:
\begin{equation}  \label{eq:Leibniz_rule}
\partial_s( e^{(T-s)\Gen} w(s)) = (-\Gen)e^{(T-s)\Gen}w(s)+ e^{(T-s)\Gen}\partial_s w(s).
\end{equation}
For $w \in C^1(0,T; B)$ this is unproblematic for $s<T$:
$w(s+h)=w(s)+ h \partial_s w(s) +o(h)$, where $o(h)/h\to 0$ for $h \rightarrow 0$; and 
the operator is differentiable in $\B(B)$ for $s<T$, cf. Theorem \ref{Pazy-thm}, so that
$e^{(T-(s+h))\Gen} = e^{(T-s)\Gen} + h (-\Gen)e^{(T-s)\Gen} +o(h)$. Hence a multiplication of the two expansions
gives the right-hand side of \eqref{eq:Leibniz_rule} to the first order in $h$. 
The Leibniz rule is more generally valid in the vector distribution sense:

\begin{Proposition}  \label{prop:Leibniz_rule}
If $\Gen$ generates an analytic semigroup on a Banach space $B$ and $w \in H^1(0,T; B)$, then
the Leibniz rule \eqref{eq:Leibniz_rule} holds in $\D'(0,T;B)$.
\end{Proposition}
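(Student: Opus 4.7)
The plan is to reduce to the already established pointwise Leibniz rule for $C^1$-functions by approximating $w$ in $H^1(0,T;B)$ by smooth functions and passing to the limit against a fixed, compactly supported test function; the latter sidesteps the singularity of $\Gen e^{(T-s)\Gen}$ at $s=T$.

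First, I would fix an arbitrary $\varphi \in C_c^\infty(0,T)$ and choose $0<a<b<T$ with $\supp\varphi\subset[a,b]$. On $[a,b]$, estimates \eqref{Mo-ineq} and \eqref{AetA-est} give uniform bounds
\begin{equation*}
\|e^{(T-s)\Gen}\|_{\B(B)} \le M e^{\omega(T-a)}, \qquad \|\Gen e^{(T-s)\Gen}\|_{\B(B)} \le \tfrac{c}{T-b},
\end{equation*}
and by the strong measurability argument sketched before \eqref{L1-est}, both $s\mapsto \Gen e^{(T-s)\Gen}w(s)$ and $s\mapsto e^{(T-s)\Gen}w'(s)$ are strongly measurable $[a,b]\to B$. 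Together with $w,w'\in L_2(0,T;B)\subset L_1(a,b;B)$, this makes them elements of $L_1(a,b;B)$ and, in particular, legitimate vector distributions on $(0,T)$ when paired with functions supported in $[a,b]$.

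Second, by the standard density of $C^\infty([0,T];B)$ in $H^1(0,T;B)$, I would select $w_n \in C^\infty([0,T];B)$ with $w_n\to w$ and $w_n'\to w'$ in $L_2(0,T;B)$. For each $w_n$ the pointwise Leibniz rule \eqref{eq:Leibniz_rule} holds on $[0,T)$ as noted in the text preceding the proposition. Pairing with $\varphi$ and integrating by parts yields
\begin{equation*}
-\int_0^T e^{(T-s)\Gen}w_n(s)\,\varphi'(s)\,ds
= \int_0^T\!\bigl(-\Gen e^{(T-s)\Gen}w_n(s)+e^{(T-s)\Gen}w_n'(s)\bigr)\varphi(s)\,ds.
\end{equation*}

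Third, I would pass to the limit $n\to\infty$. The uniform operator norm bounds on $[a,b]$ combined with $w_n\to w$, $w_n'\to w'$ in $L_1(a,b;B)$ make each integrand converge in $L_1(a,b;B)$, hence each integral converges to its analogue with $w$ in place of $w_n$. Since $\varphi\in C_c^\infty(0,T)$ was arbitrary, this is precisely the statement that \eqref{eq:Leibniz_rule} holds in $\D'(0,T;B)$. The main obstacle is the blow-up $\|\Gen e^{(T-s)\Gen}\|\sim(T-s)^{-1}$ as $s\nearrow T$, which would obstruct any naive global estimate; this is neutralised by working against compactly supported $\varphi$, which confines all computations to a subinterval $[a,b]\subset[0,T)$ on which the semigroup and its derivative are uniformly bounded.
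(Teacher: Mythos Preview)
Your proof is correct and follows essentially the same route as the paper: approximate $w$ by smooth functions, invoke the pointwise $C^1$ Leibniz rule for the approximants, and pass to the limit against an arbitrary test function, using that compact support in $(0,T)$ keeps you away from the singularity of $\Gen e^{(T-s)\Gen}$ at $s=T$. The only cosmetic differences are that the paper first reduces to the uniformly bounded case $\omega=0$ before invoking \eqref{AetA-est}, and that it claims only $w_k'\to w'$ in $L_{2,\mathrm{loc}}$ (your stronger $L_2$ convergence is of course also available and simplifies the write-up slightly).
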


\begin{proof} 
It suffices to cover the case $\omega=0$, for the other cases then follow by applying the formula
to the semigroup $e^{-\omega t}e^{t\Gen}$ generated by $\Gen-\omega I$.
For $w \in H^1(0,T;B)$ the standard convolution procedure gives a sequence $(w_k)$ in 
$C^{1}([0,T]; B)$ such that 
\begin{equation}
  w_k \rightarrow w  \quad\text{in}\quad L_2(0,T;B),\qquad
  w'_k \rightarrow w' \quad\text{in}\quad L_{2, \text{loc}}(0,T; B). 
\end{equation}
For arbitrary $\phi \in C_0^{\infty}(\,]0,T[\,)$, we find using the Bochner inequality  that 
\begin{align}  \label{eq:semigroupLeibniz}
  \| \int_0^T e^{(T-s)\Gen} (w(s)-w_k(s)) \phi(s) \,ds \|_{B}  \leq C \|w(s)-w_k(s)\|_{L_2(0,T;B)}, 
\end{align}
with $C= M (\int_{\supp \phi} |\phi(s)|^2 \,ds)^{1/2}$, where $M$ is the constant
in \eqref{Mo-ineq}. 

Hence $e^{(T-s)\Gen}w_k \rightarrow e^{(T-s)\Gen} w$ in $\D'(0,T; B)$, so via the
$C^1$-case above, as $\partial_s$ is continuous in $\cal D'$, we get
\begin{equation}
  \begin{split}
 \partial_s(e^{(T-s)\Gen}w) &= \lim_{k \rightarrow \infty}( \partial_s(e^{(T-s)\Gen}w_k))   
\\
 &=\lim_{k \rightarrow \infty}( (-\Gen)e^{(T-s)\Gen}w_k) + \lim_{k \rightarrow
   \infty}(e^{(T-s)\Gen}\partial_s w_k)
 = (-\Gen)e^{(T-s)\Gen}w +  e^{(T-s)\Gen}\partial_s w.
  \end{split}
\end{equation}
Indeed, the last limits exist in $\D'(0,T;B)$ by the choice of $w_k$, for if
$\epsilon>0$ is small enough,
\begin{gather} 
  \| \int_{\supp \phi} e^{(T-s)\Gen} (w'(s)-w'_k(s)) \phi(s) \,ds \|_{B} \leq c
  \int_{\varepsilon}^{T-\varepsilon}\|w'(s)-w_k'(s)\|_{B}\,ds,
\\
  \| \int_0^T (-\Gen) e^{(T-s)\Gen} (w(s)-w_k(s)) \phi(s) \,ds \|_{B}  \leq  \tilde{C}
  \|w-w_k\|_{L_2(0,T; B)}
\end{gather}
with $\tilde{C} = (\int_{\supp \phi} \big|\frac{c \phi(s)}{T-s} \big|^2 \,ds)^{1/2}$,
using the bound on $(-\Gen)e^{(T-s)\Gen}$ in Theorem~\ref{Pazy-thm}.
\end{proof}

\section{Functional Analysis of Initial Value Problems}\label{ivp-sect}

Having set the scene  in Section~\ref{LaxM-ssect} by recalling elliptic Lax--Milgram operators $\A$ in Gelfand triples
$(V,H,V^*)$, 
we now discuss solutions of the classical initial value problem
\begin{equation}  \label{ivp-id}
\left\{
\begin{aligned}
  \partial_t u + \A u &= f   && \text{in } \D'(0,T;V^*) \\
  u(0)      &= u_0 && \text{in } H.
\end{aligned}
\right .
\end{equation}
By definition of vector distributions, the above equation means that for every
scalar test function $\varphi\in C_0^\infty(]0,T[)$ one has
$\dual{u}{-\varphi'}+\dual{\A u}{\varphi}=\dual{f}{\varphi}$ as an identity in $V^*$.

First we recall the fundamental theorem for vector functions from \cite[Lem.~III.1.1]{Tm}.
Further below, it will be crucial for obtaining a solution formula for \eqref{ivp-id}.

\begin{Lemma}\label{lem:Temam}
For a Banach space $B$ and $u, g \in L_1(a,b;B)$ the following are equivalent: 
\begin{itemize}
  \item[\rm{(i)}] $u$ is a.e. equal to a primitive function of $g$, i.e.\ for some vector $\xi\in B$
    \begin{equation}%\label{eq:temam1}
       u(t) = \xi + \int_a^t g(s) \,ds \quad \text{for a.e.~ $t \in [a,b]$}.
     \end{equation}
   \item[\rm{(ii)}] For each test function $\phi \in C_0^{\infty}(]a,b[)$ one has
       $\int_a^b u(t) \phi'(t) \,dt = -\int_a^b g(t)\phi(t) \,dt$. 
  \item[\rm{(iii)}] For each $\eta$ in the dual space $B'$, 
            $\frac{d}{dt} \dualp{\eta}{u} = \dualp{\eta}{g}$ holds in $\D'(a,b)$.
\end{itemize}
In the affirmative case, $u'=g$ as vector distributions in $\D'(a,b;B)$ by \upn{(ii)}, the
right-hand side in \upn{(i)} is a continuous representative of $u$ such that $\xi = u(a)$ and
\begin{align}\label{sbe11-eq}
  \sup_{a \leq t \leq b} \|u(t)\|_B \leq (b-a)^{-1} \|u\|_{L_1(a,b;B)} + \|g\|_{L_1(a,b;B)}.
\end{align}
\end{Lemma}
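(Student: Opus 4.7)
The natural strategy is the circular chain \textup{(i)} $\Rightarrow$ \textup{(ii)} $\Rightarrow$ \textup{(iii)} $\Rightarrow$ \textup{(i)}. For \textup{(i)} $\Rightarrow$ \textup{(ii)}, I would substitute $u(t)=\xi+\int_a^t g(s)\,ds$ into the left-hand side; the $\xi$-term drops because $\int_a^b\phi'(t)\,dt=0$, and Fubini applied to the Bochner double integral over $\{a\le s\le t\le b\}$ transforms the rest into $\int_a^b g(s)(\phi(b)-\phi(s))\,ds=-\int_a^b g(s)\phi(s)\,ds$. The step \textup{(ii)} $\Rightarrow$ \textup{(iii)} is immediate: pair both sides with an arbitrary $\eta\in B'$ and push $\eta$ through the Bochner integrals via the defining identity $\dualp{\eta}{\int_a^b f(t)\,dt}=\int_a^b\dualp{\eta}{f(t)}\,dt$ valid for $f\in L_1(a,b;B)$.

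For \textup{(iii)} $\Rightarrow$ \textup{(i)} I would first recover \textup{(ii)}: unfolding \textup{(iii)} by testing with $\phi\in C_0^\infty(]a,b[)$ gives $\dualp{\eta}{\int_a^b u\phi'\,dt+\int_a^b g\phi\,dt}=0$ for every $\eta\in B'$, so Hahn--Banach forces the bracketed vector in $B$ to vanish. Next set $G(t):=\int_a^t g(s)\,ds$, which is continuous $[a,b]\to B$ by absolute continuity of the Bochner integral, and apply the already established \textup{(i)} $\Rightarrow$ \textup{(ii)} to $G$ (with constant $0$). Subtracting, the function $v:=u-G\in L_1(a,b;B)$ satisfies
\[
   \int_a^b v(t)\phi'(t)\,dt=0 \qquad\text{for every } \phi\in C_0^\infty(]a,b[).
\]

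The heart of the matter is a vector-valued du Bois--Reymond lemma: such a $v$ must agree a.e.\ with a constant $\xi\in B$. I would fix $\phi_0\in C_0^\infty(]a,b[)$ with $\int\phi_0\,dt=1$, put $\xi:=\int_a^b v(t)\phi_0(t)\,dt\in B$, and for a given $\phi\in C_0^\infty(]a,b[)$ form the antiderivative $\psi(t):=\int_a^t\bigl(\phi(s)-(\int_a^b\phi\,dr)\phi_0(s)\bigr)\,ds$; this $\psi$ lies in $C_0^\infty(]a,b[)$ as it vanishes near both endpoints, and inserting $\psi'$ into the displayed identity yields $\int_a^b(v(t)-\xi)\phi(t)\,dt=0$. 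Testing with $\eta\in B'$ turns this into a scalar du Bois--Reymond statement, so $\dualp{\eta}{v(\cdot)-\xi}=0$ a.e.\ for each $\eta$. The main obstacle is collapsing these $\eta$-dependent null sets to a single one: strong measurability of $v$ confines its essential range to a separable subspace $B_0\subset B$, and a countable norming subset of $B'$ for $B_0$ provides one common null set outside of which $v\equiv\xi$, giving \textup{(i)}.

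For the affirmative case, \textup{(ii)} is by definition the identity $u'=g$ in $\D'(a,b;B)$, and the continuous primitive $\tilde u(t):=\xi+\int_a^tg(s)\,ds$ is the stated representative with $\tilde u(a)=\xi$. For the bound \eqref{sbe11-eq}, I would use $\tilde u(t)-\tilde u(s)=\int_s^t g(r)\,dr$, whence $\|\tilde u(t)\|_B\le\|\tilde u(s)\|_B+\|g\|_{L_1(a,b;B)}$; averaging in $s$ over $[a,b]$ gives $(b-a)\|\tilde u(t)\|_B\le\|u\|_{L_1(a,b;B)}+(b-a)\|g\|_{L_1(a,b;B)}$, which is precisely \eqref{sbe11-eq}.
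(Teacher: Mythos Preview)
Your argument is correct. The paper itself does not prove the equivalence of (i)--(iii) but defers to Temam's book \cite{Tm}; your circular chain with Fubini, Hahn--Banach, and the vector-valued du~Bois--Reymond argument (including the separability step to collapse the $\eta$-dependent null sets) is the standard route and supplies what the paper only cites. For the estimate \eqref{sbe11-eq}, your proof and the paper's coincide: the paper picks the minimum point $t_0$ of $\|u(\cdot)\|_B$, writes $\|u(t)\|_B\le\|u(t_0)\|_B+\int_a^b\|g\|_B\,ds$, and then bounds $\|u(t_0)\|_B$ by the average $(b-a)^{-1}\|u\|_{L_1}$ via the Mean Value Theorem---which is exactly what your averaging in $s$ accomplishes in one step.
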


\begin{Remark}  \label{rem:Temam}
Lemma \ref{lem:Temam} is proved in \cite{Tm}, except for the estimate \eqref{sbe11-eq}:
the continuous function $\|u(t)\|_B$ attains its minimum at some $t_0\in[a,b]$, so applying the
Bochner inequality in \mbox{(i)} and the Mean Value Theorem, 
\begin{align}
   \|u(t)\|_B \leq \|u(t_0)\|_B + |\int_{t_0}^t \|g(t)\|_B \,dt|
  \leq \frac1{b-a}\int_a^b \|u(t)\|_B \,dt+\int_a^b \|g(t)\|_B \,dt. 
\end{align}
This yields \eqref{sbe11-eq}, 
hence the Sobolev embedding $W^{1,1}(a,b;B)\hookrightarrow C([a,b];B)$.
If furthermore $u,g \in L_2(a,b;B)$, we get the Sobolev embedding
$H^1(a,b;B)\hookrightarrow C([a,b];B)$ similarly,
\begin{align}  \label{eq:addendumII}
  \sup_{a \leq t \leq b} \|u(t)\|_B \leq 
  (b-a)^{-1/2} \|u\|_{L_2(a,b;B)} + (b-a)^{1/2} \|g\|_{L_2(a,b;B)} \leq c \|u\|_{H^1(a,b;B)}.
\end{align}
\end{Remark}

Secondly we recall the Leibniz rule 
$\frac{d}{dt} \ip{f(t)}{g(t)} = \ip{f'(t)}{g(t)} + \ip{f(t)}{g'(t)}$ valid for $f, g \in C^1([0,T]; H)$.
The well-known generalization below was proved  in real vector spaces in
\cite[Lem.~III.1.2]{Tm} for $u =v$. We briefly extend this to the general
complex case, which we mainly use to obtain that $\partial_t |u|^2= 2\Re\dual{u'}{u}$, though also
$u\ne v$ will be needed.

\begin{Lemma}  \label{lem:Leibniz_dual}
If $u, v \in L_2(0,T ; V) \cap H^1(0,T; V^*)$, then $t \mapsto \ip{u(t)}{v(t)}$ 
is in $L_1(0,T)$ and 
\begin{equation}
  \frac{d}{dt} \ip{u}{v} = \dualp{u'}{v} + \overline{\dualp{v'}{u}} \quad\text{ in $\cal D'(0,T)$}.
\end{equation}
Furthermore, $u$ and $v$ have continuous representatives on $[0,T]$, i.e., $u,v\in C([0,T];H)$.
\end{Lemma}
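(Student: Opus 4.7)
The plan is to derive the Leibniz identity by mollification and then extract $H$-continuity from the resulting expression for $\partial_t|u|^2$. Integrability of $t\mapsto \ip{u(t)}{v(t)}$ is immediate: the continuous inclusion $V\hookrightarrow H$ places $u,v$ in $L_2(0,T;H)$, so Cauchy--Schwarz in $H$ gives $|\ip{u(t)}{v(t)}| \leq |u(t)|\,|v(t)| \in L_1(0,T)$.

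For the distributional Leibniz identity it suffices to verify it on each subinterval $[a,b]\subset\,]0,T[\,$ containing the support of a given test function $\phi\in C_0^\infty(]0,T[)$. After extending $u,v$ suitably to $\R$, the standard mollification procedure would produce sequences $u_k,v_k\in C^1([a,b];V)$ with $u_k\to u$ and $v_k\to v$ both in $L_2(a,b;V)$ and in $H^1(a,b;V^*)$. For such smooth functions the pointwise product rule in the complex Hilbert space $H$ reads
\[
\frac{d}{dt}\ip{u_k}{v_k} = \ip{u_k'}{v_k} + \ip{u_k}{v_k'},
\]
and since $u_k'(t),v_k'(t)\in V\subset H$, the identification \eqref{eq:HV*} together with conjugate symmetry rewrites the right-hand side as $\dualp{u_k'}{v_k} + \overline{\dualp{v_k'}{u_k}}$. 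The bilinear maps $(f,w)\mapsto \dualp{f}{w}$ and $(h_1,h_2)\mapsto \ip{h_1}{h_2}$ are continuous from $L_2(0,T;V^*)\times L_2(0,T;V)$ and $L_2(0,T;H)^2$, respectively, into $L_1(0,T)$, so both sides converge in $L_1(0,T)$ and hence in $\cal D'(0,T)$. Since $\partial_t$ is continuous on $\cal D'$, the stated identity follows.

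To obtain the continuous representatives in $H$, I would specialise to $v=u$, giving $\partial_t|u|^2 = 2\Re\dualp{u'}{u}\in L_1(0,T)$ because $u'\in L_2(V^*)$ and $u\in L_2(V)$. Hence $|u(\cdot)|^2$ is absolutely continuous on $[0,T]$, so $t\mapsto |u(t)|$ is bounded and continuous. Lemma~\ref{lem:Temam} already furnishes a continuous representative $u\colon[0,T]\to V^*$. For $t_n\to t_0$ the vectors $u(t_n)$ are bounded in $H$ and converge in $V^*$, so any weak $H$-limit of a subsequence must equal $u(t_0)$; the full sequence therefore satisfies $u(t_n)\rightharpoonup u(t_0)$ in $H$, and combined with $|u(t_n)|\to|u(t_0)|$ this upgrades to strong convergence in $H$. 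The same argument applies to $v$. The main technical obstacle lies in the approximation/limit step and in keeping the sesquilinear bookkeeping straight so that the conjugate ends up on precisely one of the two terms; the rest is routine.
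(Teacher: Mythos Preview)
Your approach to the Leibniz identity is essentially the paper's: mollify, use the $C^1$ product rule, and pass to the limit via continuity of the pairings $L_2(0,T;V^*)\times L_2(0,T;V)\to L_1(0,T)$ and of $\partial_t$ in $\cal D'(0,T)$. The only cosmetic difference is that the paper makes do with $L_{2,\mathrm{loc}}$-convergence of the derivatives on $\,]0,T[\,$, while you arrange $L_2$-convergence on each compact $[a,b]$.

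For the $H$-continuity, however, your weak-plus-norm argument has a genuine (if standard) gap. Writing $\phi$ for the continuous representative of $|u|^2$ and choosing $t_n$ from the full-measure set where $u(t_n)\in V$, you correctly get $u(t_n)\rightharpoonup u(t_0)$ in $H$ and $|u(t_n)|=\phi(t_n)^{1/2}\to\phi(t_0)^{1/2}$; but weak lower semicontinuity only yields $|u(t_0)|\le\phi(t_0)^{1/2}$, and the reverse inequality is nowhere established. Thus the premise ``$|u(t_n)|\to|u(t_0)|$'' that you feed into the weak$+$norm$\Rightarrow$strong lemma is unjustified, and lower semicontinuity of $t\mapsto|u(t)|^2$ together with $|u|^2=\phi$ a.e.\ is not enough to force equality everywhere. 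The paper sidesteps this by citing Lemma~III.1.4 in \cite{Tm}; the fix uses your own mollifiers: apply the scalar identity to $u_m-u_k$, average over the base point, and conclude $\sup_t|u_m(t)-u_k(t)|_H^2\le (b-a)^{-1}\|u_m-u_k\|_{L_2(H)}^2+2\|u_m'-u_k'\|_{L_2(V^*)}\|u_m-u_k\|_{L_2(V)}\to0$, so $(u_m)$ is Cauchy in $C([a,b];H)$ and its limit is the continuous $H$-valued representative (then treat the endpoints by first extending $u$ beyond $[0,T]$).
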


\begin{proof}
Let $u, v \in L_2(0,T ; V)$ with distributional derivatives $u', v' \in L_2(0,T ; V^*)$.  
As in the proof of Proposition~\ref{prop:Leibniz_rule} we obtain $u_m\in C^\infty([0,T];V)$ such that 
$u_m\rightarrow u$ in $L_2(0,T;V)$ whilst $u_m' \rightarrow u'$ in
$L_{2,\text{loc}}(0,T;V^*)$. Similarily $v$ gives rise to $v_m$. 

By continuity of inner products, $t\mapsto\ip{u}{v}$ is measurable on $[0,T]$ for $u, v \in L_2(0,T;V)$, 
and $\int_0^T|\ip{u}{v} |\,dt < \infty$. Sesquilinearity yields $\ip{u_m}{v_m} \rightarrow \ip{u}{v}$ in
$L_1(0,T)$ for $m\to\infty$, while both $\dualp{u'_m}{v_m} \rightarrow \dualp{u'}{v}$ and
$\dualp{v'_m}{u_m} \rightarrow \dualp{v'}{u}$ hold in $L_{2,\text{loc}}(0,T)$, hence in $\D'(0,T)$.

As differentiation is continuous in $\D'(0,T)$, one finds from the $C^1$-case and \eqref{eq:HV*} that
\begin{align}
  \frac{d}{dt} \ip{u}{v} =\lim_m\frac{d}{dt} \ip{u_m}{v_m} = 
  \lim_m\ip{u'_m}{v_m} +\lim_m\overline{\ip{v'_m}{u_m}}
  = \dualp{u'}{v} + \overline{\dualp{v'}{u}}.
\end{align}
Taking $v=u$ the function $t\mapsto |u(t)|^2$ is seen to be in $W^{1,1}(0,T)\subset
C([0,T])$, and since any $u\in H^1(0,T;V^*)$ is continuous in $V^*$ by Remark~\ref{rem:Temam}, one can 
also here obtain from Lemma~III.1.4 in \cite{Tm} that $u\colon [0,T]\to H$ is continuous. Similarly for $v$.
\end{proof}

\subsection{Existence and Uniqueness}
  \label{sec:solvability_of_backward_heateq_existence_and_uniqueness_of_a_soln}
In our presentation the following result is a cornerstone, relying on the full framework in 
Section~\ref{LaxM-ssect}; in particular $A$ need not be selfadjoint:
 
\begin{Theorem}\label{thm:Temam}
Let $V$ be a separable Hilbert space with $V \subseteq H$ algebraically, topologically and densely,
cf. \eqref{VHV-eq} and
\eqref{eq:VV*emb},
and let $\A\colon V\to V^*$ be the bounded Lax--Milgram operator induced by a $V$-elliptic
sesquilinear form, cf.~\eqref{eq:calA}. When $u_0 \in H$ and $f \in L_2(0,T; V^*)$ are given, 
then \eqref{ivp-id} has a uniquely determined solution $u(t)$ belonging to the space 
\begin{align}  \label{eq:X}
  X = L_2(0,T; V) \bigcap C([0,T];H) \bigcap H^1(0,T;V^*).
\end{align}
\end{Theorem}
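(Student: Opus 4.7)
The plan is to use the Faedo--Galerkin method combined with energy estimates from $V$-ellipticity. Since $V$ is separable, pick a sequence $(w_j)_{j\in\N}\subset V$ of linearly independent vectors whose finite linear combinations are dense in $V$ (and hence in $H$), and set $V_m=\Span(w_1,\dots,w_m)$. For each $m$ I would seek $u_m(t)=\sum_{j=1}^m g_{jm}(t)w_j$ satisfying
\begin{equation}
  \ip{u_m'(t)}{w_k}+a(u_m(t),w_k)=\dualp{f(t)}{w_k},\qquad k=1,\dots,m,
\end{equation}
with $u_m(0)=u_0^m\in V_m$ chosen so that $u_0^m\to u_0$ in $H$. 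The Gram matrix $(\ip{w_j}{w_k})_{j,k\le m}$ is invertible, so this reduces to an $m\times m$ linear ODE system with $L_2$-coefficients for $(g_{jm})$, uniquely solvable on $[0,T]$ by Carath\'eodory theory.

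Energy estimates come from testing against $u_m$ itself: Lemma~\ref{lem:Leibniz_dual} applied with $u=v=u_m$ together with \eqref{eq:V-ell} and Young's inequality gives
\begin{equation}
  \tfrac12\partial_t|u_m|^2+C_4\|u_m\|^2\le \Re\dualp{f}{u_m}\le\tfrac{1}{2C_4}\|f\|_*^2+\tfrac{C_4}{2}\|u_m\|^2,
\end{equation}
so integrating in $t$ yields bounds of $(u_m)$ uniform in $m$ in $L_\infty(0,T;H)\cap L_2(0,T;V)$. Hence a subsequence converges weakly in $L_2(0,T;V)$ and weak-$\ast$ in $L_\infty(0,T;H)$ to some $u$. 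Testing the Galerkin identity against $\phi(t)w_k$ for $\phi\in C_0^\infty(\,]0,T[\,)$ and passing to the limit (using boundedness of $\A\in\B(V,V^*)$), followed by density of $\bigcup_m V_m$ in $V$, produces $\partial_tu+\A u=f$ in $\D'(0,T;V^*)$. Consequently $\partial_tu=f-\A u\in L_2(0,T;V^*)$, so $u\in H^1(0,T;V^*)$, and Remark~\ref{rem:Temam} upgrades this to $u\in C([0,T];H)$, placing $u$ in the space $X$.

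The initial condition is identified by choosing $\phi\in C^\infty([0,T])$ with $\phi(T)=0$ and $\phi(0)=1$; integration by parts on the Galerkin level (via Lemma~\ref{lem:Leibniz_dual}) and passage to the limit yield $\ip{u(0)-u_0}{w_k}=0$ for every $k$, whence $u(0)=u_0$ by density of $\bigcup_m V_m$ in $H$. For uniqueness, the difference $w$ of any two $X$-solutions satisfies $w'+\A w=0$, $w(0)=0$, with $w\in L_2(0,T;V)\cap H^1(0,T;V^*)$, so Lemma~\ref{lem:Leibniz_dual} gives
\begin{equation}
  \tfrac{d}{dt}|w(t)|^2=2\Re\dualp{w'(t)}{w(t)}=-2\Re a(w(t),w(t))\le-2C_4\|w(t)\|^2\le 0,
\end{equation}
so $|w(t)|^2\le|w(0)|^2=0$ for all $t$. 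The main technical obstacle I expect is the careful recovery of the initial condition after only weak/weak-$\ast$ convergence of the Galerkin sequence is available; this has to be resolved by combining the continuity $u\in C([0,T];H)$ obtained from Remark~\ref{rem:Temam} with the strong convergence $u_0^m\to u_0$ in $H$ and the fact that the Galerkin equations hold distributionally in $t$.
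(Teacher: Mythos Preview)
Your Faedo--Galerkin argument is correct and is precisely the approach the paper points to: the paper omits a proof of this theorem, referring instead to Lions--Magenes and to Temam's Theorem~III.1.1, and explicitly notes that Temam's proof proceeds by Faedo--Galerkin approximation combined with the scalar reduction via Lemma~\ref{lem:Temam}. So you have essentially written out what the paper leaves to the references.

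One small correction: the step ``Remark~\ref{rem:Temam} upgrades this to $u\in C([0,T];H)$'' cites the wrong result. Remark~\ref{rem:Temam} only gives $H^1(0,T;B)\hookrightarrow C([0,T];B)$, which for $B=V^*$ yields continuity into $V^*$, not into $H$. The upgrade to $C([0,T];H)$ requires the interpolation-type inclusion $L_2(0,T;V)\cap H^1(0,T;V^*)\subset C([0,T];H)$, which in this paper is Lemma~\ref{lem:Leibniz_dual} (or Lemma~\ref{lem:Sobolev}). You already invoke Lemma~\ref{lem:Leibniz_dual} elsewhere, so just replace the reference here.
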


We omit a proof of this theorem, as it is a special case of a more general result of
Lions and Magenes~\cite[Sect.~3.4.4]{LiMa72} on $t$-dependent forms $a(t;u,v)$. Clearly the conjunction of 
$u\in L_2(0,T;V)$ and $u'\in L_2(0,T;V^*)$, which appears in \cite{LiMa72}, is equivalent to the claim
in \eqref{eq:X} that $u$ belongs to the intersection of $L_2(0,T,V)$ and $H^1(0,T;V^*)$.

Alternatively one can use Theorem~III.1.1 in Temam's book~\cite{Tm}, where proof is given using
Lemma~\ref{lem:Temam} to reduce to the scalar
differential equation $\partial_t \dualp{u}{\eta} + a(u,\eta)= \dualp{f}{\eta}$ in $\D'(0,T)$, for
$\eta \in V$, which is treated by Faedo--Galerkin approximation and basic functional
analysis. His proof extends straightforwardly, from a specific triple $(H,V,a)$ for the Navier-Stokes
equations, to the general set-up in Section~\ref{LaxM-ssect}, also when $A^*\ne A$.

However, either way, we need the finer theory described in the next two subsections.

\subsection{Well-Posedness}
We now substantiate that the unique solution from Theorem~\ref{thm:Temam} depends continuously on the data, 
so that \eqref{ivp-id} is well-posed in the sense of Hadamard. 
First we note that the solution in Theorem~\ref{thm:Temam}
is an element of the space $X$ in \eqref{eq:X},
which is a Banach space when normed, as done throughout, by
\begin{align}  \label{eq:Xnorm}
\|u\|_{X} = \big(\|u\|^2_{L_2(0,T;V)} + \sup_{0 \leq t \leq T}|u(t)|^2 + \|u\|^2_{H^1(0,T;V^*)}\big)^{1/2}.
\end{align}
To clarify a redundancy in this choice, we need a Sobolev inequality for vector functions. 

\begin{Lemma} \label{lem:Sobolev}
  There is an inclusion $L_2(0,T;V)\cap H^1(0,T;V^*)\subset C([0,T];H)$ and
  \begin{equation}
  \sup_{0\le t\le T}| u(t)|^2\le (1+\frac{C_2^2}{C_1^2T})\int_0^T \|u\|^2\,dt+\int_0^T \|u'\|_*^2\,dt.
  \end{equation}
\end{Lemma}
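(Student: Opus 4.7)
The plan is to reduce the continuity statement to what is already established, and then derive the pointwise bound via the Leibniz-type identity for $|u(t)|^2$.

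First I would note that the inclusion $L_2(0,T;V)\cap H^1(0,T;V^*)\subset C([0,T];H)$ is exactly the content of Lemma~\ref{lem:Leibniz_dual} applied with $v=u$, which moreover gives
\[
\frac{d}{dt}|u(t)|^2 = \dualp{u'}{u}+\overline{\dualp{u'}{u}} = 2\Re\dualp{u'(t)}{u(t)} \quad\text{in }\D'(0,T).
\]
So the continuous representative of $t\mapsto|u(t)|^2$ is absolutely continuous and has $L_1$-derivative $2\Re\dualp{u'}{u}$. Integration then yields, for any $s,t\in[0,T]$,
\[
|u(t)|^2 = |u(s)|^2 + 2\int_s^t \Re\dualp{u'(\tau)}{u(\tau)}\,d\tau.
\]

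Next I would estimate the integrand by the duality inequality $|\dualp{u'}{u}|\le\|u'\|_*\|u\|$ followed by Young's inequality $2ab\le a^2+b^2$, giving
\[
|u(t)|^2 \le |u(s)|^2 + \int_0^T\|u'(\tau)\|_*^2\,d\tau + \int_0^T\|u(\tau)\|^2\,d\tau.
\]
To eliminate $|u(s)|^2$ in favour of an integral on the right-hand side, I would apply the mean value theorem for integrals of the continuous function $|u(\cdot)|^2$, choosing $s_0\in[0,T]$ with $|u(s_0)|^2 = T^{-1}\int_0^T|u(\tau)|^2\,d\tau$. Combining with the embedding $|v|\le (C_2/C_1)\|v\|$, which follows from \eqref{eq:VV*emb}, gives
\[
|u(s_0)|^2 \le \frac{C_2^2}{C_1^2 T}\int_0^T\|u(\tau)\|^2\,d\tau.
\]

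Inserting this at $s=s_0$ and taking the supremum over $t\in[0,T]$ produces the asserted estimate. No step looks like a genuine obstacle, since all the nontrivial facts (continuity of the $H$-valued representative and the distributional Leibniz rule) have been recorded in Lemma~\ref{lem:Leibniz_dual}; the only mild care needed is to justify the mean value argument, which is immediate from continuity of $|u(\cdot)|^2$ on the compact interval $[0,T]$.
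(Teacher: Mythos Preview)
Your proof is correct and follows essentially the same route as the paper's: continuity and the identity $\partial_t|u|^2=2\Re\dualp{u'}{u}$ from Lemma~\ref{lem:Leibniz_dual}, integration combined with Young's inequality, and then the Mean Value Theorem (as in Remark~\ref{rem:Temam}) to replace the point value $|u(s_0)|^2$ by an average bounded via \eqref{eq:VV*emb}. The only cosmetic difference is that the paper invokes the minimum of $|u(\cdot)|^2$ rather than the integral mean value, which yields the same bound.
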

\begin{proof}
  If $u$ belongs to the intersection, the continuity follows from
  Lemma~\ref{lem:Leibniz_dual},  where the formula gives $\partial_t|u|^2= 2\Re
  \dual{u'}{u}$. By Lemma~\ref{lem:Temam}, integration of both sides entails
\begin{equation}
  | u(t)|^2\le |u(t_0)|^2+\int_0^T (\|u\|^2+ \|u'\|_*^2)\,dt,
\end{equation}
which by use of the Mean Value Theorem as in Remark~\ref{rem:Temam} leads to the estimate.
\end{proof}
\begin{Remark} \label{redundancy-rem}
  In our solution set $X$ in \eqref{eq:X} one can safely omit the space $C([0,T];H)$,
  according to Lemma~\ref{lem:Sobolev}. Likewise $\sup|u|$ can be removed from $\| \cdot\|_{X}$,
  as one just obtains an equivalent norm (similarly for the term $\int_0^T\|u(t)\|_*^2\,dt$ in \eqref{eq:X-intro}). 
  Thus $X$ is more precisely a Hilbertable space; we omit this detail in the sequel for the sake
  of simplicity. However,
  we shall keep $X$ as stated in order to emphasize the properties of the solutions.
\end{Remark}

The next result on stability is well known among experts, and while it may be
derived from the abstract proofs in 
\cite{LiMa72}, we shall give a direct proof based on explicit estimates:

\begin{Corollary}\label{cor:Temam}
The unique solution $u$ of \eqref{ivp-id}, given by
Theorem \ref{thm:Temam}, depends
continuously as an element of $X$ on the data $(f,u_0) \in L_2(0,T;V^*)\oplus H$, i.e.
\begin{align}\label{ivp-est}
\|u\|^2_{X} \leq c (|u_0|^2 + \|f\|^2_{L_2(0,T;V^*)}).
\end{align}
That is, the solution operator $(f,u_0)\mapsto u$ is a bounded linear map
$L_2(0,T;V^*)\oplus H\to X$.
\end{Corollary}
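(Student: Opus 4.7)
The plan is to derive \eqref{ivp-est} by a standard energy argument applied to the equation in \eqref{ivp-id}. Since $u\in L_2(0,T;V)\cap H^1(0,T;V^*)$ by Theorem~\ref{thm:Temam}, Lemma~\ref{lem:Leibniz_dual} applied with $v=u$ gives $\partial_t|u|^2 = 2\Re\dualp{u'}{u}$ in $\D'(0,T)$. Substituting $u'=f-\A u$ and using \eqref{eq:calA} to rewrite $\dualp{\A u}{u}=a(u,u)$ yields
\begin{equation}
   \partial_t |u|^2 + 2\Re a(u,u) = 2\Re\dualp{f}{u}
   \quad\text{in $\D'(0,T)$}.
\end{equation}

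Next, I would apply the $V$-ellipticity $\Re a(u,u)\geq C_4\|u\|^2$ from \eqref{eq:V-ell} on the left and estimate the right-hand side by $|\dualp{f}{u}|\leq \|f\|_*\|u\|$ followed by Young's inequality $2\|f\|_*\|u\|\leq C_4^{-1}\|f\|_*^2+C_4\|u\|^2$. This absorbs one $\|u\|^2$ term and leaves
\begin{equation}
   \partial_t |u|^2 + C_4 \|u\|^2 \leq C_4^{-1}\|f\|_*^2.
\end{equation}
Since the left side is in $L_1(0,T)$ and $|u(\cdot)|^2\in W^{1,1}(0,T)$ as in the proof of Lemma~\ref{lem:Leibniz_dual}, Lemma~\ref{lem:Temam} justifies integration from $0$ to $t$, using the continuous representative with $u(0)=u_0$ provided by Theorem~\ref{thm:Temam}. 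This yields
\begin{equation}
    |u(t)|^2 + C_4\int_0^t \|u(s)\|^2 \,ds \;\leq\; |u_0|^2 + C_4^{-1}\|f\|^2_{L_2(0,T;V^*)},
\end{equation}
which controls both $\sup_{0\le t\le T}|u(t)|^2$ and $\|u\|^2_{L_2(0,T;V)}$ by the right-hand side.

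To finish bounding $\|u\|_X$, I still need the $H^1(0,T;V^*)$-part. For the $L_2(V^*)$-norm of $u$ itself, I would use the embedding constant from \eqref{eq:VV*emb} to get $\|u\|_*\le C_1|u|$, so $\int_0^T\|u\|_*^2\,dt \le C_1^2 T\sup_t|u|^2$, already estimated. For $u'$, the equation reads $u'=f-\A u$, and the boundedness $\|\A v\|_* = \sup_{\|w\|=1}|a(v,w)|\le C_3\|v\|$ gives
\begin{equation}
   \|u'\|^2_{L_2(0,T;V^*)} \;\leq\; 2\|f\|^2_{L_2(0,T;V^*)} + 2C_3^2\int_0^T\|u(t)\|^2\,dt.
\end{equation}
Combining the three estimates yields $\|u\|_X^2\le c(|u_0|^2+\|f\|^2_{L_2(0,T;V^*)})$, and linearity of the solution map is immediate from uniqueness in Theorem~\ref{thm:Temam}.

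The only delicate point is the passage from the distributional identity for $\partial_t|u|^2$ to a pointwise-integrated inequality with the correct initial value; this is precisely what Lemma~\ref{lem:Temam} and Lemma~\ref{lem:Leibniz_dual} are designed to handle, so no genuine obstacle remains. Everything else reduces to combining the ellipticity and boundedness constants from \eqref{eq:V-ell} and \eqref{eq:VV*emb} with Young's inequality.
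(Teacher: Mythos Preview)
Your proof is correct and follows essentially the same energy argument as the paper: both pair the equation with $u$, invoke Lemma~\ref{lem:Leibniz_dual} for $\partial_t|u|^2$, use $V$-ellipticity and Young's inequality to absorb $\|u\|^2$, integrate via Lemma~\ref{lem:Temam} to control $\sup_t|u(t)|^2$ and $\|u\|_{L_2(0,T;V)}^2$, and then bound $\|u'\|_{L_2(0,T;V^*)}$ directly from $u'=f-\A u$ using $\|\A\|_{\B(V,V^*)}\le C_3$. The only cosmetic addition you make is the explicit handling of $\int_0^T\|u\|_*^2\,dt$, which the paper leaves implicit since it is dominated by $\|u\|_{L_2(0,T;V)}^2$ via \eqref{eq:VV*emb}.
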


\begin{proof}
Clearly $u \in L_2(0,T;V)$ while the functions $u',f$ and $\A u$ belong to $ L_2(0,T;V^*)$, so as an identity of
\mbox{integrable functions,}
\begin{align}
  \Re\dualp{\partial_t u}{u} + \Re\dualp{\A u}{u} = \Re\dualp{f}{u}.
\end{align}
Hence Lemma \ref{lem:Leibniz_dual} and the $V$-ellipticity gives
\begin{align}
  \partial_t |u|^2 +  2C_4 \|u\|^2 \leq 2 |\dualp{f}{u}| \leq C_4^{-1} \|f\|_{*}^2 + C_4 \|u\|^2.
\end{align}

Using again that $|u(t)|^2$ and $\partial_t |u(t)|^2 $ are in $L_1(0,T)$, taking $B=\C$ in Lemma \ref{lem:Temam} yields
\begin{align}
  |u(t)|^2 + C_4 \int_0^t \|u(s)\|^2 \,ds \leq |u_0|^2 + C_4^ {-1}\|f\|_{L_2(0,T;V^*)}^2.
\end{align}
For the first two contributions to the $X$-norm this gives
\begin{align}
  \sup_{0\leq t \leq T} |u(t)|^2  &\leq |u_0|^2 + C_4^{-1}\|f\|_{L_2(0,T;V^*)}^2,
\\
  \|u\|_{L_2(0,T;V)}^2 &\leq C_4^{-1}|u_0|^2 + C_4^{-2}\|f\|_{L_2(0,T;V^*)}^2.
\label{uV-est}
\end{align}
Since $u$ solves \eqref{ivp-id} it is clear that
$\|\partial_t u(t) \|_{*}^2 \leq (\|f(t)\|_{*} + \|\A u \|_{*})^2 $,
so we get
\begin{align}
  \int_0^T \|\partial_t u(t) \|_{*}^2 \,dt \leq 2 \int_0^T \|f(t)\|_{*}^2 \,dt 
  + 2 \|\A\|_{\B(V,V^*)}^2  \int_0^T \|u\|^2 \,dt ,
\end{align}
which upon substitution of \eqref{uV-est} altogether shows \eqref{ivp-est}.
\end{proof}

\subsection{The First Order Solution Formula}  \label{sec:solvability_of_backward_heateq_a_repr_of_the_soln}
We now supplement the  well-posedness by a direct proof of the variation of constants formula,
which requires that the extended Lax--Milgram operator $\A$ generates an analytic semigroup in $V^*$.
This is known, cf.~\cite{Tan79}, but lacking a concise proof in the literature,  we
begin by analysing $A$ in $H$:

\begin{Lemma}  \label{Agen-lem}
For a $V$-elliptic Lax--Milgram operator $A$,
both $-A$ and $-A^*$ have the sector $\Sigma$ in \eqref{eq:Sigma} in their resolvent sets for
$\theta=\operatorname{arccot}(C_3/C_4)$ and they generate analytic semigroups on $H$. 
This holds verbatim for the extensions $-\A$ and $-\A'$ in $V^*$.
\end{Lemma}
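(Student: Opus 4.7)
My plan is to apply Pazy's criterion (Theorem~\ref{Pazy-thm}) to $-A$ on $H$, proving first $\Sigma\subseteq\rho(-A)$ and then the resolvent bound $\|(\lambda I+A)^{-1}\|_{\B(H)}\le M/|\lambda|$ on every slightly smaller sector. The decisive structural fact is \eqref{Asect-id}: both $\sigma(A)$ and the numerical range $\nu(A)$ lie in the closed sector
\begin{equation*}
 S=\{z\in\C:|\Im z|\le (C_3/C_4)\Re z\}=\{z\in\C:|\arg z|\le \pi/2-\theta\},
\end{equation*}
since $\arctan(C_3/C_4)=\pi/2-\theta$. For $\lambda\in\Sigma$ we then have $-\lambda\notin S$, so $\delta(\lambda):=\op{dist}(-\lambda,S)>0$.

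For injectivity and closed range of $\lambda I+A$, I would use the numerical range bound: for $u\in D(A)$ with $|u|=1$ and $\omega=\ip{Au}{u}\in\nu(A)\subseteq S$,
\begin{equation*}
|(\lambda I+A)u|\ge |\ip{(\lambda I+A)u}{u}|=|\lambda+\omega|\ge\delta(\lambda).
\end{equation*}
Dense range would follow from the identical estimate applied to $a^*$---which inherits the same constants $C_3,C_4$---showing that $\bar\lambda I+A^*$ is injective whenever $\bar\lambda\in\Sigma$; and $\Sigma$ is invariant under conjugation. Hence $R(\lambda I+A)^\perp=Z(\bar\lambda I+A^*)=\{0\}$, $\lambda I+A$ is bijective with $\|(\lambda I+A)^{-1}\|\le 1/\delta(\lambda)$, and $\Sigma\subseteq\rho(-A)$. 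For the Pazy bound, restrict to $\Sigma_{\theta'}:=\{|\arg z|<\pi/2+\theta'\}\cup\{0\}$ with $\theta'\in\,]0,\theta[\,$. Elementary planar geometry (orthogonal projection of $-\lambda$ onto the nearer boundary ray of $S$, with a fall-back to the origin) gives $\delta(\lambda)\ge |\lambda|\sin(\theta-\theta')$, since the angular gap between $-\lambda$ and $S$ is at least $\theta-\theta'$. This yields $\|(\lambda I+A)^{-1}\|\le (|\lambda|\sin(\theta-\theta'))^{-1}$ on $\Sigma_{\theta'}$. Theorem~\ref{Pazy-thm} applied for each $\theta'<\theta$ produces an analytic semigroup on $H$ with angle $\theta'$, and the union over $\theta'<\theta$ delivers analyticity in $\{|\arg z|<\theta\}$. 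The case $-A^*$ is verbatim the same, using $a^*$.

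For $-\A$ and $-\A'$ on $V^*$, the $V^*$-numerical range $\ip{\A u}{u}_{V^*}=a_\Re(u,\A^{-1}u)$ from \eqref{ipV*-id} is not manifestly sectorial, so here I would instead rely on Lax--Milgram. For $\lambda\in\Sigma_{\theta'}$ choose a rotation $\alpha(\lambda)\in[-\theta',\theta']$ such that simultaneously $\Re(e^{i\alpha}\lambda)\ge 0$ (by aligning $e^{i\alpha}\lambda$ with the right half-plane) and $\Re(e^{i\alpha}a(u,u))\ge c_{\theta'}\|u\|^2$ (since $|\alpha|<\theta$ keeps $e^{i\alpha}a(u,u)$ in a strictly proper subsector of the right half-plane, giving $c_{\theta'}=C_4\sin(\theta-\theta')$). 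Then the shifted form $e^{i\alpha}a_\lambda$, where $a_\lambda(u,v):=\lambda\ip{u}{v}+a(u,v)$, is bounded and $V$-elliptic on $V$, so Lax--Milgram yields bijectivity of $\lambda I+\A\colon V\to V^*$. Testing the equation $\lambda u+\A u=f$ against $u$ gives $c_{\theta'}\|u\|^2\le\|f\|_*\|u\|$, hence $\|u\|\le c_{\theta'}^{-1}\|f\|_*$; rewriting $\lambda u=f-\A u$ in $V^*$ and estimating $\|\A u\|_*\le C_3\|u\|$ then produces $|\lambda|\|u\|_*\le (1+C_3c_{\theta'}^{-1})\|f\|_*$, the correctly scaled Pazy bound, and Theorem~\ref{Pazy-thm} applies as before; $-\A'$ is handled identically using $a^*$. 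The main obstacle is precisely this $1/|\lambda|$ scaling in $V^*$: the direct energy estimate only controls the $V$-norm $\|u\|$ and not the $V^*$-norm $\|u\|_*$, and the key trick is to feed the $V$-bound back through the equation itself to isolate the required factor $1/|\lambda|$.
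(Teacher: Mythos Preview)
Your proof is correct, and for the $V^*$ part it essentially coincides with the paper's argument: both rotate the form by a suitable $e^{i\alpha}$ to recover $V$-ellipticity of $a_\lambda$, then feed the resulting $V$-bound $\|u\|\le c^{-1}\|f\|_*$ back through $\lambda u=f-\A u$ to extract the $1/|\lambda|$ scaling in $V^*$. The paper writes this last step as
\[
|\lambda|\,\|u\|_*\le\sup_{\|w\|=1}|\langle(\A+\lambda)u,w\rangle|+C_3C_\theta^{-1}\big|\langle(\A+\lambda)u,u/\|u\|\rangle\big|,
\]
which unpacks to exactly your triangle-inequality estimate.

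Where you genuinely differ is in the $H$ part. The paper reuses the same rotation trick there: it shows $\Re(e^{i\delta}a(u,u))\ge C_\theta\|u\|^2$ with $C_\theta=C_4\cos\theta-C_3\sin\theta>0$ and deduces the numerical-range type bound $|\lambda|\,|u|^2\le(1+C_3C_\theta^{-1})|\ip{(A+\lambda)u}{u}|$ directly from ellipticity. You instead invoke the geometric distance $\delta(\lambda)=\op{dist}(-\lambda,\overline{\nu(A)})$ and the standard m-sectorial estimate $|(A+\lambda)u|\ge\delta(\lambda)|u|$, with surjectivity via the adjoint. Both are clean; the paper's version has the advantage of being a single unified argument for $H$ and $V^*$, while yours makes the $H$ case a one-liner from the numerical-range inclusion \eqref{Asect-id} already recorded in the preliminaries. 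One small omission: your distance argument gives $\delta(0)=0$, so you should note separately that $0\in\rho(A)$ (which is stated in Section~\ref{LaxM-ssect} from $V$-ellipticity) to get the full inclusion $\Sigma\subseteq\rho(-A)$ required by Theorem~\ref{Pazy-thm}.
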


\begin{proof}
To apply Theorem~\ref{Pazy-thm}, we let $\lambda\neq0$ be given in
the sector $\Sigma$ for some angle $\theta$ satisfying $0< \theta< \operatorname{arccot}(C_3/C_4)$.
Then it is clear that $\delta = -\operatorname{sgn}(\Im\lambda)\theta$ or $\delta = 0$ gives
\begin{equation}  \label{dl-ineq}
  \Re (e^{i \delta} \lambda) \geq 0.
\end{equation}
In case $\delta \in \left\{ \pm \theta \right\}$ a multiplication of the inequalities 
\eqref{Asect-id} by $-\sin \delta$ yields
\begin{align}
- \sin \delta \Im a(u,u) \geq - C_3 C_4^{-1} \sin \theta \Re a(u,u).
\end{align}
In addition $C_{\theta} := C_4 \cos \theta - C_3 \sin \theta > 0$, because $\cot \theta  > C_3C_4^{-1}$.
So for $u \in D(A)$,
\begin{align}
   \Re (e^{i\delta}(a(u,u)+\lambda\ip{u}{u})) &\geq \Re (e^{i \delta} a(u,u)) =
   \cos \delta \Re a(u,u) - \sin \delta \Im a(u,u)
\notag \\
  & \geq (\cos \theta - C_3 C_4^{-1} \sin \theta) \Re a(u,u)
\notag \\
  & \geq   C_{\theta} \|u\|^2.
\label{eq:roteret_V-ell}
\end{align}
This $V$-ellipticity holds also if $\delta=0$, cf.\ \eqref{dl-ineq},
so $e^{i \delta}(A+\lambda I)$ is in any case bijective; and so is $-A-\lambda I$.

To bound $-(A+\lambda I)^{-1}$, we see from \eqref{eq:roteret_V-ell} 
that for $u \in D(A)$, 
\begin{align}
  |\lambda|\ip{u}{u} & \leq |\ip{(A+\lambda)u}{u} | + |a(u,u)| \leq |\ip{(A+\lambda)u}{u} | + C_3\|u\|^2 
\notag \\
    & \leq (1+ C_3 C_{\theta}^{-1}) |\ip{(A+\lambda)u}{u}|.
\label{eq:solvability_of_backward_heateq_vurdering_med_lambda_og_h_norm}
\end{align} 
This implies \eqref{lIA-est} for $-A$.
Since $A^*$ is the Lax--Milgram operator associated to the elliptic form $a^*$, the above also
entails the statement for $-A^*$.

For $\A$ it follows at once from \eqref{eq:roteret_V-ell} that 
$\Re \dualp{e^{i \delta} (\A + \lambda) u}{u} \geq C_{\theta} \|u\|^2$  for $u \in V$.  
Hence $R(\A+\lambda I)$ is closed in $V^*$, and it is also dense since $R(\A+\lambda I)\supset R(A+\lambda I)=H$ 
by the above; i.e., $\A+\lambda I$ is surjective.
Mimicking \eqref{eq:solvability_of_backward_heateq_vurdering_med_lambda_og_h_norm}, 
we get for $u\neq 0$, $\|w\|=1$, both in $V$,
\begin{align} \label{AV-est}
  |\lambda|\cdot \|u\|_{*}  
  \leq \sup_w\big|\dualp{(\A +\lambda)u}{w}\big| + 
   C_3 C_{\theta}^{-1} \big| \dualp{(\A +\lambda)u}{\frac1{\|u\|}u}\big|
  \leq  c\|(\A +\lambda)u\|_{*}.
\end{align}
This yields injectivity of $\A+\lambda I$ and the resolvent estimate.
$\A'$ is covered through $a^*$.
\end{proof}

We denote by $e^{-t\A}$ the semigroup generated by $-\A$ on $V^*$, to distinguish it
from $e^{-tA}$ on $H$. Analogously for $e^{-t\A'}\in\B(V^*)$.
As $A\subset\A$  implies that
$(\A+\lambda I)^{-1}|_{H}=(A+\lambda I)^{-1}$, and since $A$ and $\A$ have the same sector $\Sigma$ by 
Lemma~\ref{Agen-lem}, the well-known  Laplace transformation formula, \mbox{cf.\ \cite[Thm.~1.7.7]{Paz83},} yields the
corresponding fact, say $e^{-t\A}|_{H}=e^{-tA}$ for the semigroups:

\begin{Lemma} \label{lem:AA}
For all $x\in H$ one has $e^{-t\A}x=e^{-tA}x$ as well as $e^{-t\A'}x=e^{-tA^*}x$.
\end{Lemma}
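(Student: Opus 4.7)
The plan is to invoke the Dunford/Bromwich contour-integral representation of analytic semigroups together with the elementary observation that the resolvents of $\A$ and $A$ agree on $H$.

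First I would verify the resolvent coincidence on $H$: for any $\lambda$ in the common sector $\Sigma$ furnished by Lemma~\ref{Agen-lem}, and any $x\in H$, set $u=(\lambda I+\A)^{-1}x\in V$. Then $\A u = x-\lambda u$, and since $x\in H$ while $\lambda u\in V\subset H$, we obtain $\A u\in H$. By the very definition $D(A)=\A^{-1}(H)$ this gives $u\in D(A)$ with $(\lambda I+A)u=x$, and conversely $D(A)\subset V$ yields the opposite inclusion. Hence
\begin{equation}
  (\lambda I+\A)^{-1}|_{H}=(\lambda I+A)^{-1},\qquad \lambda\in\Sigma,
\end{equation}
and in particular the $V^*$-valued resolvent sends $H$ into $D(A)\subset H$. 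The identical argument with $a^*$ in place of $a$ handles $\A'$ and $A^*$.

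Next I would apply Theorem~1.7.7 in \cite{Paz83} (already used in the proof of Lemma~\ref{Agen-lem}) to obtain the Cauchy integral representations
\begin{equation}
  e^{-tA}x=\frac{1}{2\pi\im}\int_{\Gamma}e^{\mu t}(\mu I+A)^{-1}x\,d\mu,\qquad
  e^{-t\A}y=\frac{1}{2\pi\im}\int_{\Gamma}e^{\mu t}(\mu I+\A)^{-1}y\,d\mu,
\end{equation}
where $\Gamma\subset\Sigma$ is a standard keyhole contour separating $0$ from $-\Sigma^c$; the first integral converges as a Bochner integral in $H$ for $x\in H$, the second in $V^*$ for $y\in V^*$. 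For $x\in H$ the two integrands coincide pointwise by the resolvent identity above, and their common value lies in $H\subset V^*$. Since the embedding $H\hookrightarrow V^*$ is continuous, the Bochner integral taken in $H$ also converges in $V^*$ to the same element. Therefore $e^{-t\A}x=e^{-tA}x$ for every $x\in H$, as required.

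The statement $e^{-t\A'}x=e^{-tA^*}x$ follows by repeating the argument verbatim, with $a^*$ in place of $a$ (so that $\A'$ and $A^*$ take the roles of $\A$ and $A$). There is no genuine obstacle here: the only point requiring minor care is checking that the two Bochner integrals really yield the same element, which is automatic from $H\hookrightarrow V^*$ and the pointwise agreement of the integrands on $H$.
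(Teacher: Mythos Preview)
Your proposal is correct and follows essentially the same approach as the paper: the paper's proof is the single sentence preceding the lemma, which notes that $A\subset\A$ yields $(\A+\lambda I)^{-1}|_H=(A+\lambda I)^{-1}$ and then invokes the Laplace transformation formula from \cite[Thm.~1.7.7]{Paz83} to conclude. You have simply written out the details of that resolvent restriction and of why the two Bochner integrals (in $H$ and in $V^*$) coincide via the continuous embedding $H\hookrightarrow V^*$.
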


We could add that $A$ and $A^*$ are dissipative, as $m(A)>0$, $m(A^*)>0$ in $H$, so $e^{-tA}$, $e^{-tA^*}$ are
contractions for $t\ge 0$ by the Lumer--Philips theorem; cf.\ \cite[Cor.~14.12]{G09}.

Using Lemmas~\ref{Agen-lem} and \ref{lem:AA}, the announced formula results as an addendum to
Theorem~\ref{thm:Temam}:

\begin{Theorem}   \label{thm:repr_for_u}
The unique solution $u$ in $X$ provided by Theorem~\ref{thm:Temam} satisfies that
\begin{equation} \label{u-id}
  u(t) = e^{-tA}u_0 + \int_0^t e^{-(t-s)\A}f(s) \,ds \qquad\text{for } 0\leq t\leq T,
\end{equation}
where  each of the three terms belongs to $X$.
\end{Theorem}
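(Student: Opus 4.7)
My plan is to derive \eqref{u-id} by differentiating the auxiliary vector function $\psi(s) := e^{-(t-s)\A} u(s)$ on $[0,t]$ for an arbitrary fixed $t \in\,]0,T]$ and integrating. Since $u$ restricts to $H^1(0,t; V^*)$ and $-\A$ generates the analytic semigroup $e^{-r\A}$ on $V^*$ by Lemma \ref{Agen-lem}, Proposition \ref{prop:Leibniz_rule} (applied with $\Gen = -\A$, $B = V^*$) gives
\begin{equation*}
  \psi'(s) = \A e^{-(t-s)\A} u(s) + e^{-(t-s)\A} u'(s) \qquad \text{in } \D'(0,t; V^*).
\end{equation*}

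The decisive step is the cancellation that removes the singular factor $\A e^{-(t-s)\A}$ as $s\to t^-$: since $u(s) \in V = D(\A)$ for a.e.\ $s$ and $e^{-r\A}$ commutes with its generator on $V$, one has $\A e^{-(t-s)\A} u(s) = e^{-(t-s)\A} \A u(s)$ a.e. Substituting the equation $u' = f - \A u$ then collapses the right-hand side to
\begin{equation*}
  \psi'(s) = e^{-(t-s)\A} f(s) \qquad \text{in } \D'(0,t; V^*),
\end{equation*}
an identity of elements of $L_1(0,t; V^*)$ by \eqref{L1-est}. Applying Lemma \ref{lem:Temam} to $\psi$, which itself lies in $L_1(0,t; V^*)$, yields a continuous representative with $\psi(\tau) - \psi(0) = \int_0^{\tau} e^{-(t-s)\A} f(s)\,ds$. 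Letting $\tau\to t^-$: the strong continuity of $e^{-r\A}$ at $r = 0$ combined with $u \in C([0,T]; H) \hookrightarrow C([0,T]; V^*)$ yields $\psi(\tau)\to u(t)$, while $\psi(0) = e^{-t\A} u_0 = e^{-tA} u_0$ by Lemma \ref{lem:AA}. This produces \eqref{u-id} for $t\in\,]0,T]$, and the case $t = 0$ is trivial.

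To verify the separate $X$-membership of each term on the right of \eqref{u-id}, I apply the same argument to the unique solution $v \in X$ of $v' + \A v = 0$, $v(0) = u_0$ furnished by Theorem \ref{thm:Temam} with $f = 0$; this identifies $v(t) = e^{-tA} u_0$, so the first term belongs to $X$. The integral term is then $u - v$ and inherits membership in $X$.

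The hard part is handling the blow-up $\|\A e^{-(t-s)\A}\|_{\B(V^*)}\le c/(t-s)$ from Theorem \ref{Pazy-thm}, which a priori obstructs pointwise evaluation of the first summand in $\psi'$ near $s = t$. The cancellation above circumvents this, but one must interpret the Leibniz identity as a genuine distributional statement in $\D'(0,t; V^*)$ and verify that the commutation holds pointwise a.e.\ in $V^*$ (standard on $D(\A)$, but requiring that $\A u \in L_2(0,T; V^*)$, which follows from $u\in L_2(0,T;V)$ and $\A\in\B(V,V^*)$).
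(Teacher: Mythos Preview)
Your proof is correct and follows essentially the same route as the paper's: apply the Leibniz rule from Proposition~\ref{prop:Leibniz_rule} to $e^{-(t-s)\A}u(s)$, commute $\A$ through the semigroup using $u(s)\in V=D(\A)$ a.e.\ to cancel the singular term, integrate via Lemma~\ref{lem:Temam}, and invoke Lemma~\ref{lem:AA} for the $u_0$-term. The only cosmetic difference is that the paper fixes the exponent at $T$, obtains \eqref{u-id} first at $t=T$, and then remarks that the argument applies on any subinterval $[0,T_1]$; you instead work directly on $[0,t]$ for each $t$ and pass to the limit $\tau\to t^-$ (which, incidentally, is unnecessary once you note that $s\mapsto e^{-(t-s)\A}u(s)$ is already continuous on the closed interval $[0,t]$ and hence is itself the continuous representative furnished by Lemma~\ref{lem:Temam}).
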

\begin{proof}
Once \eqref{u-id} has been shown, Theorem~\ref{thm:Temam} applies in particular to cases with $f=0$, yielding that
$u(t)$ and hence $e^{-tA}u_0$ belongs to $X$. For general data $(f,u_0)$ this means that the last
term containing $f$ necessarily is a member of $X$ too.

To derive formula \eqref{u-id} in the present general context, one should note that all terms in the equation
$\partial_t u+\A u=f$ belong to the space $L_2(0,T;V^*)$. Therefore the operator $e^{-(T-t)\A}$
applies to both sides as an integration factor, yielding
\begin{equation}
  e^{-(T-t)\A}\partial_t u(t)+ e^{-(T-t)\A}\A u(t)=e^{-(T-t)\A}f(t).
\end{equation}
Now $e^{-(T-t)\A}u(t)$ belongs to $L_1(0,T;V^*)$, cf.\ the argument prior to \eqref{L1-est}.
For its derivative in $\D'(0,T;V^*)$ the Leibniz rule in
Proposition~\ref{prop:Leibniz_rule} gives, as $u(t)\in V=D(\A)$ for $t$ a.e.,
\begin{equation}
  \partial_t(e^{-(T-t)\A}u(t))=e^{-(T-t)\A}\partial_tu(t)+ e^{-(T-t)\A} \A u(t).
\end{equation}
As both terms on the right-hand side are in $L_2(0,T;V^*)$, the implication \mbox{(ii)}$\implies$\mbox{(i)}
in Lemma~\ref{lem:Temam} gives that
\begin{equation} \label{eq:identityT}
  e^{-(T-t)\A}u(t)=e^{-T\A}u_0+\int_0^t e^{-(T-s)\A}f(s)\,ds.
\end{equation}
From this identity in $C([0,T];V^*)$ formula \eqref{u-id} results in case
$t=T$ by evaluation, when also Lemma~\ref{lem:AA} is used for the term containing $u_0$.
However, obviously the above argument applies to any subinterval 
$[0,T_1]\subset [0,T]$, whence \eqref{u-id} is valid for all $t$ in $[0,T]$.
\end{proof}

Alternatively one could conclude by applying $e^{-(T-s)\A}=e^{-(T-t)\A}e^{-(t-s)\A}$ 
in \eqref{eq:identityT} and use the Bochner identity to commute $e^{-(T-t)\A}$ with the
integral: as analytic semigroups like $e^{-(T-t)\A}$ are always injective, cf.~Proposition~\ref{inj-prop},
formula \eqref{u-id} then results at once. 

For later reference we show similarly the next inequality:

\begin{Corollary} \label{cor:u-inequality}
  The solution $e^{-t\A}u_0$ to the problem with $f=0$ in Theorem~\ref{thm:Temam} belongs to $L_2(0,T;V)$ and
  fulfils, for every $u_0\in H$,
  \begin{equation}
    \sup_{0\le t\le T}(T-t)|e^{-t\A}u_0|^2 \le C_5\int_0^T \| e^{-t\A}u_0\|^2 \,dt.
  \end{equation}
\end{Corollary}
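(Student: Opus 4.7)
The plan is that the $L_2$-claim is immediate: Theorem~\ref{thm:Temam} applied with $f=0$ furnishes a unique solution of $u'+\A u=0$, $u(0)=u_0$ in $X\subset L_2(0,T;V)$, and by Theorem~\ref{thm:repr_for_u} this solution is precisely $e^{-t\A}u_0$. For the pointwise bound I would multiply the energy identity by the weight $T-s$ and integrate from $t$ to $T$, exploiting that the boundary term at $T$ vanishes.

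Concretely, the key steps are as follows. First, since $u'=-\A u$ holds in $\D'(0,T;V^*)$ with $u\in L_2(0,T;V)$ and $u'\in L_2(0,T;V^*)$, Lemma~\ref{lem:Leibniz_dual} (with $u=v$) gives that $|u|^2\in W^{1,1}(0,T)$ with
\begin{equation}
  \partial_s|u(s)|^2 \;=\; 2\Re\dualp{u'(s)}{u(s)} \;=\; -2\Re a(u(s),u(s)).
\end{equation}
Second, since $(T-s)$ is smooth, the product $(T-s)|u(s)|^2$ lies in $W^{1,1}(0,T)$ with the expected Leibniz derivative
\begin{equation}
  \partial_s\bigl((T-s)|u(s)|^2\bigr) \;=\; -|u(s)|^2 \;-\; 2(T-s)\Re a(u(s),u(s)),
\end{equation}
and both summands are integrable on $(0,T)$. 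Third, as the product belongs to $W^{1,1}\hookrightarrow C([0,T])$ (Remark~\ref{rem:Temam}) and vanishes at $s=T$, the fundamental theorem of calculus gives
\begin{equation}
  (T-t)|u(t)|^2 \;=\; \int_t^T |u(s)|^2\,ds \;+\; 2\int_t^T (T-s)\,\Re a(u(s),u(s))\,ds.
\end{equation}
Fourth, I estimate using \eqref{eq:VV*emb} in the form $|v|^2\le (C_2/C_1)^2\|v\|^2$, the continuity $|\Re a(u,u)|\le C_3\|u\|^2$ from \eqref{eq:V-ell}, and $(T-s)\le T$, then extend the integration to $[0,T]$. This bounds the right-hand side by $\bigl((C_2/C_1)^2+2C_3 T\bigr)\int_0^T\|u(s)\|^2\,ds$, whence the desired inequality on taking the supremum in $t$, with $C_5=(C_2/C_1)^2+2C_3T$.

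There is no substantial obstacle; the only care needed is in the second step, where the Leibniz rule is applied to a product whose $|u|^2$-factor is merely $W^{1,1}$ and not $C^1$. This is handled cleanly by the $W^{1,1}$ product rule with a smooth multiplier, after which the absolute continuity established in Remark~\ref{rem:Temam} lets the endpoint evaluation at $s=T$ and the integration go through verbatim.
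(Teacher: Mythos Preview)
Your proof is correct and follows essentially the same route as the paper: multiply the equation by the weight $(T-s)$, invoke Lemma~\ref{lem:Leibniz_dual} to differentiate $|u|^2$, integrate from $t$ to $T$ so the boundary term at $T$ drops out, and estimate crudely using \eqref{eq:VV*emb} and \eqref{eq:V-ell}. Your care with the $W^{1,1}$ product rule is slightly more explicit than the paper's ``integrating partially on $[t,T]$'', and your constant $C_5=(C_2/C_1)^2+2C_3T$ differs from the paper's $C_5=C_2/C_1+2TC_3$ only in the first summand (yours is the one that follows directly from $C_1|v|\le C_2\|v\|$).
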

\begin{proof}
  It is seen from Theorem~\ref{thm:repr_for_u} that $u(t)=e^{-t\A}u_0$ always is in $L_2(0,T;V)$, as a
  member of $X$. By taking scalar products with $(T-\cdot)u$ on both sides of the differential
  equation, one obtains in $L_1(0,T)$ the identity
  \begin{equation}
    (T-t)\dual{u'(t)}{u(t)}+(T-t)a(u(t),u(t))=0.
  \end{equation}
Taking real parts here, applying Lemma~\ref{lem:Leibniz_dual} to $u$ and integrating partially on $[t,T]$, one obtains
\begin{equation}
  \int_t^T |u(s)|^2\,ds -(T-t)|u(t)|^2= -2\int_t^T (T-s)\Re a(u(s),u(s))\,ds.
\end{equation}
By reorganising this, a crude estimate yields the result at once for $C_5=\frac{C_2}{C_1}+2TC_3$.
\end{proof}

\subsection{Non-Selfadjoint Dynamics}
It is classical that $e^{-tA}u_0$ in \eqref{u-id} is a term that decays exponentially for
$t\to\infty$ if $A$ is self-adjoint and has compact inverse on $H$. 
This follows from the eigenfunction expansions, cf.\ the formulas in the introduction and
Section~\ref{sa-ssect}, which imply for the 'height' function
$h(t)=|e^{-tA}u_0|$ that \mbox{$h(t)={\cal O}(e^{-t\Re\lambda_1})$}.

However, it is a much more precise dynamical property that $h(t)$ is a \emph{strictly convex}
function for $u_0\ne0$ (we refer to \cite{NiPe06} for a lucid account of convex
functions). Strict convexity is established below for wide classes of non-self-adjoint $A$, namely
if $A$ is hyponormal or such that $A^2$ is accretive. 

Moreover, it seems to be a novelty that the \emph{injectivity} of $e^{-tA}$ provided by Proposition~\ref{inj-prop} 
implies the strict convexity. For simplicity we first explain this for the square $h(t)^2$.

Indeed, differentiating twice for $t>0$ one finds for $u=e^{-tA}u_0$,
\begin{equation} \label{h''-id}
  (h^2)''= (-2\Re\ip{Ae^{-tA}u_0}{e^{-tA}u_0})'=2\Re\ip{A^2u}{u}+2\ip{Au}{Au}. 
\end{equation}
In case $A^2$ is accretive, that is when $m(A^2)\ge0$, we may keep only
the last term in \eqref{h''-id} to get that $(h^2)''(t)\ge 2|Ae^{-tA}u_0|^2$, which for $u_0\ne0$ implies $(h^2)''>0$ as 
both $A$ and $e^{-tA}$ are injective; cf.\ \eqref{eq:Sigma} and Proposition~\ref{inj-prop}.
Hence $h^2$ is strictly convex for $t>0$ if $m(A^2)\ge0$.

Another case is when $A$ is \emph{hyponormal}. For an unbounded operator $A$ this means that
\begin{equation}  \label{eq:hyponormal}
  D(A)\subset D(A^*)\quad\text{with} \quad  |A^*u|\le |Au| \quad\text{for all $u\in D(A)$}.
\end{equation}
Cf.\ the work of Janas~\cite{Jan94}.
Note that if both $A$, $A^*$ are hyponormal, then $A$ is normal.

This is a quite general class, but it fits most naturally into the present discussion:
For hyponormal $A$ we have $R(e^{-tA})\subset D(A)\subset D(A^*)$, which shows that $A^*e^{-tA}u_0$
is defined. Using this and hyponormality once more in \eqref{h''-id}, we get
\begin{equation}
  (h^2)''(t)\ge\ip{Au}{A^*u}+\ip{A^*u}{Au}+|Au|^2+|A^*u|^2 =|(A+A^*)e^{-tA}u_0|^2.
\end{equation} 
Now $(h^2)''>0$ follows for $u_0\ne0$ from injectivity of $e^{-tA}$ and of $A+A^*$; the latter
holds since $2a_{\Re}$ is $V$-elliptic. So $h^2$ is also strictly convex for hyponormal $A$. 

Also on the closed half-line with $t\ge0$ there is a result on non-selfadjoint dynamics.
Here we return to $h(t)$ itself and normalise, at no cost, to $|u_0|=1$ to get cleaner statements:

\begin{Proposition}  \label{sc-prop}
  Let $A$ denote a $V$-elliptic Lax--Milgram operator, defined from a triple $(H,V,a)$, such that $A$ is
  hyponormal, as above, or such that $A^2$ is accretive,  
  and let $u$ be the solution from Theorem~\ref{thm:Temam} for $f=0$ and $|u_0|=1$.
  Then $h(t)=|u(t)|$ is strictly decreasing and \emph{strictly convex} for $t\ge0$ and 
  differentiable from the right at $t=0$ with 
  \begin{equation}
    h'(0)=-\Re \ip{Au_0}{u_0} \qquad \text{for $u_0\in D(A)$},  
  \end{equation}
  and generally
\begin{equation}
  h'(0)\le -m(A).
\end{equation}
\end{Proposition}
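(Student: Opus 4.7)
I would bootstrap the strict convexity of $h^2$ already computed above to strict convexity of $h$ itself via the algebraic identity
\begin{equation}
4 h^3 h'' = 2 h^2 (h^2)'' - ((h^2)')^2,
\end{equation}
which is valid on $(0,T]$ because $u(t) = e^{-tA}u_0 \in \bigcap_n D(A^n)$ by analyticity of the semigroup (so that $h \in C^\infty(0,T]$) and $h(t) > 0$ throughout by Proposition~\ref{inj-prop}.

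In the case $m(A^2) \geq 0$, the excerpt yields $(h^2)'' \geq 2|Au|^2$, while Cauchy--Schwarz gives $((h^2)')^2 = 4(\Re\ip{Au}{u})^2 \leq 4|u|^2|Au|^2$, so $4h^3h'' \geq 0$. Saturation of both inequalities simultaneously forces $Au = \lambda u$ with $\lambda \in \R$; but then $\lambda \geq m(A) > 0$, and the discarded term $\Re\ip{A^2u}{u} = \lambda^2|u|^2$ is strictly positive, restoring $4h^3 h'' > 0$. In the hyponormal case, $(h^2)'' \geq |(A + A^*)u|^2$ from the excerpt, and $(h^2)' = -\ip{(A+A^*)u}{u}$, so another application of Cauchy--Schwarz gives $4h^3h'' \geq h^2|(A+A^*)u|^2$, with strictness coming from $V$-ellipticity of $2a_\Re$, which makes $A + A^*$ injective on $V \supset D(A)$ while $u \neq 0$. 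Continuity of $h$ at $t=0$ then promotes strict convexity from $(0,T]$ to $[0,T]$, since any line segment in the graph starting at $0$ would force $h'' = 0$ on a subinterval of $(0,T]$.

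For the remaining assertions, using the characterisation $m(A) = \inf\{\Re a(v,v) : v \in V,\ |v|=1\}$ (obtained by density of $D(A)$ in $V$), I derive the differential inequality
\begin{equation}
(h^2)'(t) = -2\Re a(e^{-tA}u_0, e^{-tA}u_0) \leq -2m(A) h^2(t) < 0,
\end{equation}
which gives strict decrease of $h$ on $[0,T]$ and, by Gr\"onwall, $h(t) \leq e^{-m(A)t}$. When $u_0 \in D(A)$, strong differentiability of the semigroup at $0$ yields $u(t) = u_0 - tAu_0 + o(t)$ in $H$, so $h^2(t) = 1 - 2t\Re\ip{Au_0}{u_0} + o(t)$, and a square-root expansion gives $h'(0) = -\Re\ip{Au_0}{u_0} \leq -m(A)$. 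For general $u_0 \in H$ with $|u_0|=1$, convexity makes $(h(t)-1)/t$ monotone as $t \downarrow 0$ with limit $h'(0^+)$, and the bound $h(t) \leq e^{-m(A)t}$ forces $h'(0^+) \leq -m(A)$. The main obstacle I anticipate is the accretive-case strictness, where Cauchy--Schwarz saturation must be ruled out by extracting the hidden positivity in $\Re\ip{A^2u}{u}$ via the eigenvector dichotomy; the hyponormal case is cleaner because $V$-ellipticity of $a_\Re$ yields injectivity of $A+A^*$ directly.
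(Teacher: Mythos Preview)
Your proof is correct and diverges from the paper's at the key step of strict convexity of $h$ on $(0,T]$. The paper asserts that $h' = (h^2)'/(2h)$ is strictly increasing from the two facts ``$(h^2)'$ is increasing'' and ``$h$ is strictly decreasing''; but this inference is not valid in general (e.g.\ $h(t)=(2-t)^{3/4}$ has $h^2$ strictly convex and strictly decreasing, yet $h$ is concave), so the paper's justification is at best elliptic. Your route via the identity $4h^3h'' = 2h^2(h^2)'' - ((h^2)')^2$ together with Cauchy--Schwarz on $(h^2)' = -2\Re\ip{Au}{u}$ (respectively $= -\ip{(A+A^*)u}{u}$ in the hyponormal case) exploits the \emph{specific} lower bounds $(h^2)'' \geq 2|Au|^2$ and $(h^2)'' \geq |(A+A^*)u|^2$ already established in the text, and gives $h'' > 0$ directly and rigorously. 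The eigenvector dichotomy you use to secure strictness in the accretive case is a clean way to close that gap.

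For the remaining assertions the two arguments run in parallel: the paper bounds the difference quotient $(h(t)-h(0))/t$ through the Mean Value Theorem and the formula $h'(\tau) = -\Re\ip{Au}{u}/|u|$, while you use the Gr\"onwall bound $h(t) \leq e^{-m(A)t}$ together with monotonicity of difference quotients from convexity; both give $h'(0^+)\le -m(A)$. For $u_0 \in D(A)$ the paper commutes $A$ with the semigroup in $h'(\tau)$ and lets $\tau\to0^+$, whereas you Taylor-expand $e^{-tA}u_0$; again equivalent. Your extension of strict convexity to $t=0$ by continuity matches the paper's.
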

\begin{Remark}
  The derivative $h'(0)$ might be $-\infty$ if $u_0\in H\setminus D(A)$.
\end{Remark}

\begin{proof}
By the convexity shown above, $(h^2)'$ is increasing.
Since $m(A)>0$ holds by the $V$-ellipticity, $h^2$ is strictly decreasing (and so is $h$) for $t>0$ as
  \begin{equation}
   (h^2)'(t)=-2\Re\ip{Ae^{-tA}u_0}{e^{-tA}u_0}\le-2m(A)|e^{-tA}u_0|^2<0.
  \end{equation}
These properties give that $h'=(h^2)'/(2\sqrt{h^2})$ is \emph{strictly} increasing for $t>0$, so the Mean Value Theorem
yields that $(h(t)-h(s))/(t-s)<(h(u)-h(t))/(u-t)$ for $0<s<t<u$; that is, $h$ is strictly convex
on $\,]0,\infty[\,$. 

The inequality $h((1-\theta)t+\theta s)\le (1-\theta)h(t)+\theta h(s)$,
$\theta\in\,]0,1[\,$ now extends by continuity to $t=0$. 
So does strict convexity of $h$, using twice that the slope function is increasing. 

 By convexity $h'$ is increasing for $t>0$, so $\lim_{t\to0^+} h'(t)=\inf h' \ge-\infty$.
  For each $0<s<1$ the continuity of $h$ yields $|e^{-tA}u_0|\ge s|u_0|=s$ for all sufficiently small $t\ge0$.
  By the above {formulas for $h'$ and $(h^2)'$ we have} $h'(t)=-\Re\ip{Ae^{-tA}u_0}{e^{-tA}u_0}/|e^{-tA}u_0|$,
  so the Mean Value Theorem gives for some $\tau\in\,]0,t[\,$,
  \begin{equation}
    t^{-1}(h(t)-h(0))=h'(\tau)\le -m(A) s<0. 
  \end{equation}
  Hence $h(0)> h(t)$ for all $t>0$. Moreover, the limit of $h'(\tau)$ was shown
  above to exist for $\tau\to0^+$, so $h'(0)$ exists in $[-\infty,-m(A)]$.  
  If $u_0\in D(A)$ we may commute $A$ with the semigroup in the formula for $h'(\tau)$, which by continuity gives
  $h'(0)=-\Re\ip{Au_0}{u_0}$.
  \end{proof}

Proposition~\ref{sc-prop} is a stiffness result for $u=e^{-tA}u_0$, due to strict convexity of 
$|e^{-tA}u_0|$. It is noteworthy that when $A\ne A^*$, then Proposition~\ref{sc-prop} gives conditions under which
the eigenvalues in $\C\setminus\R$ (if any) never lead to oscillations in the \emph{size} of the solution.

\begin{Remark}
Since $h'(0)$ is estimated in terms of the lower bound $m(A)$, it
is  the numerical range $\nu(A)$, rather 
than $\sigma(A)$, that controls \emph{short-time} decay of the solutions $e^{-tA}u_0$.
\end{Remark}

\begin{Remark}\label{A2acc-rem} In Proposition~\ref{sc-prop} we note that when $A^2$ is accretive, i.e., $m(A^2)\ge0$,
  then $A$ is  \emph{necessarily}  
sectorial with half-angle $\pi/4$; that is $\nu(A)\subset\big\{z\in\C\bigm| |\arg(z)|\le\pi/4\big\}$. This
may be seen as in \cite[Lem.~3]{Sho74}, where reduction to bounded operators was
made in order to invoke the operator monotonicity of the square root. 
\end{Remark}

\begin{Remark} \label{Show-rem}
We take the opportunity to point out an error in  (\cite[]{Sho74}, Lemma~3), where it incorrectly
was claimed that having half-angle $\pi/4$ also is sufficient for $m(A^2)\ge0$.   
A counter-example is available  already for $A$ in $\B(H)$ (if $\operatorname{dim} H\ge2$), as 
$A=X+\im Y$ for self-adjoint $X$, $Y\in\B(H)$:  
here $m(A)\ge0$ if and only if $X\ge0$, and we can arrange that $A$ has half-angle $\pi/4$,
that is $|\Im\ip{Av}{v}|\le\Re\ip{Av}{v}$ or $|\ip{Yv}{v}|\le\ip{Xv}{v}$, 
by designing $Y$ so that $-X\le Y\le X$.
Here we may take 
$Y=\delta X+\lambda_1 U$, where $\delta>0$ is small enough and $U$ is a partial isometry that interchanges two 
eigenvectors $v_1$, $v_2$ of $X$ with eigenvalues $\lambda_2>\lambda_1>0$, $U=0$ on
$H\ominus\Span(v_1,v_2)$. In fact, writing $v=c_1v_1+c_2v_2+v_\perp$ for $v_\perp\in
H\ominus\Span(v_1,v_2)$, since $v_1\perp v_2$, the above inequalities for $Y$ are equivalent to 
$2\lambda_1|\Re(c_1\bar c_2)|\le \lambda_1(1-\delta)|c_1|^2+(1-\delta)\lambda_2|c_2|^2
+(1-\delta)\ip{Xv_\perp}{v_\perp}$, which by the positivity of $X$ and Young's inequality is
implied by $1/(1-\delta)\le (1-\delta)\frac{\lambda_2}{\lambda_1}$, that is if $0<\delta\le
1-\sqrt{\lambda_1/\lambda_2}$. 
Now, $m(A^2)\ge0$ if and only if $|Xv|^2\ge |Yv|^2$ for all $v$ in $H$, but 
this will \emph{always} be violated, as one can see from
$|Yv|^2= \delta^2|Xv|^2+\lambda_1^2|Uv|^2+2\delta\lambda_1\Re\ip{Xv}{Uv}$ by inserting $v=v_1$, 
for the last term drops out as $v_1\perp v_2=Uv_1$, so that actually $|Yv_1|^2= (\delta^2+1)\lambda_1^2> |Xv_1|^2$.
Thus
$A= \left(\begin{smallmatrix}\lambda&0\\0&4\lambda\end{smallmatrix}\right) + i\lambda\left(\begin{smallmatrix}\delta& 1\\ 1 & 4\delta\end{smallmatrix}\right)$
is a counter-example in $\C^2$ for any $\lambda>0$, $0<\delta\le 1/2$.
\end{Remark}

\begin{Remark}
It is perhaps useful to emphasize the benefit from joining the two methods. Within semigroup theory
the ``mild solution'' given in \eqref{u-id}
is the only possible solution to \eqref{ivp-id}; but as
our class of solutions is larger, the extension of the old uniqueness argument in
Theorem~\ref{thm:repr_for_u} was needed. 
Existence of a solution is for analytic semigroups classical if 
$f\colon [0,T]\to H$ is H{\"o}lder continuous, cf.\ \cite[Cor.~4.3.3]{Paz83}.
Using functional analysis, this gap to the weaker condition $f \in L_2(0,T;V^*)$
is bridged by Theorem~\ref{thm:repr_for_u}, which states that the mild solution is indeed the
solution in the space of vector distributions in Theorem~\ref{thm:Temam}; albeit
at the expense that the generator $A$ is a $V$-elliptic Lax--Milgram operator. 
\end{Remark}

\section{Abstract Final Value Problems}\label{fvp-sect}
In this section, we show for a Lax--Milgram operator $\A$ that the final value problem 
\begin{equation}\label{eq:fvp}
\left\{
\begin{aligned}
  \partial_t u+ \A u &= f &&\quad\text{ in $\cal D'(0,T;V^*)$}, 
\\
  u(T) &=u_T &&\quad\text{ in $H$},
\end{aligned}
\right .
\end{equation}
is \emph{well-posed} when the final data belong to an appropriate space, to be identified
below. This is obtained via comparison with the initial value problem treated in Section~\ref{ivp-sect}.

\subsection{A Bijection From Initial to Terminal States}
\label{bijection-ssect}
According to Theorem~\ref{thm:Temam}, the solutions to the differential equation $u'+\A u=f$  are
for fixed $f$ parametrised by the initial states $u(0)\in H$. To study the terminal states
$u(T)$  we note that \eqref{u-id} yields 
\begin{align}\label{eq:u_T}
  u(T) = e^{-TA}u(0) + \int_0^T e^{-(T-s)\A}f(s) \,ds.
\end{align}

This representation of $u(T)$ is essential in what follows, as it gives a bijective correspondence
$u(0)\leftrightarrow u(T)$ between the initial and terminal states, as accounted for below.

First we analyse the integral term above by introducing the yield map $f\mapsto y_f$ given by
\begin{align}  \label{yf-id}
  y_f =  \int_0^T e^{-(T-s)\A}f(s) \,ds, \qquad f\in L_2(0,T;V^*). 
\end{align}
Clearly $y_f$ is a vector in $V^*$ by definition of the integral (and since $C([0,T];V^*)\subset
L_1(0,T;V^*)$). But actually it is in the smaller space $H$, 
for $y_f =u(T)$ holds in $H$ when $u$ is the solution for $u_0 =0$ of \eqref{ivp-id}, and then 
Corollary~\ref{cor:Temam} yields an estimate of $\sup_{t \in [0,T]} |u(t)|$ by the $L_2$-norm of $f$;
cf.~\eqref{eq:Xnorm}. \mbox{In particular, we have}
\begin{align}
  |y_f|  \leq c \|f\|_{L_2(0,T;V^*)}.
\label{eq:yfH}
\end{align}

Moreover, $f \mapsto y_f$ is by \eqref{eq:yfH} bounded $L_2(0,T;V^*)\to H$, and it has dense range
in $H$ containing all $x\in D(e^{\varepsilon A})$ for
every $\varepsilon>0$, for if in \eqref{yf-id} we insert the piecewise continuous function 
\begin{equation} \label{fe-id}
  f_\varepsilon(s)=\1_{[T-\varepsilon,T]}(s)e^{(T-\varepsilon-s)A}(\frac1\varepsilon e^{\varepsilon A}x),   
\end{equation}
then the semigroup property gives
$y_{f_\varepsilon}= \int_{T-\varepsilon}^T e^{-\varepsilon A}(\frac1\varepsilon e^{\varepsilon A}x)\,ds = 
       \frac1\varepsilon\int_{T-\varepsilon}^T  x\,ds= x$.
However, standard operator theory gives the optimal result, that is, surjectivity:

\begin{Proposition} \label{yf-prop}
  The yield map $f\mapsto y_f$ is in $\B(L_2(0,T;V^*),H)$ and it is surjective, $R(y_f)=H$.
  Its adjoint in $\B(H,L_2(0,T;V))$ is the orbit map given by $v\mapsto e^{-(T-\cdot)A^*}v$.
\end{Proposition}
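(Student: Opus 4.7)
The plan is to establish boundedness from the already-derived estimate, compute the Hilbert space adjoint by a standard duality/integration-by-parts argument in the Gelfand triple, and then conclude surjectivity from the Closed Range Theorem by showing that the adjoint is bounded below. Boundedness $L_2(0,T;V^*)\to H$ is exactly the content of \eqref{eq:yfH}, so no further work is needed there.

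To identify the adjoint, I would fix $v\in H$ and introduce the backward orbit $w(s):=e^{-(T-s)\A'}v$. Via the substitution $t=T-s$, Theorem~\ref{thm:Temam} applied to $\A'$ with initial datum $v\in H$ shows that $w\in X$; in particular $w\in L_2(0,T;V)\cap H^1(0,T;V^*)$. Let $u\in X$ be the solution of $u'+\A u=f$, $u(0)=0$, so that $u(T)=y_f$ by Theorem~\ref{thm:repr_for_u}. Lemma~\ref{lem:Leibniz_dual} then gives in $\D'(0,T)$
\begin{equation}
\partial_s\ip{u(s)}{w(s)}=\dualp{f(s)-\A u(s)}{w(s)}+\overline{\dualp{\A' w(s)}{u(s)}}=\dualp{f(s)}{w(s)},
\end{equation}
since $a(u,w)=\overline{a^*(w,u)}$ cancels the two middle terms. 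Integrating over $[0,T]$ with $u(0)=0$, $u(T)=y_f$ and $w(T)=v$, and invoking Lemma~\ref{lem:AA} to write $e^{-(T-s)\A'}v=e^{-(T-s)A^*}v$ for $v\in H$, yields
\begin{equation}
\ip{y_f}{v}=\int_0^T\dualp{f(s)}{e^{-(T-s)A^*}v}_{V^*,V}\,ds.
\end{equation}
This identifies the Hilbert adjoint as $v\mapsto e^{-(T-\cdot)A^*}v$, which is bounded $H\to L_2(0,T;V)$ by Corollary~\ref{cor:u-inequality} applied to $\A'$.

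For surjectivity I would invoke the fact that a bounded operator between Hilbert spaces has full range if and only if its adjoint is bounded below. The requisite lower bound follows by setting $t=0$ in Corollary~\ref{cor:u-inequality} applied to $\A'$:
\begin{equation}
T|v|^2\le C_5\int_0^T\|e^{-t\A'}v\|^2\,dt=C_5\,\|e^{-(T-\cdot)A^*}v\|_{L_2(0,T;V)}^2,
\end{equation}
where the equality uses the substitution $t=T-s$. Hence $\|(y_f)^*v\|_{L_2(0,T;V)}\ge (T/C_5)^{1/2}|v|$, so the adjoint has closed range and is injective; by the Closed Range Theorem, $R(y_f)=H$.

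The main obstacle is the adjoint calculation: one must verify cleanly that the Leibniz rule of Lemma~\ref{lem:Leibniz_dual} applies to the cross pairing $\ip{u}{w}$ when $u,w\in X$ and the equation is understood in $V^*$, and that the regularity $w\in L_2(0,T;V)$ of the backward orbit\,---\,even for merely $H$-valued terminal data $v$\,---\,is genuinely supplied by Corollary~\ref{cor:u-inequality}. Once these two ingredients are in place, the surjectivity conclusion is a routine application of the Closed Range Theorem with an explicit lower constant, and no appeal to density arguments or to the approximate surjectivity already noted via \eqref{fe-id} is required.
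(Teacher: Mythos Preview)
Your proof is correct and shares the same overall architecture as the paper's: identify the adjoint, then deduce surjectivity from the lower bound in Corollary~\ref{cor:u-inequality} (applied to $A^*$) via the Closed Range Theorem. The difference lies in how the adjoint is computed. The paper proceeds by a direct Bochner-identity calculation, first restricting to the dense subspace $f\in L_2(0,T;H)$ so that the integrand lies in $C([0,T];H)$, and then extending to all of $L_2(0,T;V^*)$ by density (using Remark~\ref{dual-rem} to identify the dual space). You instead take the solution $u\in X$ of $u'+\A u=f$, $u(0)=0$, pair it against the backward orbit $w(s)=e^{-(T-s)\A'}v\in X$, and use the Leibniz rule of Lemma~\ref{lem:Leibniz_dual} together with the cancellation $a(u,w)=\overline{a^*(w,u)}$ to obtain the duality identity directly for arbitrary $f\in L_2(0,T;V^*)$. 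This buys you a density-free argument, at the price of invoking Theorem~\ref{thm:Temam} (for $\A'$) and Lemma~\ref{lem:Leibniz_dual}. A minor bonus of your route is that the surjectivity step needs no separate appeal to the density shown via \eqref{fe-id}: the lower bound on the adjoint already gives both injectivity of the adjoint (hence density of $R(y_f)$) and closedness of $R(y_f)$ in one stroke, whereas the paper invokes \eqref{fe-id} for density and the lower bound only for closedness.
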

\begin{proof}
To determine the adjoint of $f\mapsto y_f$, we first calculate for $f\in L_2(0,T;H)$ so that
the integrand in \eqref{yf-id} belongs to $C([0,T];H)$. For $v\in H$ we get, using the  Bochner identity twice,
\begin{equation}
  \ip{y_f}{v} = \int_0^T \ip{e^{-(T-s)A}f(s)}{v} \,ds = \int_0^T \ip{f(s)}{e^{-(T-s)A^*}v} \,ds
  =\dual{f}{e^{-(T-s)A^*}v}. 
\end{equation}
The last scalar product makes sense because $s\mapsto e^{-(T-s)A^*}v$ is in $L_2(0,T;V)$, as
seen by  applying Corollary~\ref{cor:u-inequality} to the Lax--Milgram operator $A^*$, and
$L_2(0,T;V)$ is the dual space to $L_2(0,T;V^*)$; {cf.~Remark}~\ref{dual-rem} below. 
Since $L_2(0,T;H)$ is dense in $L_2(0,T;V^*)$, it follows by closure that the left- and
right-hand sides are equal for every $f\in L_2(0,T;V^*)$ and $v\in H$. Hence $v\mapsto
e^{-(T-\cdot)A^*}v$ is the adjoint of $y_f$. 

Applying Corollary~\ref{cor:u-inequality} to $A^*$ for $t=0$, a change of variables yields for every $v\in H$,
\begin{equation}
      |v|^2 \le \frac{C_5}{T}\int_0^T \| e^{-(T-s)A^*}v\|^2 \,ds.
\end{equation}
This estimate from below of the adjoint is equivalent to closedness of the range of $y_f$, as the range is  
dense by \eqref{fe-id}. This follows from the Closed Range Theorem; cf.\ \cite[Thm.~3.1]{johnsen:00}
for a general result \mbox{on this.} 
\end{proof}

\begin{Remark} \label{dual-rem}
{The Banach spaces} $L_2(0,T;V)$, $L_2(0,T;V^*)$ are in duality, and $L_2(0,T;V)^*$ identifies with $L_2(0,T;V^*)$: 
for each $\Lambda\in L_2(0,T;V)^*$ the inner product $a_{\Re}$ 
and Riesz' theorem yield $h\in L_2(0,T;V)$ that for $g\in L_2(0,T;V)$ fulfils
$\dual{\Lambda}{g}=\int_0^Ta_{\Re}(h,g)\,dt$; so $\dual{\Lambda}{g}=\int_0^T\dual{f}{g}\,dt$ for 
$f=\frac12(\A+\A')h$ in $L_2(0,T;V^*)$; cf.\ \eqref{eq:3norm} and \eqref{eq:calA}.
\end{Remark}

The surjectivity of $y_f$ can be shown in important cases using an explicit construction, which is
of interest in control theory (cf.\ Remark~\ref{yf-rem}), and given here for completeness: 

\begin{Proposition}  \label{Ryf-prop}
  If $A^*=A$ and $A^{-1}$ is compact, every $v\in H$ equals $y_f$ for some computable  $f\in  L_2(0,T;V^*)$.
\end{Proposition}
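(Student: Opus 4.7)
The plan is to diagonalise the problem via the spectral theorem and construct $f$ by a HUM-type ansatz that uses the semigroup itself. Since $A=A^*$ with compact $A^{-1}$, Section~\ref{sa-ssect} supplies an orthonormal basis $(e_j)$ of $H$ with $Ae_j=\lambda_j e_j$ and $0<\lambda_1\le\lambda_2\le\dots$\,, so the integrand in \eqref{yf-id} diagonalises via $e^{-(T-s)\A}e_j=e^{-(T-s)\lambda_j}e_j$.

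Motivated by the fact from Proposition~\ref{yf-prop} that the adjoint of $f\mapsto y_f$ is $v\mapsto e^{-(T-\cdot)A^*}v$, I would try
\[
  f(s)=e^{-(T-s)A}w,\qquad w=\sum_{j=1}^{\infty} a_j\,e_j,
\]
and match Fourier coefficients in $y_f=v$. Term by term one finds $y_f=\sum_j a_j\frac{1-e^{-2T\lambda_j}}{2\lambda_j}e_j$, so the unique admissible choice is
\[
  a_j=\frac{2\lambda_j\ip{v}{e_j}}{1-e^{-2T\lambda_j}},
\]
which is explicitly computable from the spectral data $\{\lambda_j,e_j\}$ and the Fourier coefficients of $v$.

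The essential step is to show $f\in L_2(0,T;V^*)$. Applying Fact~\ref{fact:2} to the partial sums $f_N(s)=\sum_{j\le N}a_j e^{-(T-s)\lambda_j}e_j$ and evaluating $\int_0^T e^{-2(T-s)\lambda_j}\,ds=(1-e^{-2T\lambda_j})/(2\lambda_j)$, I expect the orthogonal expansion to yield
\[
  \|f_N\|_{L_2(0,T;V^*)}^2=\sum_{j\le N}\lambda_j^{-1}|a_j|^2\cdot\frac{1-e^{-2T\lambda_j}}{2\lambda_j}=\sum_{j\le N}\frac{2|\ip{v}{e_j}|^2}{1-e^{-2T\lambda_j}}\le\frac{2|v|^2}{1-e^{-2T\lambda_1}}.
\]
Hence $(f_N)$ is Cauchy and converges to the desired $f\in L_2(0,T;V^*)$, and by continuity of $f\mapsto y_f$ (Proposition~\ref{yf-prop}) together with the termwise identity $y_{f_N}=\sum_{j\le N}\ip{v}{e_j}e_j$, passing to the limit gives $y_f=v$ in $H$.

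The main obstacle, and the reason for choosing this particular ansatz, is summability in $V^*$: with $s$-independent coefficients $c_j$ one would be forced to take $c_j\sim\lambda_j\ip{v}{e_j}$, leading to the divergent expression $\sum\lambda_j|\ip{v}{e_j}|^2$ for generic $v\in H$. The exponential factor $e^{-(T-s)\lambda_j}$ built into the ansatz is exactly what produces the extra $\lambda_j^{-1}$ upon squaring and integrating in $s$, which combined with the weight $\lambda_j^{-1}$ in the $V^*$-norm of Fact~\ref{fact:2} cancels both powers of $\lambda_j$ in $|a_j|^2$ and reduces the estimate to Parseval for $v\in H$.
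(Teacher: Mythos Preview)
Your argument is correct and in fact considerably simpler than the paper's. Both proofs diagonalise via the eigenbasis from Section~\ref{sa-ssect} and reduce $y_f=v$ to an infinite family of one-dimensional moment conditions, then verify $L_2(0,T;V^*)$-membership using the weight $\lambda_j^{-1}$ from Fact~\ref{fact:2}. The difference lies in the choice of $\beta_j(\cdot)$: you take $\beta_j(s)=a_j e^{-(T-s)\lambda_j}$, motivated by the adjoint description in Proposition~\ref{yf-prop}, and the time integration produces exactly the extra factor $\lambda_j^{-1}$ needed to reduce to Parseval; the paper instead uses $\beta_j(t)=k_j\,\1_{[\theta_j T,T]}(t)\exp(t(\sqrt{\lambda_j}-\lambda_j))$ with a carefully tuned cut-off $\theta_j=1-(\lambda_j-\sqrt{\lambda_j})^{-1}$ and then spends a page estimating the resulting expressions. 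What the paper's construction buys is the control-theoretic content of Remark~\ref{yf-rem}: the coefficients $\beta_j$ are supported in $[\theta_j T,T]$ with $\theta_j\nearrow1$, so the target $v$ can be reached by activating each spectral direction arbitrarily late. Your ansatz forfeits this localisation in time but gives a shorter proof with an explicit bound $\|f\|_{L_2(0,T;V^*)}^2\le\frac{2}{1-e^{-2T\lambda_1}}|v|^2$.

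One presentational point: the vector $w=\sum a_j e_j$ is in general not in $H$ (since $|a_j|\sim 2\lambda_j|\ip{v}{e_j}|$), so the formula $f(s)=e^{-(T-s)A}w$ is only heuristic. Your proof does not actually use it, working rigorously with the partial sums $f_N$ and the continuity of $f\mapsto y_f$; you should make that explicit to avoid confusion.
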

\begin{proof} 
Fact~\ref{fact:1} yields an ortonormal basis $(e_n)_{n\in\N}$ so that $Ae_n=\lambda_ne_n$,
hence any $v$ in $H$ fulfils $v = \sum_{j}\alpha_j e_j$ with $\sum_{j}|\alpha_j|^2<\infty$. 
By Fact~\ref{fact:2} every $f \in L_2(0,T;V^*)$ has an expansion 
\begin{equation}
  \label{eq:f_expansion}
  f(t) = \sum_{j=1}^{\infty} \beta_j(t) e_j = \sum_{j=1}^{\infty} \dualp{f(t)}{e_j} e_j
\end{equation}
converging in $V^*$ for $t$ a.e. Since $e^{-(T-s) \A}e_j = e^{-(T-s)\lambda_j} e_j$,
cf.~Remark~\ref{rem:bijektiv_onb}, such $f$ fulfill
\begin{align}
  y_f=\int_0^T e^{-(T-s) \A} f(s) \,ds 
  = \sum_{j=1}^{\infty}  e^{-T\lambda_j}(\int_0^T \beta_j(s)e^{s\lambda_j} \,ds) e_j.  
\end{align}
Hence $y_f = v$ is equivalent to the validity of
$\int_0^T\beta_j(s)e^{s\lambda_j}\,ds = \alpha_j e^{T\lambda_j}$ for $j\in\N$.
So if, in terms of some $\theta_j\in\,]0,1[\,$ to be determined, we take the coefficients of $f(t)$ as
\begin{equation} \label{betaj-id}
  \beta_j(t)=k_j\1_{[\theta_j T,T]}(t)\exp(t(\sqrt{\lambda_j}-\lambda_j)),
\end{equation}
then the condition will be satisfied if and only if 
$k_j=\alpha_je^{T\lambda_j}\sqrt{\lambda_j}(e^{T\sqrt{\lambda_j}}-e^{\theta_j T\sqrt{\lambda_j}})^{-1}$.

Moreover, using the equivalent norm $\vvvert\cdot\vvvert_{*}$ on $V^*$ in Fact~\ref{fact:2}, 
\begin{align}
  \|f\|^2_{L_2(0,T;V^*)}  = \int_0^T \vvvert f(t)\vvvert_{*}^2 \,dt =  \sum_{j=1}^{\infty}
  \lambda_j^{-1} \int_0^T |\beta_j(t)|^2 \,dt.
\end{align}
Therefore $f$ is in $L_2(0,T;V^*)$ whenever $\int_0^T|\beta_j|^2\,dt\leq C\lambda_j |\alpha_j|^2$ holds
eventually for some $C>0$, and here a direct calculation gives
\begin{equation}
  \int_0^T\frac{|\beta_j|^2}{|k_j|^{2}}\,dt=
  \frac{e^{2T(\sqrt{\lambda_j}-\lambda_j)}-e^{2\theta_j
      T(\sqrt{\lambda_j}-\lambda_j)}}{2\sqrt{\lambda_j}-2\lambda_j}
  =\frac{e^{2T\sqrt{\lambda_j}}
    (e^{2T(1-\theta_j)(\lambda_j-\sqrt{\lambda_j})}-1)}{2e^{2T\lambda_j}(\lambda_j-\sqrt{\lambda_j})}.
\end{equation}

So in view of the expression for $k_j$, the quadratic integrability of $f$ follows if the
$\theta_j$ can be chosen so that the above numerator is estimated by
$C(\lambda_j-\sqrt{\lambda_j})(e^{T\sqrt{\lambda_j}}-e^{\theta_j T\sqrt{\lambda_j}})^2$
with $C$ independent of $j\ge J$ for a suitable $J$, or more simply if
\begin{equation}
  e^{2T(1-\theta_j)(\lambda_j-\sqrt{\lambda_j})}-1
\leq C(\lambda_j-\sqrt{\lambda_j})(1-e^{-(1-\theta_j) T\sqrt{\lambda_j}})^2.
\end{equation}
We may take $J$ so that $\lambda_j >3$ for all $j\ge J$, since at most finitely
many eigenvalues do not fulfill this.
Then $\theta_j:=1-(\lambda_j-\sqrt{\lambda_j})^{-1}$ belongs to $\,]0,1[\,$, and the above
is reduced to
\begin{equation}
  \label{eq:asymp-condition}
  \exp({2T})-1 \leq C(\lambda_j-\sqrt{\lambda_j})(1-\exp(-\frac{T}{\sqrt{\lambda_j}-1}))^2.
\end{equation}
Applying the Mean Value Theorem to $\exp$ on $[-\frac{T}{\sqrt{\lambda_j}-1},0]$, we
obtain the inequality 
\begin{equation}
  (\lambda_j-\sqrt{\lambda_j})(1-\exp(\frac{-T}{\sqrt{\lambda_j}-1}))^2
  \geq 
  \exp(-\frac{2T}{\sqrt{3}-1})\frac{T^2\lambda_j}{\lambda_j-\sqrt{\lambda_j}}
  >\exp(-{4T})T^2>0.
\end{equation}
Hence  \eqref{eq:asymp-condition} is fulfilled for $C=\exp({6T})/T^2$.
\end{proof}

\begin{Remark} \label{yf-rem}
  In the above proof $\supp\beta_j\subset[\theta_j T,T]$, so the given $v$ can
  be attained by $y_f$ by arranging the coefficients $\beta_j$ in each dimension 
  \emph{successively} as time approaches $T$,
  as $\theta_j\nearrow1$ follows in \eqref{betaj-id} by counting the eigenvalues so that $\lambda_j\nearrow\infty$. 
  This can even be postponed to any given $T_0<T$, for $\supp \beta_j\subset [T_0,T]$ holds whenever
  $\theta_j T\ge T_0$, and we may reset to $\theta_j=T_0/T$ and adjust the $k_j$ accordingly, 
  for the finitely many remaining $j$.
  Both themes may be of interest in infinite dimensional control theory.
\end{Remark}

In order to isolate $u(0)$  in \eqref{eq:u_T}, it will of course be decisive that the
operator $e^{-TA}$ has an inverse, as was shown for general analytic semigroups in
Proposition~\ref{inj-prop}. 

For our Lax--Milgram operator $A$ with analytic semigroup $e^{-tA}$ generated by $\Gen=-A$, 
it is the symbol $e^{tA}$ that denotes the inverse, consistent with the sign convention in \eqref{eq:inverse}. 
Hence the properties of $e^{tA}$ can be read off from Proposition~\ref{inverse-prop}, where
\eqref{comm-eq} gives
\begin{equation}
  \label{eq:LMinclusion}
 e^{-tA}e^{TA}\subset e^{(T-t)A} \qquad\text{ for $0\le t\le T$}.
\end{equation}

Moreover, it is decisive for the interpretation of the compatibility conditions in
Section~\ref{wp-ssect} below to know that the domain inclusions in Proposition~\ref{inverse-prop} are
strict. We include a mild sufficient condition along with a characterisation of the domain $D(e^{tA})$.
 
\begin{Proposition} \label{domain-prop}
If $H$ has an orthonormal basis of eigenvectors $(e_j)_{j\in\N}$ of $A$ so that the
corresponding eigenvalues fulfil $\Re\lambda_j \rightarrow \infty$ for $j\to \infty$, then the 
inclusions in \eqref{eq:domain_inclusion} are both \emph{strict}, and $D(e^{tA})$ is the
completion of $\Span(e_j)_{j\in\N}$ with respect to the graph norm, 
\begin{align}\label{eq:graph_norm_onb}
  \|x \|^2_{D(e^{tA})} = \sum_{j=1}^{\infty} (1+ e^{2\Re\lambda_j t})|\ip{x}{e_j}|^2.
\end{align}
The domain $D(e^{tA})$ equals the subspace $S\subset H$ in which the right-hand side
is finite.
\end{Proposition}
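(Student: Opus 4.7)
My plan is to reduce everything to a concrete Fourier-type description relative to the basis $(e_j)$, exploiting the semigroup action on eigenvectors.

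First I would observe from Remark~\ref{rem:bijektiv_onb} that each $e_j$ lies in $D(e^{tA})=R(e^{-tA})$, since $e^{-tA}(e^{t\lambda_j}e_j)=e_j$, so $e^{tA}e_j=e^{t\lambda_j}e_j$. Next I would verify that $A^*$ has the same eigenvectors with complex-conjugate eigenvalues: for $u\in D(A)$ with expansion $u=\sum c_je_j$, the identity $\ip{Au}{e_k}=\lambda_k c_k=\ip{u}{\bar\lambda_ke_k}$ shows $e_k\in D(A^*)$ with $A^*e_k=\bar\lambda_k e_k$, and consequently $e^{-tA^*}e_j=e^{-t\bar\lambda_j}e_j$.

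The core step is then the Fourier characterisation of $e^{tA}$. Given $x\in D(e^{tA})$, set $y=e^{tA}x\in H$, so $x=e^{-tA}y$. Using that $(e^{-tA})^*=e^{-tA^*}$,
\begin{equation}
   \ip{x}{e_j}=\ip{e^{-tA}y}{e_j}=\ip{y}{e^{-tA^*}e_j}=e^{-t\lambda_j}\ip{y}{e_j},
\end{equation}
which forces $\ip{y}{e_j}=e^{t\lambda_j}\ip{x}{e_j}$, and Parseval gives $\sum e^{2t\Re\lambda_j}|\ip{x}{e_j}|^2=|y|^2<\infty$. Conversely, if this series converges, define $y:=\sum e^{t\lambda_j}\ip{x}{e_j}e_j\in H$; applying the bounded operator $e^{-tA}$ termwise (continuity plus the eigenrelation) yields $e^{-tA}y=x$, so $x\in D(e^{tA})$ with $e^{tA}x=y$. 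This gives the characterisation $D(e^{tA})=S$ and the graph-norm identity $\|x\|^2_{D(e^{tA})}=|x|^2+|y|^2=\sum(1+e^{2\Re\lambda_jt})|\ip{x}{e_j}|^2$ at once.

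For the completion statement, the partial sums $x_N=\sum_{j\le N}\ip{x}{e_j}e_j\in\Span(e_j)$ approximate any $x\in D(e^{tA})$ in the graph norm, because the tail $\sum_{j>N}(1+e^{2\Re\lambda_jt})|\ip{x}{e_j}|^2$ tends to $0$; since $e^{tA}$ is closed (as the inverse of the bounded operator $e^{-tA}$, or by Proposition~\ref{inverse-prop}), the graph norm is complete, so $D(e^{tA})$ is the completion of $\Span(e_j)$ in it. Finally, for strictness of the inclusions in \eqref{eq:domain_inclusion}, I would exploit $\Re\lambda_j\to\infty$ to manufacture coefficient sequences on the verge of summability: extracting a subsequence $(\lambda_{j_k})$ with $\Re\lambda_{j_k}\ge k$ and setting $c_{j_k}=e^{-t\Re\lambda_{j_k}}/k$ (zero otherwise) produces $x=\sum c_{j_k}e_{j_k}\in D(e^{tA})\setminus D(e^{t'A})$ for $t<t'$, since $\sum|c_{j_k}|^2e^{2t\Re\lambda_{j_k}}=\sum k^{-2}<\infty$ while $\sum|c_{j_k}|^2e^{2t'\Re\lambda_{j_k}}=\sum k^{-2}e^{2(t'-t)\Re\lambda_{j_k}}=\infty$; and the purely $H$-valued choice $c_{j_k}=1/k$ on such a subsequence shows $D(e^{tA})\subsetneq H$.

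The only delicate point I anticipate is rigorously passing the scalar identity $\ip{x}{e_j}=\ip{y}{e^{-tA^*}e_j}$ through the adjoint; one has to be careful that $e^{-tA^*}=(e^{-tA})^*$ and that the eigen-identity for $A^*$ is genuinely obtained without extra hypotheses, but once the argument above for $A^*e_k=\bar\lambda_ke_k$ is in place, the rest is bookkeeping with absolutely convergent series.
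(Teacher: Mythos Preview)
Your proof is correct and structurally parallels the paper's: characterise $D(e^{tA})$ as the set $S$ via Fourier coefficients, deduce the graph-norm formula, approximate by partial sums for the completion statement, and build a subsequence-based example for strictness. The paper's strictness example is essentially the same as yours (it uses $c_{j_n}=\tfrac1n e^{-t\lambda_{j_n}}$ with the full complex eigenvalue, but the computation is identical).

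The one genuine difference is in the Fourier step. You compute $\ip{x}{e_j}$ by passing to the adjoint semigroup, which forces you to first establish $A^*e_k=\bar\lambda_k e_k$. The paper avoids this detour entirely: since $e^{-tA}\in\B(H)$, one simply expands $y=\sum\ip{y}{e_j}e_j$ and applies $e^{-tA}$ termwise by continuity to get $x=e^{-tA}y=\sum\ip{y}{e_j}e^{-t\lambda_j}e_j$, whence $\ip{x}{e_j}=e^{-t\lambda_j}\ip{y}{e_j}$ directly. This is both shorter and sidesteps precisely the ``delicate point'' you flag. Indeed, your one-line justification ``for $u\in D(A)$ with expansion $u=\sum c_je_j$, the identity $\ip{Au}{e_k}=\lambda_k c_k$'' is not quite complete: it tacitly assumes $A$ acts diagonally on arbitrary $u\in D(A)$, which does not follow immediately from $Ae_j=\lambda_j e_j$ for an unbounded $A$. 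The claim is true here, but the clean way to see it is to use that $A^{-1}\in\B(H)$ acts diagonally by continuity, so $\ip{u}{e_k}=\ip{A^{-1}Au}{e_k}=\lambda_k^{-1}\ip{Au}{e_k}$; you should insert this, or better, bypass the adjoint altogether as the paper does.
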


\begin{proof}
If $x \in S$ the vector 
$v =  \sum_{j=1}^{\infty} e^{\lambda_j t} \ip{x}{e_j} e_j$  is well defined in $H$, and 
with methods from Remark~\ref{rem:bijektiv_onb} it follows that $e^{-tA}v=x$; i.e.\ 
$x\in D(e^{tA})$.

Conversely, for $x \in D(e^{tA})$ there is a vector $y \in H$ such that 
$x = e^{-tA}y = \sum_{j=1}^{\infty} \ip{y}{e_j} e^{-t\lambda_j} e_j$.
That is, $e^{\lambda_j t} \ip{x}{e_j} = \ip{y}{e_j} \in \ell_2  $, so $x \in S$.
Then $|e^{tA}x|^2=\sum e^{2\Re\lambda_j t}|\ip{x}{e_j}|^2$ yields
\eqref{eq:graph_norm_onb}.

Now any $x \in D(e^{t'A})$ is also in $D(e^{tA})$ for $t<t'$, 
since $\Re\lambda_j > 0$ holds in \eqref{eq:graph_norm_onb} for all $j$ by $V$-ellipticity.
As $\Re\lambda_j \rightarrow \infty$,  we may choose a subsequence so that 
$\Re\lambda_{j_n}> n$ and set
\begin{align}
  x = \sum_{n=1}^{\infty} \frac1{n} e^{-\lambda_{j_n}t} e_{j_n}.
\end{align}
Here $x\in D(e^{t A})$ as it is in $S$ by construction for $t\ge0$;
but not in $D(e^{t' A})$ for $t'>t$ as
\begin{align}
  \sum_{j=1}^{\infty} e^{2\Re \lambda_j t' } |\ip{x}{e_j}|^2 =  
  \sum_{n=1}^{\infty} e^{2\Re \lambda_{j_n} (t'- t) }\frac1{n^2} > 
  \sum_{n=1}^{\infty} \frac{e^{2n (t'- t) }}{n^2} = \infty .
\end{align}

Furthermore, using orthogonality, it follows
for any $x \in D(e^{tA})$ that, for $N \rightarrow \infty$,
\begin{align}
 \big\|x - \sum_{j\leq N} \ip{x}{e_j} e_j\big\|_{D(e^{tA})}^2 
  =\sum_{J>N} (1+ e^{2\Re\lambda_j t})|\ip{x}{e_j}|^2
  \rightarrow 0 .
\end{align}
Hence the space $D(e^{tA})$ has $ \Span(e_j)_{j\in\N}$ as a dense subspace.
That is, the completion of the latter with respect to the graph norm identifies with the former.
\end{proof}

After this study of the map $y_f$, the injectivity of the operator $e^{-tA}$ and the domain
$D(e^{tA})$, cf.\ Propositions~\ref{inj-prop}, \ref{inverse-prop}, \ref{yf-prop} and \ref{domain-prop}, 
we address the final value problem
\eqref{eq:fvp} by solving \eqref{eq:u_T} for the vector $u(0)$. This is done by considering the map
\begin{align} 
  u(0) \mapsto  e^{-TA} u(0) +y_f.
\end{align}
This is composed of the bijection $e^{-TA}$ and a translation by the vector $y_f$, 
hence is bijective from $H$ to the affine space $R(e^{-TA}) + y_f$. 
In fact, using \eqref{eq:inverse}, inversion gives
\begin{align}   \label{eq:u_0_from_u_T}
   u(0) = e^{TA}\bigl(u(T) - \int_0^T e^{-(T-s)\A}f(s) \,ds\bigr) = e^{TA}(u(T) - y_f).
\end{align}

This may be summed up thus:

\begin{Theorem}\label{thm:bijection}
For the set of solutions $u$ in $X$ of the differential equation $(\partial_t +\A)u=f$ with 
fixed data $f \in L_2(0,T; V^*)$, the formulas \eqref{eq:u_T} {and} \eqref{eq:u_0_from_u_T} 
give a bijective correspondence between the initial states $u(0)$ in $H$ 
and the terminal states $u(T)$ in $y_f +D(e^{TA})$. 
\end{Theorem}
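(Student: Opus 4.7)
The plan is to assemble the statement from pieces already in place: the variation of constants formula from Theorem~\ref{thm:repr_for_u}, the injectivity of analytic semigroups from Proposition~\ref{inj-prop}, the convention $D(e^{TA})=R(e^{-TA})$ from \eqref{eq:inverse}, and the fact that $y_f\in H$ for every $f\in L_2(0,T;V^*)$. The argument splits naturally into three pieces: well-definedness of $u(0)\mapsto u(T)$ into $y_f+D(e^{TA})$, injectivity, and surjectivity onto this affine set.

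First I would invoke Theorem~\ref{thm:Temam} to parameterise the solution set by $u(0)\in H$ with $f$ held fixed, and then Theorem~\ref{thm:repr_for_u} to evaluate the variation of constants formula at $t=T$, giving $u(T)=e^{-TA}u(0)+y_f$, which is precisely \eqref{eq:u_T}. Since $e^{-TA}u(0)$ lies in $R(e^{-TA})=D(e^{TA})$ by the convention recalled just after Proposition~\ref{inverse-prop}, one has $u(T)-y_f\in D(e^{TA})$, so the map lands in $y_f+D(e^{TA})$ as claimed.

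For injectivity, if two initial states $u(0),u'(0)\in H$ produce the same $u(T)$, then $e^{-TA}(u(0)-u'(0))=0$, whence Proposition~\ref{inj-prop} forces $u(0)=u'(0)$. For surjectivity onto $y_f+D(e^{TA})$, given any $u_T$ in this affine set, the vector $u_T-y_f$ belongs to $D(e^{TA})=R(e^{-TA})$ and hence admits a unique preimage $u(0)=e^{TA}(u_T-y_f)\in H$ by injectivity; this is exactly the formula \eqref{eq:u_0_from_u_T}. Feeding this $u(0)$ into Theorem~\ref{thm:Temam} yields a solution $u\in X$, and evaluating its representation from Theorem~\ref{thm:repr_for_u} at $t=T$ recovers $u(T)=u_T$.

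There is no substantive obstacle here, since the non-trivial analytic content, namely injectivity of $e^{-TA}$ and the rigorous derivation of \eqref{eq:u_T}, has already been carried out. The theorem is essentially a bookkeeping synthesis: the affine translation by $y_f$ shifts the bijection $e^{-TA}\colon H\to D(e^{TA})$ into the correspondence $u(0)\leftrightarrow u(T)$, with \eqref{eq:u_T} and \eqref{eq:u_0_from_u_T} making the two directions explicit.
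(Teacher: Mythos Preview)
Your proposal is correct and follows essentially the same approach as the paper. The paper's argument preceding the theorem statement is simply the observation that the map $u(0)\mapsto e^{-TA}u(0)+y_f$ is the composite of the bijection $e^{-TA}\colon H\to R(e^{-TA})=D(e^{TA})$ (via Proposition~\ref{inj-prop}) with a translation by $y_f$, hence bijective from $H$ onto $y_f+D(e^{TA})$; your separate verification of well-definedness, injectivity, and surjectivity just unpacks this composite-of-bijections remark explicitly.
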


In view of the linearity, the affine space $y_f+D(e^{TA})$ might seem surprising. However, a suitable
reinterpretation gives the compatibility condition introduced in the next section.

\subsection{Well-Posedness of the Final Value Problem}
\label{wp-ssect}

Since $R(e^{TA})\subset H$, the initial state in \eqref{eq:u_0_from_u_T} can be inserted into formula
\eqref{u-id}, so any solution $u$ of \eqref{eq:fvp} must satisfy
\begin{align}  \label{eq:fvp_solution} 
   u(t) = e^{-tA} e^{TA}(u_T - y_f) + \int_0^t e^{-(t-s)\A}f(s) \,ds.
\end{align}
Here one could contract the first term a bit, as $e^{-tA} e^{TA} \subset e^{(T-t)A}$ by \eqref{eq:LMinclusion}.
But we refrain from this because $e^{-tA} e^{TA}$ rather obviously applies to $u_T-y_f$ if and only if
this vector belongs to $D(e^{TA})$\,---\,and the following theorem corroborates that this is
equivalent to the unique solvability in $X$ of the final value problem \eqref{eq:fvp}:

\begin{Theorem}\label{fvp-thm}
Let $V$ be a separable Hilbert space contained algebraically, topologically and densely in $H$, and
let $A$ be the Lax--Milgram operator defined in $H$ from a bounded $V$-elliptic sesquilinear form $a$, and having 
bounded extension $\A\colon V \rightarrow V^*$. For given $f\in L_2(0,T;V^*)$ and $u_T \in H$, the condition 
\begin{align}  \label{eq:comp_cond}
  u_T - y_f \in D(e^{TA})
\end{align}
is necessary and sufficient for the existence of some  $u\in X$, cf.~\eqref{eq:X},
that solves the final value problem \eqref{eq:fvp}.
Such a function $u$ is uniquely determined and given by \eqref{eq:fvp_solution}, where all terms
belong to $X$ as functions of $t$.
\end{Theorem}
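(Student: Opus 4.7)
The plan is to split the proof into necessity, sufficiency and uniqueness, using the bijective correspondence established in Theorem~\ref{thm:bijection} together with the representation formula in Theorem~\ref{thm:repr_for_u} as the essential machinery; almost all technical work has been done in the earlier sections, so the argument is chiefly one of assembly.

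For necessity, I would take any $u\in X$ that solves the final value problem \eqref{eq:fvp}. Since the equation $\partial_t u+\A u = f$ is the same as in the initial value problem \eqref{ivp-id}, and since by Lemma~\ref{lem:Sobolev} (or the definition of $X$) the value $u(0)\in H$ makes sense, I can invoke Theorem~\ref{thm:Temam} with initial data $u_0:=u(0)$ to conclude that $u$ coincides with the unique solution of \eqref{ivp-id}, and hence by Theorem~\ref{thm:repr_for_u} admits the representation $u(t)=e^{-tA}u(0)+\int_0^t e^{-(t-s)\A}f(s)\,ds$. Evaluating at $t=T$ gives $u_T-y_f=e^{-TA}u(0)\in R(e^{-TA})=D(e^{TA})$, which is \eqref{eq:comp_cond}.

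For sufficiency, suppose \eqref{eq:comp_cond} holds. Then the vector $u_0:=e^{TA}(u_T-y_f)\in H$ is well defined. I apply Theorem~\ref{thm:Temam} to the initial value problem with this $u_0$ and the given $f$, obtaining a unique $u\in X$ which, by Theorem~\ref{thm:repr_for_u}, is given by
\begin{equation}
  u(t)=e^{-tA}u_0+\int_0^t e^{-(t-s)\A}f(s)\,ds=e^{-tA}e^{TA}(u_T-y_f)+\int_0^t e^{-(t-s)\A}f(s)\,ds,
\end{equation}
which is precisely \eqref{eq:fvp_solution}. Evaluation at $t=T$, combined with $e^{-TA}e^{TA}=I$ on $D(e^{TA})$, yields $u(T)=(u_T-y_f)+y_f=u_T$, so $u$ is indeed a solution of \eqref{eq:fvp}. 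Membership of each term in $X$ follows because, again by Theorem~\ref{thm:repr_for_u}, both $e^{-tA}u_0$ and the Duhamel integral are individually elements of $X$.

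For uniqueness, if $u_1,u_2\in X$ both solve \eqref{eq:fvp}, then $w:=u_1-u_2\in X$ satisfies $w'+\A w=0$ and $w(T)=0$. By the necessity part above, $w(0)=e^{TA}(w(T)-y_0)=0$, and the uniqueness assertion of Theorem~\ref{thm:Temam} for the initial value problem forces $w\equiv 0$. Alternatively one can invoke the bijectivity in Theorem~\ref{thm:bijection} directly: for fixed $f$ the map $u(0)\mapsto u(T)$ is injective, so $u(T)=u_T$ determines $u(0)$, and hence $u$, uniquely.

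I do not anticipate a genuine obstacle here: the only subtlety is to verify that the composition $e^{-tA}e^{TA}$ is literally applicable to $u_T-y_f$ (which is exactly the content of \eqref{eq:comp_cond} and the inclusion \eqref{eq:LMinclusion}), and that at $t=T$ one may cancel $e^{-TA}e^{TA}$ to the identity on its domain; both points are granted by Propositions~\ref{inj-prop} and~\ref{inverse-prop}.
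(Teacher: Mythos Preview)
Your proposal is correct and follows essentially the same route as the paper's proof: necessity via the representation formula (the paper phrases this through the bijection of Theorem~\ref{thm:bijection}, which packages the same content), sufficiency by constructing $u_0=e^{TA}(u_T-y_f)$ and solving the forward problem via Theorem~\ref{thm:Temam}, and $X$-membership of the individual terms from Theorem~\ref{thm:repr_for_u}. The only cosmetic difference is that the paper deduces uniqueness directly from the explicit right-hand side of \eqref{eq:fvp_solution}, whereas you argue via the difference $w=u_1-u_2$; both are immediate.
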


\begin{proof}
When \eqref{eq:fvp} has a solution $u \in X$, then $u_T$ is reachable from
the initial state $u(0)$ determined from the bijection in Theorem~\ref{thm:bijection}, which 
gives that $u_T-y_f = e^{-TA} u(0)\in D(e^{TA})$. Hence \eqref{eq:comp_cond}
is necessary and \eqref{eq:fvp_solution} follows by insertion, as explained prior to
\eqref{eq:fvp_solution}. Uniqueness is obvious from the right-hand side of \eqref{eq:fvp_solution}. 

When $u_T$, $f$ fulfill \eqref{eq:comp_cond},
then $u_0 = e^{TA}(u_T - y_f)$ defines a vector in $H$, so 
Theorem~\ref{thm:Temam} yields a
function $u\in X$ solving $(\partial_t +\A)u = f$ and $u(0)=u_0$. 
According to Theorem~\ref{thm:bijection} 
this $u$ has final state $u(T)=e^{-TA}e^{TA}(u_T-y_f)+y_f=u_T$, hence solves \eqref{eq:fvp}.

Finally, the fact that the integral in \eqref{eq:fvp_solution} defines a function in $X$
follows at once from Theorem~\ref{thm:repr_for_u}, for it states that it equals the solution 
in $X$ of $\tilde u'+\A\tilde u=f$, $\tilde u(0)=0$.
Since $u\in X$ in \eqref{eq:fvp_solution}, also $e^{-tA} e^{TA}(u_T - y_f)$ is a function in $X$.
\end{proof}

\begin{Remark}\label{u(0)-rem}
When $(f,u_T)$ fulfils \eqref{eq:comp_cond}, then \eqref{eq:u_0_from_u_T}
yields that $u_T -y_f = e^{-TA} u(0)$. 
\end{Remark}

\begin{Remark}
Writing condition \eqref{eq:comp_cond} as $u_T = e^{-TA} u(0) + y_f$,
cf.~Remark~\ref{u(0)-rem}, 
this part of Theorem~\ref{fvp-thm} is natural inasmuch as each set of admissible terminal data $u_T$ are in
effect a sum of the terminal state, $e^{-TA}u(0)$, of the semi-homogeneous initial value
problem \eqref{ivp-id} with $f=0$ and of the terminal state $y_f$ of the
semi-homogeneous problem \eqref{ivp-id} with $u(0)=0$.  Moreover, the $u_T$ fill at least a dense set in $H$, as
for fixed $u(0)$ this follows from Proposition~\ref{yf-prop}; for fixed $f$ from the density of
$D(e^{TA})$ seen prior to Proposition~\ref{inverse-prop}.
\end{Remark}

\begin{Remark}   \label{PA-rem}
To elucidate the criterion $u_T- y_f \in D(e^{TA})$ in formula \eqref{eq:comp_cond} of
Theorem~\ref{fvp-thm}, we consider the matrix operator 
$P_{\A}=\left(\begin{smallmatrix}
  \partial_t+\A\\ r_T \end{smallmatrix}\right)$,
with $r_T$ denoting restriction at $t=T$, and the ``forward'' map $\Phi(f,u_T)=u_T-y_f$, which by \eqref{eq:Xnorm} and 
Proposition~\ref{yf-prop} give bounded operators 
\begin{equation}
 X \xrightarrow[~]{\quad P_{\A}\quad}
\begin{matrix}
  L_2(0,T;V^*)\\
  \oplus \\
  H
\end{matrix} 
\xrightarrow[~]{\quad \Phi \quad}  H.
\end{equation}
Then, in terms of the range $R(P_{\A})$, clearly \eqref{eq:fvp} has a solution if and only if
$
\left(\begin{smallmatrix}
f \\
u_T
\end{smallmatrix}\right)
\in R (P_{\A})
$,
so the compatibility condition \eqref{eq:comp_cond} means that 
$R(P_{\A}) = \Phi^{-1}(D(e^{TA}))=D(e^{TA} \Phi)$.
\end{Remark}

The paraphrase at the end of Remark~\ref{PA-rem} is convenient for the choice of a useful norm on the data.
Indeed, we now introduce the space of admissible data $Y = D(e^{TA} \Phi)$, i.e.
\begin{align}  \label{Y-id}
  Y = \Big\{ (f,u_T) \in L_2(0,T;V^*) \oplus H  \Bigm|  u_T - y_f \in D(e^{TA}) \Big\},
\end{align}
endowed with the graph norm on $D(e^{TA} \Phi)$ given by
\begin{align} \label{eq:normY}
  \| (f, u_T) \|_Y^2= |u_T|^2 + \|f\|^2_{L_2(0,T;V^*)} + |e^{TA}(u_T - y_f )|^2.
\end{align}
Using the equivalent norm $\vvvert\cdot\vvvert_*$ from \eqref{ipV*-id} for $V^*$, 
the above is induced by the inner product
\begin{equation}  \label{eq:ipY}
  \ip{u_T}{v_T} + \int_0^T \ip{f(s)}{g(s)}_{V^*}\,ds 
  + \ip{e^{TA}(u_T-y_f)}{e^{TA}(v_T-y_g)}.
\end{equation}
This space $Y$ is \emph{complete}: 
as $\Phi$ in Remark~\ref{PA-rem} is bounded, the composite map $e^{TA}\Phi$ is a
closed operator from $L_2(0,T;V^*)\oplus H$ to $H$, so its domain $D(e^{TA}\Phi)=Y$ is complete
with respect to the graph norm given in \eqref{eq:normY}. Hence $Y$ is a Hilbert(-able) space---but we
shall often just work with the equivalent norm on the Banach space $Y$ obtained by using simply $\|\cdot\|_*$ on $V^*$.

Moreover, the norm in \eqref{eq:normY} 
also leads to continuity of the solution operator for \eqref{eq:fvp}:

\begin{Theorem}\label{wp-thm}
The solution $u\in X$ in Theorem~\ref{fvp-thm} depends continuously on the data $(f,u_T)$ in the
Hilbert space $Y$ in \eqref{Y-id}, or  equivalently, for some constant $c$ we have 
\begin{multline} \label{wp-inequality}
  \int_0^T \|u(t)\|^2 \,dt + \sup_{t \in [0,T]} |u(t)|^2 + \int_0^T \|\partial_t u(t)\|^2_{*} \,dt
\\
  \le |u_T|^2 + c\bigg(\int_0^T\|f(t)\|^2_{*}\,dt + \big|e^{TA}(u_T - \int_0^T e^{-(T-t)\A}f(t)\,dt) \big|^2\bigg).
\end{multline}
Another equivalent norm on the Hilbert space $Y$ is obtained by omitting the term $|u_T|^2$.
\end{Theorem}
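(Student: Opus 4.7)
The plan is to derive the stability estimate by splitting the solution into two pieces via formula \eqref{eq:fvp_solution} and applying the well-posedness results for the initial value problem from Section~\ref{ivp-sect}. Concretely, I would write $u = v + w$, where
\[
  v(t) = e^{-tA} e^{TA}(u_T - y_f), \qquad w(t) = \int_0^t e^{-(t-s)\A}f(s)\,ds.
\]
By the compatibility condition \eqref{eq:comp_cond} the vector $u_0^* := e^{TA}(u_T - y_f)$ lies in $H$, so Theorem~\ref{thm:Temam} identifies $v$ with the unique $X$-solution of the IVP with initial datum $u_0^*$ and zero source, and $w$ with the unique $X$-solution of the IVP with zero initial datum and source $f$.

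Next I would apply Corollary~\ref{cor:Temam} separately to each auxiliary problem, obtaining
\[
  \|v\|_X^2 \le c\,|e^{TA}(u_T - y_f)|^2, \qquad \|w\|_X^2 \le c\,\|f\|_{L_2(0,T;V^*)}^2.
\]
The triangle inequality then yields $\|u\|_X^2 \le 2(\|v\|_X^2 + \|w\|_X^2)$, which after using that the left-hand side of \eqref{wp-inequality} is dominated by $\|u\|_X^2$ (the $\int_0^T\|u\|_*^2\,dt$ piece is absorbed via \eqref{eq:VV*emb}), gives the inequality \eqref{wp-inequality}; the extra $|u_T|^2$ on its right-hand side is harmless. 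This at once translates into continuity $Y \to X$ of the solution operator, the two being equivalent formulations.

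For the last claim, that $|u_T|^2$ may be dropped from \eqref{eq:normY} yielding an equivalent norm on $Y$, one direction is trivial and for the other I would use the decomposition $u_T = (u_T - y_f) + y_f$. Here $|y_f| \le c\|f\|_{L_2(0,T;V^*)}$ by \eqref{eq:yfH}, whilst $e^{-TA}$ is bounded on $H$ (it is the value at $t=T$ of the bounded analytic semigroup furnished by Lemma~\ref{Agen-lem} and Theorem~\ref{Pazy-thm}), so $|u_T - y_f| = |e^{-TA}(e^{TA}(u_T-y_f))| \le M|e^{TA}(u_T-y_f)|$. Combining these two bounds, $|u_T|$ is controlled by a constant times the square root of the sum of the remaining two squared terms.

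There is no real obstacle to this plan, since every ingredient -- the solution formula \eqref{eq:fvp_solution}, the IVP stability estimate, the $H$-boundedness of the analytic semigroup, and the bound \eqref{eq:yfH} on $y_f$ -- has already been established in the preceding sections; the only observation needed is that \eqref{eq:fvp_solution} exhibits $u$ precisely as the superposition of the two IVP solutions that Corollary~\ref{cor:Temam} controls.
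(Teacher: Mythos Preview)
Your proof is correct and follows essentially the same route as the paper. The paper is marginally more direct: rather than splitting $u=v+w$ and applying Corollary~\ref{cor:Temam} twice, it observes that $u$ itself is the IVP solution with data $(f,\,e^{TA}(u_T-y_f))$ and invokes \eqref{ivp-est} once; your treatment of the equivalent-norm claim via boundedness of $y_f$ and $e^{-TA}$ matches the paper's exactly.
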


\begin{proof}
This follows from Corollary~\ref{cor:Temam} by inserting $u_0 = e^{TA}(u_T -y_f)$ from \eqref{eq:u_0_from_u_T} into
\eqref{ivp-est}, for this gives
$\|u\|^2_X  \leq c |e^{T A}(u_T -y_f)|^2 + c\|f\|^2_{L_2(0,T;V^*)}$, where one can 
add $|u_T|^2$.
Conversely the boundedness of $y_f$ and $e^{-TA}$ yield that $|u_T|^2\le c\|f\|^2+c| e^{TA}(u_T-y_f)|^2$.
\end{proof}

Of course, Theorem~\ref{fvp-thm} and 
Theorem~\ref{wp-thm} together 
mean that the final value problem in \eqref{eq:fvp} is well posed in the spaces $X$ and $Y$.

\section{The Heat Equation With Final Data}\label{heat-sect}
To apply the theory in Section~\ref{fvp-sect}, we treat the heat equation and its final value problem.  
In the sequel $\Omega$ stands for a smooth, open bounded set in $\Rn$,
$n\ge2$ as described in \cite[App.~C]{G09}. In particular $\Omega$  is
locally on one side of its boundary $\Gamma:=\partial \Omega$.

For such sets  we consider the problem of finding the $u$ satisfying
\begin{equation}  \label{eq:heat_fvp}
\left\{
\begin{aligned}
  \partial_tu(t,x) -\Delta u(t,x) &= f(t,x) &&\text{ in } Q:= ]0,T[ \times \Omega ,
\\
  \gamma_0 u(t,x) &= g(t,x) && \text{ on } S:= ]0,T[ \times \partial \Omega,
\\
   r_T u(x) &= u_T(x) && \text{ at } \left\{ T \right\} \times \Omega.
\end{aligned}
\right .
\end{equation}
Hereby the trace of functions on $\Gamma$ is written in the operator notation $\gamma_0u=
u|_{\Gamma}$; similarly we also use $\gamma_0$ for traces on $S$. $r_T$ denotes the trace 
operator at $t=T$.

We shall also use $H^1_0(\Omega)$, which is the subspace obtained by closing 
$C_0^\infty(\Omega)$ in the Sobolev space $H^1(\Omega)$. Dual to this one has $H^{-1}(\Omega)$,
which identifies with the set of restrictions to $\Omega$ from $H^{-1}(\Rn)$, endowed with the
infimum norm. The reader is referred to Chapter 6 and Remark 9.4 in \cite{G09} for the spaces
$H^{s}(\Rn)$ and the infimum norm.

\subsection{The Boundary Homogeneous Case} \label{homoheat-ssect}
In case  $g \equiv 0$ in \eqref{eq:heat_fvp}, the consequences of the abstract results in
Section~\ref{wp-ssect} are straightforward to account for. Indeed, with 
\begin{align}
  V = H_0^1(\Omega),  \quad H = L_2(\Omega), \quad  V^* = H^{-1}(\Omega),
\end{align}
the boundary condition $\gamma_0u=0$ is imposed via the condition that $u(t)\in V$ for all $t$,
or rather through use of the Dirichlet realization of the Laplacian$-\Delta_{\gamma_0}$ 
(denoted by $\mlap_D$ in the introduction), which is
the Lax--Milgram operator $A$ induced by the triple
$(L_2(\Omega),H_0^1(\Omega),s)$ for 
\begin{align}
  s(u,v) = \sum_{j=1}^n \ip{\partial_j u}{\partial_j v}_{L_2(\Omega)}. 
\end{align}
In fact, the Poincar\' e inequality yields that $s(u,v)$ is
$H_0^1(\Omega)$-elliptic, and as it is symmetric too, $A = -\Delta_{\gamma_0}$ is a selfadjoint
unbounded operator in $L_2(\Omega)$, with $D(-\Delta_{\gamma_0})\subset H^1_0(\Omega)$.  

Hence the operator $-A = \lap_{\gamma_0}$ generates an analytic semigroup
$e^{t\lap_{\gamma_0}}$ in $\B(L_2(\Omega))$; the bounded extension 
$-\A=\lap\colon H^{1}_0(\Omega) \rightarrow H^{-1}(\Omega)$ 
induces the analytic semigroup $e^{-t\A}=e^{t\Delta}$ on
$H^{-1}(\Omega)$; cf.~Lemma~\ref{Agen-lem}.
Consistently with Section~\ref{bijection-ssect} we also set $(e^{t\Delta_{\gamma_0}})^{-1} = e^{-t\Delta_{\gamma_0}}$.

For the homogeneous problem with $g=0$ in \eqref{eq:heat_fvp} we have the solution and data spaces
\begin{align}
  X_0 &= L_2(0,T;H^1_0(\Omega)) \bigcap C([0,T]; L_2(\Omega)) \bigcap H^1(0,T; H^{-1}(\Omega)),
\label{X0-id}
\\  
  Y_0&= \left\{ (f,u_T) \in L_2(0,T;H^{-1}(\Omega)) \oplus L_2(\Omega) \Bigm|  
                  u_T - y_f \in D(e^{-T\Delta_{\gamma_0}}) \right\}.
\label{Y0-id}
\end{align}
Here,
with $y_f$ as the usual integral (cf.\ \eqref{heat-ccc} below), the data norm in \eqref{eq:normY} amounts to
\begin{align}
 \| (f,u_T) \|_{Y_0}^2  
  = \int_0^T\|f(t)\|^2_{H^{-1}(\Omega)}\,dt 
  + \int_\Omega(|u_T|^2+|e^{-T\Delta_{\gamma_0}}(u_T - y_f )|^2)\,dx.
\end{align} 

From Theorems~\ref{fvp-thm} and \ref{wp-thm} we may now read off the following result, which
is a novelty even though the problem is classical:

\begin{Theorem}  \label{heat0-thm}
Let $A=-\Delta_{\gamma_0}$ be the Dirichlet realization of the Laplacian in $\Omega$ and 
$\A=-\Delta$ its extension, as introduced above.
When $g=0$ in the final value problem \eqref{eq:heat_fvp} and 
$f \in L_2(0,T;H^{-1}(\Omega))$, $u_T \in L_2(\Omega)$, 
then there exists a solution $u$ in $X_0$ of \eqref{eq:heat_fvp} if and only if the data $(f,u_T)$
are given in $Y_0$, i.e.\ if and only if
\begin{equation}  \label{heat-ccc}
  u_T - \int_0^T e^{-(T-s)\A}f(s) \,ds\quad \text{ belongs to }\quad D(e^{-T \Delta_{\gamma_0}}). 
\end{equation}
In the affirmative case, such $u$ are uniquely determined in $X_0$ and  fulfil the estimate
$\|u\|_{X_0} \leq c \| (f,u_T) \|_{Y_0}$.
Furthermore the difference  in \eqref{heat-ccc} equals 
$e^{T\Delta_{\gamma_0}}u(0)$ in $L_2(\Omega)$. 
\end{Theorem}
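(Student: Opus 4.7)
The plan is to reduce the theorem to a direct application of Theorems~\ref{fvp-thm} and \ref{wp-thm}, since the boundary-homogeneous case $g=0$ is precisely an instance of the abstract final value problem once the right Gelfand triple is identified. First I would verify that $(V,H,V^*)=(H_0^1(\Omega), L_2(\Omega), H^{-1}(\Omega))$ satisfies the hypotheses of Section~\ref{LaxM-ssect}: continuity and density of $H_0^1(\Omega)\hookrightarrow L_2(\Omega)$ are classical, and the symmetric sesquilinear form $s(u,v)=\sum_j\ip{\partial_j u}{\partial_j v}$ is bounded by Cauchy--Schwarz and $V$-elliptic by the Poincar\'e inequality on the bounded set $\Omega$. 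This identifies $A=-\Delta_{\gamma_0}$ with the associated Lax--Milgram operator in $L_2(\Omega)$ and gives the bounded extension $\A=-\Delta\colon H_0^1(\Omega)\to H^{-1}(\Omega)$.

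Next I would observe that the vanishing Dirichlet trace $\gamma_0 u=0$ is encoded in the condition $u(t)\in V=H_0^1(\Omega)$ built into $X_0$, so \eqref{eq:heat_fvp} with $g=0$ becomes literally $\partial_t u+\A u=f$ in $\D'(0,T;V^*)$ together with $r_T u=u_T$ in $H$. Consequently the space $X_0$ in \eqref{X0-id} is the space $X$ of \eqref{eq:X} for this triple, and the set $Y_0$ of \eqref{Y0-id} is the corresponding data space $Y$ from \eqref{Y-id}. With these identifications, the compatibility condition \eqref{heat-ccc} is exactly \eqref{eq:comp_cond}, since $D(e^{TA})=D(e^{-T\Delta_{\gamma_0}})$ under the sign convention \eqref{eq:inverse}. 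Theorem~\ref{fvp-thm} then yields both the necessity and sufficiency of \eqref{heat-ccc} and the uniqueness of $u$ in $X_0$.

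The stability bound $\|u\|_{X_0}\le c\|(f,u_T)\|_{Y_0}$ is a direct transcription of Theorem~\ref{wp-thm}; it requires only that one matches the concrete norms on $X_0$ and $Y_0$ with the abstract norms \eqref{eq:Xnorm} and \eqref{eq:normY}, using the equivalent inner products on $H^{-1}(\Omega)$ induced by $s_{\Re}=s$ via \eqref{ipV*-id}. Finally, the identification that $u_T-y_f$ equals $e^{T\Delta_{\gamma_0}}u(0)$ in $L_2(\Omega)$ is Remark~\ref{u(0)-rem} rewritten in the present notation, since here $-A=\Delta_{\gamma_0}$ and $e^{-TA}=e^{T\Delta_{\gamma_0}}$. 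No genuine obstacle arises: the proof is essentially an application of the abstract machinery, and the only point requiring some care is the verification that the $L_2(0,T;H^{-1}(\Omega))$-norm and the graph norm defining $Y_0$ coincide up to equivalence with the abstract norms used in Theorems~\ref{fvp-thm} and \ref{wp-thm}.
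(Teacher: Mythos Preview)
Your proposal is correct and matches the paper's own approach: the paper explicitly states that Theorem~\ref{heat0-thm} is ``read off'' from Theorems~\ref{fvp-thm} and \ref{wp-thm} once the triple $(H_0^1(\Omega),L_2(\Omega),H^{-1}(\Omega))$ and the form $s$ are identified, and provides no separate proof. Your identification of the final claim with Remark~\ref{u(0)-rem} under the sign convention $e^{-TA}=e^{T\Delta_{\gamma_0}}$ is also exactly right.
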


\begin{Remark} \label{BE-rem}
For $A=-\Delta_{\gamma_0}$ one has the equivalent norms in Facts~1, 2 and the characterisation of 
$D(e^{-T\Delta_{\gamma_0}})$ in Proposition~\ref{domain-prop}.
This is a classical consequence of the compact embedding of
$H^1_0(\Omega)$ into $L_2(\Omega)$ for bounded sets $\Omega$ (e.g.\ \cite[Thm.~8.2]{G09}). 
Thus one obtains for $f=0$, $g=0$ the situation described in the introduction, 
where the space of final data, normed by $\vvvert u_T \vvvert$,
via Proposition~\ref{domain-prop} is seen to be $D(e^{-T\Delta_{\gamma_0}})$ with
equivalent norms.
As the completed solution space $\overline{\cal E}$ in the introduction one may take the Banach
space $\overline{\cal E}=X_0$, cf.\ Theorem~\ref{heat0-thm}.
\end{Remark}

\subsection{The Inhomogeneous Case}
For non-zero data, i.e., when $g\ne0$ on $S$, cf.\ \eqref{eq:heat_fvp}, one may of course try to
reduce to an equivalent homogeneous problem by choosing a function $w$ so that
$\gamma_0 w = g$ on the surface $S$. Here we recall \mbox{the classical}

\begin{Lemma}  \label{wKg-lem}
$\gamma_0\colon H^1(Q)\to H^{1/2}(S)$ is a continuous surjection having a
bounded right inverse $\tilde{K}_0$, so $w = \tilde{K}_0 g$ maps every $g\in H^{1/2}(S)$ 
to $w\in H^1(Q)$ fulfilling $\gamma_ 0w=g$ and
\begin{align}  \label{eq:wKg}
  \|w\|_{H^1(Q)} \leq c \|g\|_{H^{1/2}(S)}.
\end{align}
\end{Lemma}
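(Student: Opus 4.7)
The plan is to reduce the assertion to the classical trace theorem on a smooth (unbounded) domain, bypassing the difficulty that $\partial Q$ has corners at $\{0,T\}\times\partial\Omega$. View the full cylinder $\widetilde Q := \mathbb{R}\times\Omega$ as an open subset of $\mathbb{R}^{n+1}$ with smooth $n$\nobreakdash-dimensional boundary $\widetilde S := \mathbb{R}\times\partial\Omega$; then $S$ is a relatively open, relatively compact piece of $\widetilde S$, and $Q$ is the slab of $\widetilde Q$ cut out by $0<t<T$.

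First I would invoke the classical trace theorem (cf.\ \cite[Thm.~9.5]{G09}) applied to $\widetilde Q$ to obtain a bounded right inverse $\widetilde K\colon H^{1/2}(\widetilde S)\to H^1(\widetilde Q)$ of $\gamma_0\colon H^1(\widetilde Q)\to H^{1/2}(\widetilde S)$. The usual argument, via local charts flattening $\partial\Omega$ together with a partition of unity, goes through on the infinite cylinder because $t$ appears only as an additional tangential variable, so one may take, e.g., $\widetilde K$ as the Poisson operator for $-\Delta_{n+1}+1$ on $\widetilde Q$ with Dirichlet trace $\gamma_0$.

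Second, given $g\in H^{1/2}(S)$, one needs a bounded extension $E\colon H^{1/2}(S)\to H^{1/2}(\widetilde S)$. This is obtained by multiplying $g$ by a cut-off in $t$ supported in a neighbourhood of $[0,T]$ and applying a Seeley-type extension across $t=0$ and $t=T$ in the smooth manifold $\widetilde S$. Setting $\widetilde K_0 g := (\widetilde K Eg)|_Q$ yields a bounded operator $H^{1/2}(S)\to H^1(Q)$, and since restriction commutes with the trace on $S$, $\gamma_0\widetilde K_0g = g$ on $S$. The estimate \eqref{eq:wKg} follows by chaining the norm bounds on $E$, $\widetilde K$, and restriction. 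Continuity and surjectivity of $\gamma_0\colon H^1(Q)\to H^{1/2}(S)$ are then automatic, the latter directly from the existence of $\widetilde K_0$.

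The main obstacle is the extension step $E$: one has to handle the passage through the corners $\{0,T\}\times\partial\Omega$, and here it is convenient that $\widetilde S$ itself is a smooth manifold without boundary, so the task reduces to a one-variable $H^{1/2}$-extension (across $t=0$, $T$) paired with the identity on the $\partial\Omega$ factor — a standard consequence of the anisotropic characterisation $H^{1/2}(\widetilde S) = L_2(\mathbb{R};H^{1/2}(\partial\Omega))\cap H^{1/2}(\mathbb{R};L_2(\partial\Omega))$.
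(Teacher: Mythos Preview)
Your argument is correct and is close in spirit to the paper's, but you make a different geometric choice for the auxiliary smooth domain. The paper stretches $Q$ to $\,]-2T,2T[\,\times\Omega$ and smoothly caps off the ends to obtain a \emph{bounded} smooth open set $\Omega_1\subset\R^{n+1}$ that agrees with $Q$ for $0<t<T$, then invokes the standard trace theorem for bounded smooth domains and defines $H^{1/2}(S)$ as the restriction space $r_SH^{1/2}(\partial\Omega_1)$ with the infimum norm; the right inverse is $\tilde K_0 g = r_Q\tilde K_{0,\Omega_1}\tilde g$ for an extension $\tilde g$ of $g$. You instead work on the \emph{unbounded} infinite cylinder $\widetilde Q=\R\times\Omega$, keeping the product structure $\widetilde S=\R\times\partial\Omega$ of the lateral boundary, and build a Seeley-type extension $E$ in the $t$-variable using the anisotropic description of $H^{1/2}(\widetilde S)$. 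The paper's route has the advantage that the trace theorem on bounded $C^\infty$ domains is textbook material without qualification, at the price of an ad hoc rounded-end construction and a definition of $H^{1/2}(S)$ that nominally depends on $\Omega_1$. Your route is cleaner on the boundary side---the product structure makes the extension step and the anisotropic characterisation transparent---but asks a little more of the reader for the trace theorem on the unbounded cylinder; your remark that $t$ is merely an extra tangential variable in the half-space charts is exactly the right justification.
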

Lacking a reference with details, we note that the lemma is well known for sets like $\Omega$, hence
for smooth open bounded sets $\Omega_1\subset\R^{n+1}$ with operators $\gamma_{0,\Omega_1}$ and
$\tilde K_{0,\Omega_1}$; cf.\ Theorem~B.1.9 in \cite{H} or Theorem~9.5 in \cite{G09} for the flat case. In particular, one can 
stretch $Q$ to $\,]-2T,2T[\,\times\Omega\,$ and attach rounded ends in a smooth way
 to obtain a set $\Omega_1\subset \,]-3T,3T[\,\times\Omega$ equal to
$Q$ for $0<t<T$. Here $H^1(Q)=r_QH^1(\Omega_1)$ is a classical result, when the latter space  of
restrictions to $Q$ has the infimum norm.
While $H^s(\partial\Omega_1)$ is defined using local coordinates in a
standard way, cf.\ formula {(8.10)} in \cite{G09}, the Sobolev space $H^s(S)$ on the surface $S$ can be defined
as the set of restrictions 
$r_SH^s(\partial\Omega_1)$. When $r_S\tilde g=g$, then
$\tilde K_0g=r_Q\tilde K_{0,\Omega_1}\tilde g$ defines the desired operator $\tilde K_0$, as
$\gamma_{0,\Omega_1}$ acts as $\gamma_0$ in $Q$.  

\begin{Remark} 
  \label{HsS-rem}
The norm in $H^s(S)$ can be chosen so that this is a Hilbert space; cf.\ formula {(8.10)} in \cite{G09}. 
However, Sobolev spaces on smooth surfaces is a vast subject, requiring so-called distribution
densities as explained in \cite[Sect.\ 6.3]{H}. We refer the reader to \cite[Sect.\ 8.2]{G09} for a
short introduction to this subject; as there, we prefer a more intuitive approach (exploiting the surface
measure on $\Omega_1$) but skip details. A systematic exposition of this framework can be found
in \cite[Sect.\ 4]{JoMHSi3}, albeit in a general $L_p$-setting with mixed-norms leading to
anisotropic Triebel--Lizorkin spaces $F^{s,\vec a}_{\vec p,q}(S)$ on the curved boundary, which in
general are the correct boundary data spaces for parabolic problems with different integrability
properties in space and time, as noted in \cite{JoSi08}; cf.\ the discussion of the heat equation  in \cite[Sect.\
6.5]{JoMHSi3} and the more detailed account in \cite[Ch.\ 7]{SMHphd}.
\end{Remark}

However, when splitting the solution of \eqref{eq:heat_fvp} as $u=v+w$ for $w$ as in Lemma~\ref{wKg-lem},
then $v$ should satisfy \eqref{eq:heat_fvp} with data $(\tilde f,0,\tilde u_T)$,
\begin{equation} \label{tilde-data}
  \tilde f=f-(\partial_t w-\Delta w),\qquad \tilde u_T=u_T-r_Tw.
\end{equation}
At first glance one might therefore think that $w$
is inconsequential for the compatibility condition \eqref{heat-ccc}, for 
$\tilde u_T-y_{\tilde f}$ there equals the usual term $u_T-y_f$ minus
$r_Tw-y_{\partial_t w-\Delta w}$, where the latter seemingly belongs to $D(e^{-T\Delta_{\gamma_0}})$ as
the pair $(\partial_t w-\Delta w, r_Tw)$ could seem to be a vector in the range of the operator $P_{-\Delta}$ in
Remark~\ref{PA-rem}.

But obviously this is not the case, because the function $w$ is outside the domain $X_0$ of
$P_{-\Delta}$. Indeed, $w\in L_2(0,T;H^1(\Omega))$ and has $\gamma_0 w=g\not\equiv 0$ in the
non-homogeneous case, whence $w\notin L_2(0,T;H^1_0(\Omega))$. 
So one might think it would be necessary to discuss homogeneous problems with larger
solution spaces $\tilde X_0$ than $X_0$.

We propose to circumvent these difficulties by applying Lemma~\ref{wKg-lem} to the corresponding linear
\emph{initial} value problem instead, since in the present spaces of low regularity there is no compatibility
condition needed for this: 
\begin{equation}  \label{eq:heat_ivp}
\left\{
\begin{aligned}
  \partial_tu -\Delta u &= f &&\quad\text{ in } Q,
\\
  \gamma_0 u &= g  &&\quad \text{ on } S,
\\
  r_0 u &= u_0 &&\quad \text{ at } \{ 0 \} \times \Omega.
\end{aligned}
\right .
\end{equation}
More precisely, we shall analogously to Section~\ref{fvp-sect} obtain a bijection $u(0)\leftrightarrow
u(T)$ between initial and final states by establishing a solution formula as in
Theorem~\ref{thm:repr_for_u}.  
(For general background material on \eqref{eq:heat_ivp} the reader could consult Section~III.6 in
\cite{ABHN11}, and for the fine theory including compatibility conditions we refer to \cite{GrSo90}.)

Analogously to Theorem~\ref{thm:Temam} and Corollary~\ref{cor:Temam}, we depart from well-posedness of \eqref{eq:heat_ivp}.
This is well known \emph{per se}, but we need to briefly review the explanation in order to
account later for the decisive existence of an improper integral showing up when $g\ne0$ in \eqref{eq:heat_fvp}.  

Since the solutions now take values in the full space $H^1(\Omega)$, we shall in this section denote the
solution space by $X_1$. It is given by
\begin{align}
 X_1 = L_2(0,T; H^1(\Omega)) \bigcap C([0,T];L_2(\Omega)) \bigcap H^1(0,T;H^{-1}(\Omega)),
\end{align}
and $X_1$ is a Banach space when normed analogously to \eqref{eq:Xnorm},
\begin{align} \label{Xheat-nrm}
  \| u \|_{X_1}= (\|u\|_{L_2(0,T; H^1(\Omega))}^2 + \sup_{0\leq t \leq T} \|u(t)\|_{L_2(\Omega)}^2 
                   + \|u\|_{H^1(0,T; H^{-1}(\Omega))}^2)^{1/2}.
\end{align}

As $H^1$, $H^{-1}$ are not dual on $\Omega$, the redundancy in Remark~\ref{redundancy-rem}
does not extend to the term $\sup_{[0,T]} \|u\|_{L_2}$ above.

\begin{Proposition}  \label{heat-prop}
The heat initial value problem \eqref{eq:heat_ivp} has a unique solution $u\in X_1$ for given data
$f\in L_2(0,T;H^{-1}(\Omega))$, $g\in H^{1/2}(S)$, $u_0\in L_2(\Omega)$,
and there is an estimate
\begin{align}  \label{heat-ineq}
  \|u\|_{X_1}^2 \leq 
  c ( \|u_0\|_{L_2(\Omega)}^2 + \|f\|_{L_2(0,T;H^{-1}(\Omega))}^2 + \|g\|_{H^{1/2}(S)}^2 ).
\end{align}
\end{Proposition}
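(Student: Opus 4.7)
The plan is to reduce~\eqref{eq:heat_ivp} to the boundary homogeneous setting of Theorem~\ref{thm:Temam} by lifting the Dirichlet data via Lemma~\ref{wKg-lem} and splitting the unknown as $u=v+w$, where $w:=\tilde K_0 g\in H^1(Q)$ satisfies $\gamma_0 w=g$ with $\|w\|_{H^1(Q)}\le c\|g\|_{H^{1/2}(S)}$, and $v$ is to solve~\eqref{eq:heat_ivp} with boundary value $0$, source $\tilde f:=f-\partial_t w+\Delta w$, and initial condition $\tilde u_0:=u_0-r_0 w$.

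The key technical step is to verify $(\tilde f,\tilde u_0)\in L_2(0,T;H^{-1}(\Omega))\oplus L_2(\Omega)$. Using $H^1(Q)\hookrightarrow L_2(0,T;H^1(\Omega))\cap H^1(0,T;L_2(\Omega))$, one gets $\partial_t w\in L_2(0,T;L_2(\Omega))\hookrightarrow L_2(0,T;H^{-1}(\Omega))$; moreover $-\Delta$ maps $H^1(\Omega)$ boundedly into $H^{-1}(\Omega)$ via $\dualp{-\Delta u}{\varphi}=\sum_j\ip{\partial_j u}{\partial_j\varphi}$ for $\varphi\in H^1_0(\Omega)$, whence $\Delta w\in L_2(0,T;H^{-1}(\Omega))$ with norm controlled by $\|w\|_{H^1(Q)}$. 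The embedding $H^1(0,T;L_2(\Omega))\hookrightarrow C([0,T];L_2(\Omega))$ from Remark~\ref{rem:Temam} yields $r_0 w\in L_2(\Omega)$ with $|r_0 w|\le c\|w\|_{H^1(Q)}$. Altogether, $\|\tilde f\|_{L_2(0,T;H^{-1})}+|\tilde u_0|\le c(\|f\|_{L_2(0,T;H^{-1})}+|u_0|+\|g\|_{H^{1/2}(S)})$.

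With the Gelfand triple $(H^1_0(\Omega),L_2(\Omega),H^{-1}(\Omega))$ and $A=-\Delta_{\gamma_0}$, Theorem~\ref{thm:Temam} produces a unique $v\in X_0$ solving $\partial_t v-\Delta v=\tilde f$ in $\D'(0,T;H^{-1}(\Omega))$ with $r_0 v=\tilde u_0$, and Corollary~\ref{cor:Temam} bounds $\|v\|_{X_0}$ by the quantities above. Since $X_0\subset X_1$ and $w\in X_1$ with $\|w\|_{X_1}\le c\|w\|_{H^1(Q)}$, the function $u=v+w$ lies in $X_1$, solves~\eqref{eq:heat_ivp}, and satisfies~\eqref{heat-ineq}. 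For uniqueness, any two solutions $u_1,u_2\in X_1$ have difference with vanishing boundary trace, hence in $X_0$, and Theorem~\ref{thm:Temam} applied to zero data forces $u_1=u_2$. The main obstacle is purely technical, namely verifying that $\Delta w\in L_2(0,T;H^{-1}(\Omega))$ despite $w$ only being in $H^1(Q)$; once the bounded mapping $-\Delta\colon H^1(\Omega)\to H^{-1}(\Omega)$ is recognised, the rest of the proof is a direct appeal to the abstract machinery of Section~\ref{ivp-sect}.
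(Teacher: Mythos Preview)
Your proof is correct and follows essentially the same approach as the paper's: lift the boundary data via $w=\tilde K_0 g$, verify that $w\in X_1$ and that the modified data $(\tilde f,\tilde u_0)$ lie in the right spaces, invoke Theorem~\ref{thm:Temam} and Corollary~\ref{cor:Temam} for $v$, and conclude by summing and using linearity for uniqueness. The only differences are cosmetic---you spell out the boundedness of $-\Delta\colon H^1(\Omega)\to H^{-1}(\Omega)$ where the paper packages the same fact into an inequality, and your uniqueness argument makes the passage from $X_1$ to $X_0$ via $\ker\gamma_0=H^1_0(\Omega)$ slightly more explicit.
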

\begin{proof}
With $w=\tilde{K}_0 g$ as in Lemma~\ref{wKg-lem}, we
write $u=v+w$ for some $v\in X_1$ solving \eqref{eq:heat_ivp} for data 
\begin{align}
  \tilde{f} = f - (\partial_t - \Delta)w,\qquad \tilde g=0,\qquad  \tilde{u}_0= u_0 - w(0).
\end{align} 
Here $w(0)$ is well defined, as $w \in H^1(Q)$ implies $w \in C([0,T] ; L_2(\Omega))$, by an
application of Lemma~\ref{lem:Temam}.
That $w$ even is in $X_1$ results from the easy estimates, where $I=\,]0,T[\,$,
\begin{align} \label{w-ineq}
  \| w'\|_{L_2(I;H^{-1})}^2 + \|\Delta w\|_{L_2(I;H^{-1})}^2 
  \leq \|w\|_{H^1(I;L_2)}^2 + c \|w\|_{L_2(I;H^1)}^2 
  \leq c \|w\|_{H^1(Q)}^2.
\end{align}
This moreover yields that $\tilde{f} \in L_2(0,T;H^{-1}(\Omega))$, and $\tilde{u}_0 \in L_2(\Omega)$,
so by Theorem~\ref{thm:Temam}, the boundary homogeneous problem 
for $v$ has a solution in $X_0$; cf.\ \eqref{X0-id}. 
Hence \eqref{eq:heat_ivp} has the solution $u=v+w$ in $X_1$;  and by linearity this is unique in view
of Theorem~\ref{thm:Temam}. 

Inspecting the above arguments, we first note that by \eqref{eq:addendumII}, 
\begin{align} \label{w-ineq'}
  \sup_{0\leq t \leq T} \|w(t)\|_{L_2(\Omega)} 
  \leq c (\|w\|_{L_2(0,T;L_2(\Omega))} + \|\partial_t w\|_{L_2(0,T;L_2(\Omega))}) \le c \|w\|_{H^1(Q)}^2,
\end{align}
so the estimate \eqref{w-ineq} can be sharpened to $\|w\|_{X_1}^2 \leq c \|w\|_{H^1(Q)}^2$.
Now Corollary~\ref{cor:Temam} gives
\begin{align}
  \|u\|_{X_1}^2 & \leq 2 ( \|v\|_{X_0}^2 + \|w\|_{X_1}^2)  
  \leq c (\|\tilde{u}_0\|_{L_2(\Omega)}^2 + \|\tilde{f}\|_{L_2(0,T;H^{-1}(\Omega))}^2 +
  \|w\|_{X_1}^2) 
\nonumber\\
  & \leq c (\|u_0\|_{L_2(\Omega)}^2 + \|f\|_{L_2(0,T;H^{-1}(\Omega))}^2 
  + \|(\partial_t - \Delta) w \|_{L_2(0,T;H^{-1})}^2 + \|w\|_{H^1(Q)}^2)
\end{align}
which via  \eqref{w-ineq} and \eqref{eq:wKg} entails the stated estimate \eqref{heat-ineq}.
\end{proof}

As a crucial addendum, we may apply Theorem~\ref{thm:repr_for_u} directly to the function $v$
constructed during the above proof and then substitute $v=u-w$ to derive that
\begin{equation} \label{eq:uw}
  u(t)=w(t)+ e^{t\Delta_{\gamma_0}}(u_0-w(0))+\int_0^t e^{-(t-s)\A}(f-(\partial_s-\Delta)w)\,ds.
\end{equation}
This formula for the $u$ solving the inhomogeneous final value problem applies especially for $t=T$, but we
shall keep $t$ in $[0,T]$ to deduce a formula for its solution.

Our strategy in the following will be to simplify the contributions from $w$, and ultimately to
reintroduce the boundary data $g$ instead of $w$.
To do so, we apply the Leibniz rule in
Proposition~\ref{prop:Leibniz_rule} to our function $w$ in $H^1(0,t;L_2(\Omega))$ and get 
\begin{equation}  \label{w-id}
   \partial_s(e^{(t-s)\Delta_{\gamma_0}}w(s))
   =e^{(t-s)\Delta_{\gamma_0}}\partial_sw(s)-\Delta_{\gamma_0}e^{(t-s)\Delta_{\gamma_0}}w(s).
\end{equation}
As the first inconvenience, $\Delta_{\gamma_0}$ does not commute with the semigroup,
since $w$ as an element of $H^1\setminus H^1_0$
belongs to neither the domain of the realization $-\Delta_{\gamma_0}$, nor to that of $\A$.

Secondly, the right-hand side is only integrable on $[0,t-\varepsilon]$ 
for $\varepsilon>0$, as the last term has a singularity at $s=t$; cf.\
Theorem~\ref{Pazy-thm}.  As a remedy, we may use the improper Bochner integral
\begin{align}  \label{dint-id}
  \dashint_0^t \Delta_{\gamma_0} e^{(t-s)\Delta_{\gamma_0}} w(s) \,ds 
 =\lim_{\varepsilon\to0}
  \int_0^{t-\varepsilon} \Delta_{\gamma_0} e^{(t-s)\Delta_{\gamma_0}} w(s) \,ds.
\end{align}

\begin{Lemma}
For every $w \in H^1(Q)$ the limit \eqref{dint-id} exists in $L_2(\Omega)$ and
\begin{equation}
  \label{eq:w}
  w(t)-e^{t\Delta_{\gamma_0}}w(0)
   =\int_0^t e^{(t-s)\Delta_{\gamma_0}}\partial_sw(s)\,ds
    -\dashint_0^t\Delta_{\gamma_0}e^{(t-s)\Delta_{\gamma_0}}w(s))\,ds.
\end{equation}
\end{Lemma}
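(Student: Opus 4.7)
The plan is to apply the Leibniz rule \eqref{w-id} of Proposition \ref{prop:Leibniz_rule} on the truncated interval $[0,t-\varepsilon]$, integrate, and pass to the limit $\varepsilon \to 0^+$. Since $w \in H^1(Q) \subset H^1(0,T;L_2(\Omega))$, Remark \ref{rem:Temam} gives $w \in C([0,T];L_2(\Omega))$, so the pointwise values $w(\tau)$ are well-defined in $L_2(\Omega)$; and Proposition~\ref{prop:Leibniz_rule} applies with $B=L_2(\Omega)$ and $\Gen=\Delta_{\gamma_0}$, which generates a uniformly bounded (in fact contractive) analytic semigroup on $L_2(\Omega)$.

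First, on $[0,t-\varepsilon]$ both terms on the right-hand side of \eqref{w-id} are in $L_1(0,t-\varepsilon;L_2(\Omega))$: the first by the bound $\|e^{(t-s)\Delta_{\gamma_0}}\partial_s w(s)\|_{L_2} \le M\|\partial_s w(s)\|_{L_2}$ together with $\partial_s w \in L_2(0,T;L_2(\Omega))$, and the second by the estimate $\|\Delta_{\gamma_0} e^{(t-s)\Delta_{\gamma_0}}\|_{\B(L_2)} \le c/(t-s) \le c/\varepsilon$ from Theorem~\ref{Pazy-thm}. Applying the fundamental theorem for vector functions (Lemma~\ref{lem:Temam}, in the implication (ii)$\Rightarrow$(i)) to $s \mapsto e^{(t-s)\Delta_{\gamma_0}}w(s)$, whose distributional derivative is given by the Leibniz rule, yields the identity
\begin{equation}
e^{\varepsilon\Delta_{\gamma_0}}w(t-\varepsilon) - e^{t\Delta_{\gamma_0}}w(0) = \int_0^{t-\varepsilon} e^{(t-s)\Delta_{\gamma_0}}\partial_s w(s)\,ds - \int_0^{t-\varepsilon} \Delta_{\gamma_0}e^{(t-s)\Delta_{\gamma_0}}w(s)\,ds.
\end{equation}

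Next, I pass to the limit $\varepsilon \to 0^+$ termwise in $L_2(\Omega)$. For the left-hand side I split
\begin{equation}
e^{\varepsilon\Delta_{\gamma_0}}w(t-\varepsilon) - w(t) = e^{\varepsilon\Delta_{\gamma_0}}\bigl(w(t-\varepsilon)-w(t)\bigr) + \bigl(e^{\varepsilon\Delta_{\gamma_0}} - I\bigr)w(t);
\end{equation}
the first summand tends to $0$ by uniform boundedness of the semigroup and continuity $w\in C([0,T];L_2(\Omega))$, the second by strong continuity of the semigroup at $0$. For the first integral on the right, dominated convergence applies with majorant $M\|\partial_s w(s)\|_{L_2} \in L_1(0,T)$, giving the limit $\int_0^t e^{(t-s)\Delta_{\gamma_0}}\partial_s w(s)\,ds$.

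Since both the left-hand side and the first integral on the right have limits in $L_2(\Omega)$, the difference forces the second integral to converge in $L_2(\Omega)$; this establishes the existence of the improper Bochner integral \eqref{dint-id} and, reorganising the terms, identity \eqref{eq:w}. The chief technical point is that the singularity of $\Delta_{\gamma_0}e^{(t-s)\Delta_{\gamma_0}}$ at $s=t$ prevents direct use of the Leibniz rule on the full interval $[0,t]$, so the truncation at $t-\varepsilon$ is essential; convergence of the truncated integrals is then extracted as a \emph{consequence} rather than verified directly, which neatly bypasses any absolute-integrability hypothesis on the integrand near $s=t$.
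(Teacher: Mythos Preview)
Your proof is correct and follows essentially the same approach as the paper: integrate the Leibniz identity \eqref{w-id} over $[0,t-\varepsilon]$ via Lemma~\ref{lem:Temam}, show that the left-hand side and the integral containing $\partial_s w$ both converge in $L_2(\Omega)$ as $\varepsilon\to0^+$, and deduce convergence of the remaining integral as a consequence. Your explicit splitting $e^{\varepsilon\Delta_{\gamma_0}}w(t-\varepsilon)-w(t)=e^{\varepsilon\Delta_{\gamma_0}}(w(t-\varepsilon)-w(t))+(e^{\varepsilon\Delta_{\gamma_0}}-I)w(t)$ is a slightly more detailed version of what the paper condenses into a single bilinearity remark, but the argument is the same.
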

\begin{proof}
As $e^{t \Delta_{\gamma_0}}$ is uniformly bounded according to Theorem~\ref{Pazy-thm} and $w\in
C([0,T],L_2(\Omega))$ was seen in the above proof, 
bilinearity gives that in $L_2(\Omega)$,
\begin{align} \label{wlim-eq}
  e^{(t-(t-\varepsilon)) \Delta_{\gamma_0}} w(t-\varepsilon) \rightarrow w(t)  
  \quad\text{ for $\varepsilon\to0$}. 
\end{align}
Moreover, integration of both sides in \eqref{w-id} gives, cf.\ Lemma~\ref{lem:Temam},
\begin{align}
  [e^{(t-s) \Delta_{\gamma_0}} w(s)]_{s=0}^{s=t-\varepsilon} 
  = \int_0^{t-\varepsilon} (-\Delta_{\gamma_0}) e^{(t-s)\Delta_{\gamma_0}} w(s) \,ds 
  + \int_0^{t-\varepsilon} e^{(t-s)\Delta_{\gamma_0}} \partial_s w(s) \,ds.
\end{align}
The left-hand side converges by \eqref{wlim-eq}, and by dominated convergence 
the rightmost term does so for $\varepsilon\to 0^+$ (through an arbitrary sequence),
so also
$\int_0^{t-\varepsilon} \Delta_{\gamma_0} e^{(t-s)\Delta_{\gamma_0}} w(s) \,ds$ converges in
$L_2(\Omega)$ as claimed. Then \eqref{eq:w} is the resulting identity among the limits.
\end{proof}

Identity \eqref{eq:w} from the lemma applies directly in the solution formula \eqref{eq:uw},
and because terms with $\partial_s w$ cancel, one obtains
\begin{equation}  \label{eq:D}
  u(t)= e^{t\Delta_{\gamma_0}}u_0 +\int_0^t e^{-(t-s)\A}f\,ds
  +\int_0^t e^{-(t-s)\A}\Delta w\,ds
  -\dashint_0^t\Delta_{\gamma_0}e^{(t-s)\Delta_{\gamma_0}}w\,ds. 
\end{equation}
We shall reduce the difference of the last two integrals in order to reintroduce the
boundary data $g$ instead of $w$.  

First we use that $\Delta=\A\A^{-1}\Delta$ on $H^1(\Omega)$ and write both terms as
improper integrals, 
\begin{equation}
   \label{eq:wdiff}
   -\dashint_0^t \A e^{-(t-s)\A}(I-\A^{-1}\Delta) w(s)\,ds.
\end{equation}
Here $Q=I-\A^{-1}\Delta$ is a well-known projection from the fine elliptic theory of the problem
\begin{equation}
  -\Delta u= f,\quad \gamma_0 u=g.
\end{equation}
In fact, if this is treated via the matrix operator 
$\left(\begin{smallmatrix} \mlap\\ \gamma_0\end{smallmatrix}\right)$,
which has an inverse in row form 
$\left(\begin{smallmatrix} -\A^{-1}& K_0\end{smallmatrix}\right)$ that applies to the data
$\left(\begin{smallmatrix} f\\ g\end{smallmatrix}\right)$,
the basic composites appear in the two operator identities on $H^1(\Omega)$ and
$H^{-1}(\Omega)\oplus H^{1/2}(\Gamma)$ respectively,
\begin{align}
  I &= \begin{pmatrix}  -\A^{-1}& K_0\end{pmatrix}
   \begin{pmatrix} -\Delta\\ \gamma_0\end{pmatrix}
    =\A^{-1}\Delta+K_0\gamma_0,
\label{Dir1-id}
\\
  \begin{pmatrix} I&0\\ 0& I\end{pmatrix}
  &=\begin{pmatrix} -\Delta\\ \gamma_0\end{pmatrix}\begin{pmatrix}  -\A^{-1}& K_0\end{pmatrix}
   =\begin{pmatrix}  \Delta\A^{-1}& \Delta K_0\\ -\gamma_0\A^{-1}& \gamma_0K_0\end{pmatrix}.
\label{Dir2-id}
\end{align}
Thus we get from the first formula that $Q=I-\A^{-1}\Delta=K_0\gamma_0$ on $H^1(\Omega)$. 

However, before we implement this, we emphasize that the simplicity of the formulas \eqref{Dir1-id} and~\eqref{Dir2-id}
relies on a specific choice of $K_0$ explained in the following:
 
As $\A=\Delta\big|_{H^1_0}$ holds in the distribution sense, $P:= \A^{-1} \Delta $ clearly fulfils
$P^2=P$, is bounded $H^1\to H_0^1$ and equals $I$ on $H^1_0$, so $P$ is the projection onto
$H_0^1(\Omega)$ along its null space, which evidently is the closed subspace of harmonic
$H^1$-functions, namely
\begin{equation}
  Z(-\Delta)=\{\, u\in H^1(\Omega)\mid -\Delta u=0\,\}. 
\end{equation}
Therefore $H^1$ is a direct sum,
\begin{align}  \label{HZ-eq}
  H^1(\Omega) = H_0^1(\Omega) \dotplus Z(-\Delta).
\end{align}
We also let $Q = I-P$ denote the projection on $Z(-\Delta)$ along $H_0^1(\Omega)$, as from the
context it can be distinguished from the time cylinder (also denoted by $Q$).

Since $\gamma_0\colon H^1(\Omega) \rightarrow H^{1/2}(\Gamma)$ is surjective with $H^1_0$
as the null-space, it has an inverse $K_0$ on the complement $Z(-\Delta)$,
which by the open mapping principle is bounded  
\begin{align}
  K_0 \colon H^{1/2}(\Gamma) \rightarrow Z(-\Delta).
\end{align}
Hence $K_0\colon H^{1/2}(\Gamma) \rightarrow H^1(\Omega)$ is a bounded  
right-inverse, i.e.\ $\gamma_0 K_0= I_{H^{1/2}(\Gamma)}$. The rest of
\eqref{Dir2-id} follows at once. Moreover, since $\gamma_0P=0$,
\begin{align}   \label{eq:KgQ}
  K_0 \gamma_0 = K_0 \gamma_0 (P+Q) = K_0 \gamma_0 Q = I_{Z(-\Delta)} Q = Q,
\end{align}
which by definition of $Q$ and $P$ gives \eqref{Dir1-id}. ($K_0$ is known as a Poisson operator;
these are amply discussed within the pseudo-differential boundary operator calculus in \cite{G1}.)

Using this set-up we obtain:

\begin{Proposition} \label{ug-prop}
If $u$ denotes the unique solution to the initial boundary value problem \eqref{eq:heat_ivp}
provided by Proposition~\ref{heat-prop}, then $u$ fulfils the identity 
\begin{equation} \label{eq:ug}
  u(t)=e^{t\Delta_{\gamma_0}}u_0+\int_0^t e^{-(t-s)\A}f(s)\,ds
  - \dashint_0^t \A e^{(t-s)\Delta_{\gamma_0}}K_0 g(s)\,ds,
\end{equation}
where the improper integral converges in $L_2(\Omega)$ for every $t\in [0,T]$.  
\end{Proposition}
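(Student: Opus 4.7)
The plan is to start from the representation \eqref{eq:D}, which has already been derived by feeding the integrated Leibniz rule \eqref{eq:w} into the solution formula \eqref{eq:uw}, and to collapse its last two integrals into the single improper integral appearing in \eqref{eq:ug}. The engine is the Dirichlet projection decomposition associated with $H^1(\Omega) = H_0^1(\Omega) \dotplus Z(-\Delta)$, which writes
\begin{equation*}
w(s) = Pw(s) + Qw(s), \qquad P = \A^{-1}\Delta, \quad Q = K_0\gamma_0,
\end{equation*}
and, crucially, places $K_0 g(s) = Qw(s)$ on stage, because $\gamma_0 w(s) = g(s)$.

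First I would reprocess $\int_0^t e^{-(t-s)\A}\Delta w(s)\,ds$ using $\Delta Qw = 0$, so that only $Pw$ contributes: $\Delta w = \Delta Pw$ in $H^{-1}(\Omega)$, and since $Pw(s) \in H_0^1(\Omega) = D(\A)$ one may replace $\Delta Pw(s)$ by $-\A Pw(s)$. Commuting the closed operator $\A$ past its own semigroup in $V^*$ then produces
\begin{equation*}
\int_0^t e^{-(t-s)\A}\Delta w(s)\,ds = -\int_0^t \A e^{-(t-s)\A}Pw(s)\,ds
\end{equation*}
as an identity in $H^{-1}(\Omega)$. Next, the smoothing in Theorem~\ref{Pazy-thm} together with Lemma~\ref{lem:AA} places $e^{-(t-s)\A}Pw(s) = e^{(t-s)\Delta_{\gamma_0}}Pw(s)$ in $D(\Delta_{\gamma_0})$ for $s<t$, on which $\A$ acts as $-\Delta_{\gamma_0}$; and analogously $\Delta_{\gamma_0} e^{(t-s)\Delta_{\gamma_0}}w(s) = -\A e^{(t-s)\Delta_{\gamma_0}}w(s)$.

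With both rewritten integrals now realised as $L_2(\Omega)$-valued objects for $s<t$, linearity allows the $Pw$ contributions to cancel, leaving the sum of the last two terms of \eqref{eq:D} equal to
\begin{equation*}
-\dashint_0^t \A e^{(t-s)\Delta_{\gamma_0}} Qw(s)\,ds = -\dashint_0^t \A e^{(t-s)\Delta_{\gamma_0}} K_0 g(s)\,ds,
\end{equation*}
which, substituted back into \eqref{eq:D}, yields \eqref{eq:ug}. Convergence of the improper integral in $L_2(\Omega)$ is forced: each of the other terms in \eqref{eq:ug} lies in $L_2(\Omega)$ (the semigroup term trivially, the $f$-integral by Theorem~\ref{thm:repr_for_u}, and $u(t)$ by Proposition~\ref{heat-prop}), and the difference must equal the improper integral.

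The main obstacle is a bookkeeping issue: the first rewritten integral is natively an $H^{-1}$-valued Bochner integral whereas the second is a truncated $L_2$-limit, so the cancellation of the $Pw$ parts cannot be done term by term in a single space. The remedy is to truncate both integrals to $[0,t-\varepsilon]$, where the integrands jointly live in $L_2(\Omega)$ by semigroup smoothing, perform the pointwise cancellation there, and then let $\varepsilon\to 0^+$; at that stage one can simultaneously read off convergence of the residual improper $L_2$-integral of $\A e^{(t-s)\Delta_{\gamma_0}}K_0 g(s)$ from the convergence of the parent truncations. The finer estimate $\|\A e^{(t-s)\Delta_{\gamma_0}}\|_{\B(H)}\le c/(t-s)$ from Theorem~\ref{Pazy-thm} plays no direct role beyond ensuring that the only possible singularity occurs at the endpoint $s=t$.
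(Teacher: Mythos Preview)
Your proposal is correct and follows essentially the same route as the paper. The paper, in the passage leading up to \eqref{eq:wdiff}, writes $\Delta=\A\A^{-1}\Delta$, rewrites both $w$-integrals as improper integrals with a common factor $\A e^{-(t-s)\A}$, and collapses them into the single expression $-\dashint_0^t \A e^{-(t-s)\A}(I-\A^{-1}\Delta)w\,ds$; the one-line proof then just invokes $(I-\A^{-1}\Delta)w=Qw=K_0\gamma_0 w=K_0 g$. Your version is the same algebra told from the other end: you split $w=Pw+Qw$ first, observe $\Delta w=\Delta Pw$, and then let the $Pw$ parts cancel across the two truncated integrals---which is precisely how one makes the paper's terse phrase ``write both terms as improper integrals'' rigorous. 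Your explicit remark that the truncation to $[0,t-\varepsilon]$ places all integrands in $L_2(\Omega)$ before cancelling is a useful clarification the paper leaves implicit.
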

\begin{proof}
Because of \eqref{eq:KgQ} we may write $(I-\A^{-1}\lap)w=Q w=K_0\gamma_0 w= K_0 g$ when $\gamma_0 w=g$,
and when this is applied in \eqref{eq:wdiff}, the solution formula \eqref{eq:D} simplifies to \eqref{eq:ug}.
\end{proof}

For $t=T$ the second term in \eqref{eq:ug} gives back $y_f=\int_0^T e^{-(T-s)\A}f(s)\,ds$
from Section~\ref{fvp-sect}.
However, the \emph{full} influence on $u(T)$ from the boundary data $g$ is collected in the third term as
\begin{align} \label{zg-id}
  z_g = \dashint_0^T \A e^{(T-s) \Delta_{\gamma_0}} K_0 g(s) \,ds.
\end{align}
That the map $g \mapsto z_g$ is well defined is clear by taking $t=T$ in Proposition~\ref{ug-prop};
this is a non-trivial result. 
The map is linear by the calculus of limits. 
In case $f=0$, $u_0=0$ it is seen from \eqref{eq:ug} that
$z_g = u(T)$, so obviously $\|z_g\|_{L_2(\Omega)}\leq \sup_t\|u(t)\|_{L_2(\Omega)}$, 
which in turn is estimated by $c \|g\|_{H^{1/2}(S)}$ using Proposition~\ref{heat-prop}. 
This proves

\begin{Lemma} \label{zg-lem}
  The linear operator $g\mapsto z_g$ is bounded $H^{1/2}(S)\to L_2(\Omega)$.
\end{Lemma}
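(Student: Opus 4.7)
The plan is to deduce the lemma directly from Proposition~\ref{heat-prop} by specialising the representation formula of Proposition~\ref{ug-prop}. Given $g \in H^{1/2}(S)$, I consider the initial boundary value problem \eqref{eq:heat_ivp} with the trivial data $f = 0$ and $u_0 = 0$, keeping the given $g$. Proposition~\ref{heat-prop} produces a unique $u \in X_1$ with
\[
  \|u\|_{X_1}^2 \le c\|g\|_{H^{1/2}(S)}^2,
\]
and the $X_1$-norm in \eqref{Xheat-nrm} by definition controls $\sup_{0 \le t \le T}\|u(t)\|_{L_2(\Omega)}$.

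Next I invoke Proposition~\ref{ug-prop} for this $u$: specialising \eqref{eq:ug} to $f = 0$, $u_0 = 0$ and $t = T$ makes the first two terms vanish, leaving $u(T) = -z_g$ in $L_2(\Omega)$ by the very definition \eqref{zg-id} of $z_g$ as an improper Bochner integral. Taking norms therefore yields
\[
  \|z_g\|_{L_2(\Omega)} \;=\; \|u(T)\|_{L_2(\Omega)} \;\le\; \sup_{0 \le t \le T}\|u(t)\|_{L_2(\Omega)} \;\le\; \|u\|_{X_1} \;\le\; c\|g\|_{H^{1/2}(S)},
\]
which is the required estimate. Linearity of $g \mapsto z_g$ is immediate from the linearity of the Poisson operator $K_0$, of the analytic semigroup, and of $\A$, together with the linearity of the truncated Bochner integrals on $[0,T-\varepsilon]$ and of the $L_2(\Omega)$-limit as $\varepsilon \to 0^+$ in \eqref{dint-id}.

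The only non-trivial point was already handled upstream, namely the convergence of the improper integral defining $z_g$, which was established in the course of proving Proposition~\ref{ug-prop}. With that in hand, the present lemma is essentially a corollary, as the boundedness is just a specialisation of the $X_1$-stability estimate for the inhomogeneous initial boundary value problem to the evaluation functional $u \mapsto u(T)$.
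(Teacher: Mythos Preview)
Your proof is correct and follows essentially the same route as the paper: specialise Proposition~\ref{ug-prop} to $f=0$, $u_0=0$ so that $u(T)=-z_g$, then bound $\|z_g\|_{L_2(\Omega)}$ by $\sup_t\|u(t)\|_{L_2(\Omega)}\le\|u\|_{X_1}\le c\|g\|_{H^{1/2}(S)}$ via Proposition~\ref{heat-prop}. Your treatment of linearity through the calculus of limits likewise matches the paper's one-line remark.
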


Finally, from Proposition~\ref{ug-prop}, we conclude for an arbitrary solution in $X_1$ of the heat
equation $u'-\Delta u=f$ with $\gamma_0 u=g$ on $S$ that 
\begin{equation} \label{uTu0-id}
  u(T)=e^{T\Delta_{\gamma_0}}u(0)+y_f-z_g.  
\end{equation}
Therefore we also here have a bijection $u(0)\leftrightarrow u(T)$,
for the above breaks down to application of the bijection $e^{T\Delta_{\gamma_0}}$, cf.\
Proposition~\ref{inj-prop}, and a translation  {in $L_2(\Omega)$}  by the fixed vector $y_f-z_g$. 

We are now ready to obtain unique solvability of the inhomogeneous final value \mbox{problem \eqref{eq:heat_fvp}}.
Our result for this is similar to the abstract Theorem~\ref{fvp-thm} (as is its proof),
except for the important clarification that the boundary data $g$ 
\emph{do} appear in the compatibility condition, but only via the term $z_g$:

\begin{Theorem}  \label{yz-thm}
For given data $f \in L_2(0,T; H^{-1}(\Omega))$, $g \in H^{1/2}(S)$,  
$u_T \in L_2(\Omega)$ 
the final value problem \eqref{eq:heat_fvp} is solved by a function $u\in X_1$, whereby 
\begin{equation} 
   X_1= L_2(0,T; H^1(\Omega)) \bigcap C([0,T];L_2(\Omega)) \bigcap H^1(0,T; H^{-1}(\Omega)),
\end{equation}
if and only if the data in terms of \eqref{yf-id} and \eqref{zg-id} satisfy the compatibility
condition 
\begin{equation}  \label{yz-cnd}
  u_T - y_f + z_g \in D(e^{-T \Delta_{\gamma_0}}).
\end{equation}
In the affirmative case, $u$ is uniquely determined in $X_1$ and has the representation
\begin{align}\label{yz-id}
  u(t) = 
   e^{t \Delta_{\gamma_0}} e^{-T \Delta_{\gamma_0}} (u_T - y_f + z_g) 
   + \int_0^t e^{(t-s)\Delta} f(s) \,ds - \dashint_0^t \Delta e^{(t-s) \Delta_{\gamma_0}} K_0 g(s) \,ds,
\end{align}
where the three terms all belong to $X_1$ as functions of $t$.
\end{Theorem}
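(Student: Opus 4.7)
The plan is to mirror the argument for Theorem~\ref{fvp-thm}, with Proposition~\ref{ug-prop} playing the role of Theorem~\ref{thm:repr_for_u}. The key observation is that evaluating the identity \eqref{eq:ug} at $t=T$ yields the ``fundamental relation''
\begin{equation*}
   u(T) = e^{T\Delta_{\gamma_0}}u(0) + y_f - z_g
\end{equation*}
for every $u\in X_1$ satisfying $u'-\Delta u=f$ and $\gamma_0u=g$. This identity gives a bijective correspondence $u(0)\leftrightarrow u(T)$, exactly as in Theorem~\ref{thm:bijection}, since $e^{T\Delta_{\gamma_0}}$ is injective by Proposition~\ref{inj-prop}.

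For necessity, assume $u\in X_1$ solves \eqref{eq:heat_fvp}. The displayed relation rewrites as $u_T-y_f+z_g=e^{T\Delta_{\gamma_0}}u(0)$, so this difference lies in $R(e^{T\Delta_{\gamma_0}})=D(e^{-T\Delta_{\gamma_0}})$, and we may isolate $u(0)=e^{-T\Delta_{\gamma_0}}(u_T-y_f+z_g)$. Plugging this into \eqref{eq:ug} produces \eqref{yz-id}, which in particular gives uniqueness in $X_1$. Alternatively, for uniqueness take the difference $v=u_1-u_2$ of two solutions; then $v$ solves the boundary homogeneous problem with $f=0$, $g=0$, and the bijection forces $v(0)=0$, whence $v\equiv 0$ by the uniqueness part of Proposition~\ref{heat-prop}.

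For sufficiency, assume \eqref{yz-cnd} holds. Then $u_0:=e^{-T\Delta_{\gamma_0}}(u_T-y_f+z_g)$ is a well-defined vector in $L_2(\Omega)$, so Proposition~\ref{heat-prop} produces a unique $u\in X_1$ solving the initial value problem \eqref{eq:heat_ivp} with initial datum $u_0$ and the given $f,g$. Applying Proposition~\ref{ug-prop} to this $u$ and setting $t=T$, the fundamental relation gives
\begin{equation*}
  u(T)=e^{T\Delta_{\gamma_0}}u_0+y_f-z_g=(u_T-y_f+z_g)+y_f-z_g=u_T,
\end{equation*}
so $u$ solves the final value problem, and substituting $u_0$ into \eqref{eq:ug} yields exactly the representation \eqref{yz-id}.

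It remains to verify that each of the three summands in \eqref{yz-id} lies in $X_1$ as a function of $t$. The middle integral belongs to $X_1$ (indeed to $X_0\subset X_1$) by the $f$-part of Theorem~\ref{thm:repr_for_u} applied in the Gelfand triple of Section~\ref{homoheat-ssect}. The leading semigroup term $e^{t\Delta_{\gamma_0}}e^{-T\Delta_{\gamma_0}}(u_T-y_f+z_g)=e^{t\Delta_{\gamma_0}}u_0$ is the solution of the semi-homogeneous Cauchy problem with initial datum $u_0\in L_2(\Omega)$, hence lies in $X_0\subset X_1$ by Theorem~\ref{thm:Temam}. Since the total $u\in X_1$ and the other two summands are in $X_1$, the improper integral $\dashint_0^t\Delta e^{(t-s)\Delta_{\gamma_0}}K_0g(s)\,ds$ must also define an element of $X_1$ by difference. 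The main delicate point in the whole argument is thus already absorbed into Proposition~\ref{ug-prop}: the convergence of the improper Bochner integral defining $z_g$ and the validity of the Leibniz-type identity \eqref{eq:w} despite the singularity of $\Delta_{\gamma_0}e^{(T-s)\Delta_{\gamma_0}}$ at $s=T$; once these are granted, the final value theorem reduces to applying the bijection $u(0)\leftrightarrow u(T)$ exactly as in the abstract case.
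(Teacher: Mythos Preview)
Your argument is correct and follows essentially the same route as the paper's proof: both derive necessity and the representation from the bijection \eqref{uTu0-id} via Proposition~\ref{ug-prop}, obtain sufficiency by solving the initial value problem with $u_0=e^{-T\Delta_{\gamma_0}}(u_T-y_f+z_g)$ through Proposition~\ref{heat-prop}, and then check membership of the three summands in $X_1$. The only minor variation is that the paper identifies the improper integral directly as the $X_1$-solution of \eqref{eq:heat_ivp} with data $(0,g,0)$, whereas you obtain this by difference after handling the other two terms; both are valid.
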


\begin{proof}
Given a solution $u\in X_1$, the bijective correspondence yields
$u_T=e^{T\Delta_{\gamma_0}}u(0)+y_f-z_g$, so that \eqref{yz-cnd} necessarily holds. 
Inserting its inversion
$u(0)=e^{-T\Delta_{\gamma_0}}(u_T-y_f+z_g)$ into the solution formula from
Proposition~\ref{ug-prop} yields \eqref{yz-id}; thence uniqueness of $u$.

If \eqref{yz-cnd} does hold, $u_0=e^{-T\Delta_{\gamma_0}}(u_T-y_f+z_g)$ is a vector in
$L_2(\Omega)$, so the initial value problem with data $(f,g,u_0)$ can be solved by means of
Proposition~\ref{heat-prop}. Then one obtains a function $u\in X_1$ that also solves the final value
problem \eqref{eq:heat_fvp}, since in particular $u(T)=u_T$ is satisfied, cf.\  the
{bijection}~\eqref{uTu0-id} and the definition of $u_0$. 

The final regularity statement follows from the fact that $X_1$ also is the solution space for the
initial value problem in Proposition~\ref{heat-prop}. Indeed, even the improper integral is a
solution in $X_1$ to \eqref{eq:heat_ivp} with data $(f,g,u_0)=(0,g,0)$, according to
Proposition~\ref{ug-prop}; cf.\ the proof of Lemma~\ref{zg-lem}. 
Similarly the integral containing $f$ solves an initial value problem with
data $(f,0,0)$, hence is in $X_1$. In addition, the first term \mbox{in \eqref{yz-id}} solves
\eqref{eq:heat_ivp} for data 
$(0,0,e^{-T \Delta_{\gamma_0}} (u_T - y_f + z_g))$.
\end{proof}

We let $Y_1$ stand for the set of admissible data. 
Within $L_2(0,T;H^{-1}(\Omega))\oplus H^{1/2}(\Gamma) \oplus L_2(\Omega)$ it is the subspace given, 
via the map $\Phi_1(f,g,u_T)= u_T-y_f+z_g$, as
\begin{align}
  Y_1= \Big\{(f,g,u_T) \Bigm| u_T - y_f + z_g \in D(e^{-T \Delta_{\gamma_0}}) \Big\}
     =D(e^{-T \Delta_{\gamma_0}} \Phi_1).
\end{align}

Correspondingly we endow $Y_1$ with the graph norm of {the operator} $e^{-T \Delta_{\gamma_0}} \Phi_1$, 
that is, of the composite map $(f,g,u_T)\mapsto e^{-T \Delta_{\gamma_0}}(u_T-y_f+z_g)$.
Again, $e^{-T\lap_D}\Phi_1(f,g,u_T)$ equals the initial state $u(0)$ steered by $f$, $g$ 
to the final state $u(T)=u_T$, as is evident for $t=0$ in \eqref{yz-id}.

Recalling that $\A=\mlap\colon H^{1}_0(\Omega)\to H^{-1}(\Omega)$, the above-mentioned graph norm is given by
\begin{multline}  \label{Yheat-nrm}
  \| (f,g,u_T)\|_{Y_1}^2 =  \|u_T\|^2_{L_2(\Omega)} + \|g\|^2_{H^{1/2}(Q)} +
  \|f\|^2_{L_2(0,T;H^{-1}(\Omega))}
\\
   + \int_\Omega\Big|e^{-T\Delta_{\gamma_0}}\Big(u_T - \int_0^T\!e^{-(T-s)\A}f(s)\,ds + 
   \dashint_0^{T}\! \A e^{(T-s)\Delta_{\gamma_0}} K_0 g(s) \,ds\Big)\Big|^2\,dx.
\end{multline} 
Here the last term is written with explicit integrals to emphasize the
complexity of the fully inhomogeneous boundary and final value problem \eqref{eq:heat_fvp}. 

Completeness of $Y_1$ follows from continuity of $\Phi_1$, cf.\  Lemma~\ref{zg-lem} concerning
$z_g$. Indeed, its composition to the left with the closed operator $e^{-T\Delta_{\gamma_0}}$ in $L_2(\Omega)$ (cf.\
Proposition~\ref{inverse-prop}) is also closed. Hence its domain
$D(e^{-T\Delta_{\gamma_0}} \Phi_1)=Y_1$
is complete with respect to the graph norm in \eqref{Yheat-nrm}. As this norm is
induced by an inner product when the norm of $H^{-1}(\Omega)$ is taken as $\vvvert\cdot\vvvert_*$ from
\eqref{ipV*-id}, and when $H^{1/2}(Q)$ is normed as in Remark~\ref{HsS-rem}, $Y_1$ is a Hilbert(-able) space.  

Analogously to the proof of Theorem \ref{wp-thm}, continuity of 
$(f,g,u_T)\mapsto u$ is now seen at once by inserting the
expression  $u_0 = e^{-T \Delta_{\gamma_0}} (u_T - y_f +z_g)$ from  \eqref{uTu0-id} into the
estimate in Proposition~\ref{heat-prop}. Thus we obtain:

\begin{Corollary} \label{yz-cor}
The unique solution $u$ of problem \eqref{eq:heat_fvp} lying in the Banach space $X_1$ 
depends continuously on the data  $(f,g,u_T)$ in the Hilbert space $Y_1$, when these are given 
 the norms in \eqref{Xheat-nrm} and \eqref{Yheat-nrm}, respectively.
\end{Corollary}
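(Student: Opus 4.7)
The plan is to reduce Corollary~\ref{yz-cor} to the well-posedness already established for the initial value problem in Proposition~\ref{heat-prop}, via the bijective correspondence $u(0)\leftrightarrow u(T)$ encoded in \eqref{uTu0-id}. Concretely, since $(f,g,u_T)\in Y_1$ means precisely that $u_T-y_f+z_g\in D(e^{-T\Delta_{\gamma_0}})$, the vector
\begin{equation}
  u_0:=e^{-T\Delta_{\gamma_0}}(u_T-y_f+z_g)
\end{equation}
is a well-defined element of $L_2(\Omega)$, and by Theorem~\ref{yz-thm} the unique $X_1$-solution $u$ of the final value problem \eqref{eq:heat_fvp} coincides with the unique $X_1$-solution of the initial value problem \eqref{eq:heat_ivp} with data $(f,g,u_0)$ just defined.

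Next, I would simply invoke the a priori estimate \eqref{heat-ineq} of Proposition~\ref{heat-prop} applied to this reinterpretation, giving
\begin{equation}
  \|u\|_{X_1}^2\le c\bigl(\|u_0\|_{L_2(\Omega)}^2+\|f\|_{L_2(0,T;H^{-1}(\Omega))}^2+\|g\|_{H^{1/2}(S)}^2\bigr).
\end{equation}
By definition of $u_0$ the first term on the right equals the fourth contribution to $\|(f,g,u_T)\|_{Y_1}^2$ in \eqref{Yheat-nrm}, while the remaining terms on the right are two of the other summands of $\|(f,g,u_T)\|_{Y_1}^2$. Dropping the nonnegative term $\|u_T\|_{L_2(\Omega)}^2$ only strengthens the right-hand side, so one concludes
\begin{equation}
  \|u\|_{X_1}^2\le c\|(f,g,u_T)\|_{Y_1}^2,
\end{equation}
which is exactly the asserted continuity of the solution operator, since $(f,g,u_T)\mapsto u$ is linear.

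There is no real obstacle here: the heavy lifting has already been done in Proposition~\ref{heat-prop} (well-posedness of the inhomogeneous initial value problem, in particular the handling of the lifted boundary datum $w=\tilde K_0 g$), in the construction of the improper Bochner integral $z_g$ and the bound of Lemma~\ref{zg-lem}, and in Theorem~\ref{yz-thm} which identifies $u_0$ in terms of $(f,g,u_T)$. The proof is a direct analogue of that of Theorem~\ref{wp-thm}, with the sole bookkeeping difference being the extra term $z_g$ built into the $Y_1$-norm; once the graph norm is recognised as the natural norm of $e^{-T\Delta_{\gamma_0}}\Phi_1$, the estimate is immediate.
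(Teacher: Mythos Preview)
Your proof is correct and follows essentially the same approach as the paper: insert the expression $u_0 = e^{-T\Delta_{\gamma_0}}(u_T - y_f + z_g)$ from \eqref{uTu0-id} into the a priori estimate of Proposition~\ref{heat-prop}, then bound the resulting right-hand side by $\|(f,g,u_T)\|_{Y_1}^2$. The paper states this in a single sentence just before the corollary, explicitly noting the analogy with Theorem~\ref{wp-thm}, which you also identified.
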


Taken together, Theorem~\ref{yz-thm} and Corollary~\ref{yz-cor} yield that the fully inhomogeneous
final value problem \eqref{eq:heat_fvp} for the heat equation is well posed in the spaces $X_1$ and $Y_1$.

\section{Final Remarks} \label{final-sect}\vspace{-9pt}
\subsection{Applicability}
For the special features of final value problems for Lax--Milgram operators $A$, 
it is of course decisive to have a proper subspace $D(e^{TA})\subsetneq H$,
for if $D(e^{TA})$ fills $H$ the compatibility condition \eqref{eq:comp_cond} will be
redundant---and \eqref{eq:comp_cond} moreover only becomes stronger as the terminal time $T$
increases, \mbox{if $D(e^{TA})$} decreases with larger $T$.

Within semigroup theory on a Banach space $B$, the above  means that the ranges $R(e^{t\Gen})$ should
form a \emph{strictly} descending chain of inclusions in the sense that, for $t'>t>0$,
\begin{align} \label{incl'-eq}
  R(e^{t'\Gen}) \subsetneq R(e^{t\Gen})\subsetneq B .
\end{align}

Non-strictness is here characterised by the rather special spectral
properties of $\Gen$ in (iv): 

\begin{Theorem} \label{incl-thm}
  For a $C_0$-semigroup $e^{t\Gen}$ with $\|e^{t\Gen}\|\le M e^{\omega t}$ the following are
  equivalent:
  \begin{itemize}
  \item[{\rm (i)}] $e^{t\Gen}$ is injective  and 
       $R(e^{t' \Gen})= R(e^{t\Gen})$ holds for some $t,t'$ with $t'>t\ge 0$.
  \item[{\rm (ii)}] $e^{t\Gen}$ is injective with range $R(e^{t\Gen})=B$ for every $t\ge 0$.
  \item[{\rm (iii)}] The semigroup is embedded into a $C_0$-group $G(t)$ satisfying
        $\|G(t)\|\le Me^{\omega|t|}$; 
  \item[{\rm (iv)}] The spectrum $\sigma(\Gen)$ is contained in the strip in  $\C$ where
  $-\omega\le \Re \lambda\le\omega$ and 
  \begin{equation}
    \|(\Gen-\lambda)^{-n}\|\le M(|\Re\lambda|-\omega)^{-n}
    \quad\text{ for $|\Re\lambda|>\omega$, $n\in\N$}.    
  \end{equation}
  \end{itemize}
\end{Theorem}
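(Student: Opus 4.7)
My plan is to close the cycle of implications (i)$\Rightarrow$(ii)$\Rightarrow$(iv)$\Rightarrow$(iii)$\Rightarrow$(ii)$\Rightarrow$(i), observing that (iii)$\Rightarrow$(ii) (since each group element has the inverse $G(-t)$) and (ii)$\Rightarrow$(i) are immediate. The substance lies in (i)$\Rightarrow$(ii), in (iii)$\Leftrightarrow$(iv) via the Hille--Yosida theorem, and in passing from (ii) to the remaining conditions.

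For (i)$\Rightarrow$(ii) I would set $s := t'-t > 0$ and exploit the factorisation $e^{t'\Gen} = e^{s\Gen}e^{t\Gen}$. The equality $R(e^{t'\Gen}) = R(e^{t\Gen})$ then means that for each $x \in B$ there is some $y$ with $e^{t\Gen}x = e^{s\Gen}e^{t\Gen}y$, and injectivity of $e^{t\Gen}$ cancels the common factor, giving $x = e^{s\Gen}y$. Hence $e^{s\Gen}$ is surjective. Iterating via the semigroup law yields surjectivity of $e^{ns\Gen}$ for every $n \in \N$, and for arbitrary $u > 0$ one picks $n$ with $ns \ge u$ and uses $e^{ns\Gen} = e^{u\Gen}e^{(ns-u)\Gen}$ to conclude that $e^{u\Gen}$ is surjective; combined with the injectivity hypothesised in (i), this yields (ii).

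For (iii)$\Leftrightarrow$(iv) the key is to apply Hille--Yosida to both $\Gen$ and $-\Gen$. From (iii), the $C_0$-semigroups $e^{t\Gen}$ and $G(-t)$ are each bounded by $Me^{\omega t}$, so Hille--Yosida yields $\|(\Gen-\lambda)^{-n}\| \le M(\Re\lambda-\omega)^{-n}$ for $\Re\lambda > \omega$, while the analogous estimate $\|(\mu + \Gen)^{-n}\| \le M(\Re\mu - \omega)^{-n}$ for $\Re\mu > \omega$ gives, after the substitution $\lambda = -\mu$, the mirror image on $\Re\lambda < -\omega$; together these constitute (iv). Conversely, (iv) supplies the Hille--Yosida hypotheses for both $\Gen$ and $-\Gen$ with exponential bound $Me^{\omega t}$; the resulting two semigroups splice into a $C_0$-group via $G(t) := e^{t\Gen}$ for $t \ge 0$ and $G(t) := e^{-t(-\Gen)}$ for $t < 0$, with the required bound $\|G(t)\| \le Me^{\omega|t|}$.

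For the last link (ii)$\Rightarrow$(iv) I would show that $S(t) := (e^{t\Gen})^{-1}$ is a $C_0$-semigroup generated by $-\Gen$. Each $S(t)$ is bounded by the closed graph theorem, the semigroup law follows from inverting $e^{(s+t)\Gen} = e^{s\Gen}e^{t\Gen}$, and strong continuity at $0^+$ is obtained by writing an arbitrary $x = e^{t_0\Gen}y$ (available by surjectivity) and using the commutation $S(t)e^{(t_0-t)\Gen} = e^{(t_0-t)\Gen}S(t)$ to compute $S(t)x = e^{(t_0-t)\Gen}y \to x$ as $t\to 0^+$. Identifying the generator as $-\Gen$ by differentiating $S(t)e^{t\Gen} = I$ on $D(\Gen)$, Hille--Yosida applied to $-\Gen$ delivers the left-half-plane resolvent estimate, the right-half-plane one coming from $\Gen$ itself. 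The main obstacle I anticipate is the alignment of growth constants: the direct argument only yields $\|S(t)\| \le M'e^{\omega't}$ for some $M',\omega'$ produced by the uniform boundedness principle, so matching the symmetric bound with the \emph{same} $M,\omega$ as in the original hypothesis must be done indirectly, by locating the resolvent set in the strip $|\Re\lambda|\le\omega$ through (iv) and then reapplying Hille--Yosida with the sharp constants.
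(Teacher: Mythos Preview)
Your (i)$\Rightarrow$(ii) is essentially the paper's argument: both factor $e^{t'\Gen}=e^{s\Gen}e^{t\Gen}$ and use injectivity to deduce surjectivity of $e^{s\Gen}$ for $s=t'-t$, then propagate to all times. The structural difference is that the paper does not pass through (ii)$\Rightarrow$(iv) by building the inverse semigroup $S(t)=(e^{t\Gen})^{-1}$ by hand; instead, having shown $0\in\rho(e^{t\Gen})$ for a single $t>0$, it invokes \cite[Thm.~1.6.5]{Paz83} to get (ii) and (iii) in one stroke, and then quotes \cite[Thm.~1.6.3, Rem.~1.5.4]{Paz83} for (iii)$\Leftrightarrow$(iv). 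In effect the paper outsources to Pazy the work you do explicitly (boundedness, strong continuity and generator identification for $S(t)$).

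The obstacle you flag at the end is genuine, and your proposed remedy is circular: you cannot ``locate the resolvent set in the strip $|\Re\lambda|\le\omega$ through (iv)'' while (iv) is precisely what you are proving. In fact (ii) does \emph{not} force (iii) or (iv) with the \emph{same} $M,\omega$ as in the hypothesis. With $\Gen=\op{diag}(1,-2)$ on $\C^2$ one has $\|e^{t\Gen}\|= e^{t}$ for $t\ge0$, so the hypothesis holds with $M=1$, $\omega=1$, yet $\sigma(\Gen)=\{1,-2\}\not\subset\{|\Re\lambda|\le1\}$ and $\|(e^{t\Gen})^{-1}\|=e^{2t}$. The equivalence is only correct if (iii) and (iv) are read as holding for \emph{some} pair of constants; under that reading your argument does go through, since the uniform boundedness principle supplies such a pair for $S(t)$, and then Hille--Yosida applied to $-\Gen$ gives the left half-plane estimate as you describe. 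The paper's proof, which simply cites Pazy for these implications, is subject to the same reading.
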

\begin{proof}
Given (i) for $t>0$, then $R(e^{(t+\delta)\Gen})=R(e^{t\Gen})$ holds for all
$\delta\in[0,t'-t]$ in view of the \mbox{inclusions \eqref{incl-eq}}; and to
every $x\in B$ some $y$ satisfies $e^{t\Gen}e^{\delta \Gen}y=e^{t\Gen}x$, which by injectivity gives 
$x=e^{\delta \Gen}y$, so that $e^{\delta \Gen}$ is surjective for such $\delta$. 
Hence $e^{t\Gen}=(e^{(t/N)\Gen})^N$ is a 
bijection on $B$ with bounded inverse, i.e., $0\in\rho(e^{t\Gen})$.
If (i) holds for $t=0$, clearly $0\in\rho(e^{t'\Gen})$.
In both cases (ii) holds because 
$0\in\rho(e^{s\Gen})$  must necessarily hold for $s>0$ according to
\cite[Thm.~1.6.5]{Paz83}, which also states that (iii) holds. (The proof there uses
\cite[Lem.~1.6.4]{Paz83}, that can be invoked directly from (ii) since the inverse of $e^{t\Gen}$
is bounded by the Closed Graph Theorem.)
Conversely (iii) yields $R(e^{t\Gen})=R(G(t))=B$ and injectivity for all $t\ge0$, 
so (ii) and hence (i) holds.
That (iv)$\implies$(iii) is part of the content of \cite[Thm.~1.6.3]{Paz83}, which also states that
(iii) implies  (iv) for real $\lambda$, but the full statement in (iv) is then obtained from 
\cite[Rem.~1.5.4]{Paz83}. 
\end{proof}

This result is essentially known, but nonetheless given as a theorem, as it clarifies how widely the present paper applies.
Indeed, for $V$-elliptic Lax--Milgram operators $A$, the semigroups are uniformly bounded, so $\omega=0$;  
thus the strip in (iv) is the imaginary axis $\im\R$, but this is contained in 
$\rho(A)$ by Lemma~\ref{Agen-lem}.
So except in the pathological case $\sigma(A)=\emptyset$, (iv) will always be violated, as will (i) and (ii).
However, since in (i) and (ii) the operator $e^{-tA}$ is injective by Proposition~\ref{inj-prop}, 
the strict inclusions in~\eqref{incl'-eq} hold for $\Gen=-A$. This proves:

\begin{Proposition} \label{dom-prop}
  For a $V$-elliptic Lax--Milgram operator $A$ with $\sigma(A)\ne\emptyset$ there is a strictly
  descending chain of dense domains $D(e^{tA})$ of the inverses $e^{tA}=(e^{-tA})^{-1}$, i.e.\
\begin{equation} \label{incl''-eq}
  D(e^{t' A}) \subsetneq D(e^{tA})\subsetneq H\qquad\text{for $t'>t>0$} .
\end{equation}
\end{Proposition}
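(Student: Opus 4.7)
The plan is to deduce Proposition~\ref{dom-prop} from Theorem~\ref{incl-thm} applied to the $C_0$-semigroup $e^{-tA}$, i.e.\ to the generator $\Gen = -A$, by ruling out condition (iv) via the $V$-ellipticity and then reading off the failure of (i).

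First I would observe that by Lemma~\ref{Agen-lem} and Theorem~\ref{Pazy-thm}, the semigroup $e^{-tA}$ is uniformly bounded on $[0,\infty[$, so in the notation of Theorem~\ref{incl-thm} one may take $\omega=0$ with some $M\ge 1$. Under this choice, the strip $\{-\omega\le\Re\lambda\le\omega\}$ in condition (iv) collapses to the imaginary axis $\im\R$.

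Next, I would exhibit spectrum of $-A$ off the imaginary axis. The $V$-ellipticity gives $m(A)\ge C_1C_4/C_2>0$, whence $\sigma(A)\subset\nu(A)^{-}\subset\{z\in\C : \Re z\ge m(A)\}$ lies strictly to the right of $\im\R$, so $\sigma(-A)=-\sigma(A)\subset\{z\in\C : \Re z\le-m(A)\}$. Because the hypothesis $\sigma(A)\ne\emptyset$ gives $\sigma(-A)\ne\emptyset$, the spectrum of $-A$ is not contained in $\im\R$, and condition (iv) of Theorem~\ref{incl-thm} fails. By the equivalence stated there, (i) fails as well.

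Now I would combine this with injectivity. Proposition~\ref{inj-prop} tells us that $e^{-tA}$ is injective for every $t\ge 0$. Since (i) asserts the conjunction of injectivity with some range equality $R(e^{-t'A})=R(e^{-tA})$ for some pair $t'>t\ge 0$, the failure of (i) in the presence of injectivity means that
\begin{equation}
  R(e^{-t'A}) \ne R(e^{-tA}) \qquad \text{for every } t'>t\ge 0.
\end{equation}
Combining this with the inclusion \eqref{incl-eq} upgrades it to the strict inclusion $R(e^{-t'A})\subsetneq R(e^{-tA})$, and specialising $t=0$ gives $R(e^{-t'A})\subsetneq H$ for every $t'>0$. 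Translating via $D(e^{tA})=R(e^{-tA})$ yields \eqref{incl''-eq}, while density of each $D(e^{tA})$ in $H$ was already recorded prior to Proposition~\ref{inverse-prop} as a consequence of injectivity of the adjoint semigroup $e^{-tA^*}$.

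The only thing requiring any care is the bookkeeping around $\omega$: the uniform boundedness afforded by analyticity and $V$-ellipticity is what makes $\omega=0$ admissible, and this in turn is what reduces the spectral exclusion in (iv) to the clean geometric statement that $\sigma(-A)$ meets the open left half-plane—which is immediate from $m(A)>0$. No further estimates are needed.
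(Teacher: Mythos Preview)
Your proof is correct and follows essentially the same route as the paper: reduce to $\omega=0$ via the uniform boundedness from Theorem~\ref{Pazy-thm}, observe that the nonempty spectrum of $-A$ lies off the imaginary axis so that (iv) of Theorem~\ref{incl-thm} fails, and then use the injectivity from Proposition~\ref{inj-prop} to convert the failure of (i) into strict range inclusions. The only cosmetic difference is that the paper invokes Lemma~\ref{Agen-lem} directly to place $\im\R$ in the resolvent set, whereas you argue via $m(A)>0$ and $\sigma(A)\subset\overline{\nu(A)}$; both reach the same conclusion.
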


Therefore, for elliptic Lax--Milgram operators $A$ with non-empty spectrum, 
the compatibility \mbox{condition \eqref{eq:comp_cond}} is without redundancy, and it gets effectively stronger on longer time intervals.
Previously, these properties were verified only in a special case in Proposition~\ref{domain-prop}.

\begin{Example}
  It is illuminating to consider the final value problem on $\Rn$, for $\alpha\in\C\setminus\R$,
  \begin{equation} \label{Herbst-fvp}
    \partial_t u-\Delta u+\alpha x_1u=f,\qquad u(T)=u_T.
  \end{equation} 
At first glance this might seem to be a minor variation on the heat problem in
Section~\ref{heat-sect}, in fact just a zero-order perturbation; and notably a change to $\Omega=\Rn$.
However, interestingly it cannot be treated within the present framework: 
in a paper fundamental to analysis of the Stark effect, 
  Herbst~\cite{Her79} proved for the operator $h(\alpha)=-\Delta+\alpha x_1I$ with
  $\Im\alpha\ne0$ that the minimal realisation
  $\bar h(\alpha)$ also is maximal in $L_2(\Rn)$ with \emph{empty} spectrum, 
  \begin{equation}
    \sigma(\bar h(\alpha))=\emptyset.  
  \end{equation}
   Moreover, the numerical range of $h(\alpha)$ itself is an open, slanted halfplane 
  \begin{equation}
    \nu(h(\alpha))=\{\,z\in\C\mid \Re z > \tfrac{\Re\alpha}{\Im\alpha}\Im z \,\}.
  \end{equation}
  
  Therefore $\bar h(\alpha)$ is not sectorial, as $\nu(\bar h(\alpha))\subset
  \overline{\nu(h(\alpha))}$ shows that \eqref{Asect-id} does not hold, so existence and uniqueness for the
  forward problem cannot be derived from Theorem~\ref{thm:Temam}. 
  The fact proved in \cite{Her79} 
  that $e^{-\im t \bar h(\alpha)/\alpha}$ is a contraction semigroup, which  for $\alpha=\im$ applies to 
  $e^{-t\bar h(\im)}$ that pertains to \eqref{Herbst-fvp}, entails via the Hille--Yosida theorem
  the estimate in Theorem~\ref{incl-thm}  \mbox{(iv)} for $-\bar h(\im)$, but only for $\Re\lambda>0$. 
  Since $\Re\lambda<0$ is not covered, it is despite the empty spectrum of $\Gen=-\bar
  h(\im)=\Delta-\im x_1$ not clear whether \eqref{incl'-eq} holds with strict inclusions.
  Thus it seems open which properties final value problem \eqref{Herbst-fvp} for the Herbst operator
  $h(\alpha)$ can be shown to have.
\end{Example}

\begin{Remark} \label{GreHel-rem}
  Recently Grebenkov, Helffer and Henry \cite{GrHelHen17} studied the complex Airy operator 
  $A=-\Delta+\im x_1$  in dimension $n=1$. They considered realizations defined on $\R_+$ by 
  Dirichlet, Neumann and Robin conditions using the Lax--Milgram lemma, so results on boundary
  homogenous final value problems for $-\frac{d^2}{dx^2}+\im x$ should be straightforward to write
  down, as in Section~\ref{homoheat-ssect}. The study was extended to dimension $n=2$, under the name of
  the Bloch--Torrey operator, by Grebenkov and Helffer in \cite{GrHe17}, where bounded and
  unbounded domains with $C^\infty$ boundary was treated; in cases with non-empty spectrum there
  should be easy consequences for the associated final value problems. 
  The realisations induced by a transmission condition at an interface, which was the main theme in
  \cite{GrHelHen17, GrHe17}, are defined from a recent extension of the Lax--Milgram lemma due to
  Almog and Helffer \cite{AlHe15}, so in this case the properties of the 
  corresponding final value problems are as yet unclear.
\end{Remark} 
\begin{Remark}
  We expect that extension of the theory to certain systems of parabolic equations with prescribed boundary
  and final value data should be possible. A useful framework for the discussion of this type of
  problems could be the pseudo-differential boundary operator calculus, with matrix-formed
  operators acting in Sobolev spaces of sections of vector bundles, as described in Section 4.1 of
  \cite{G1}.
  At least the present discussion should carry over to this kind of problems when the realisations
  called $(P+G)_T$ there are variational, i.e., when they are Lax--Milgram operators for certain
  triples $(H,V,a)$; this property is analysed in great depth in Section 1.7 of \cite{G1}, to which
  we refer the interested reader. It is conceivable that the variational property is unnecessary,
  and might be avoided using the pseudo-differential boundary operator calculus, but this
  seems to require an addition to the theory of parabolic systems covered by the calculus in the form of a
  result on backward uniqueness. 
\end{Remark}

\subsection{Notes} 
Classical considerations were collected by Liebermann~\cite{Lie05} for second order parabolic differential
operators (cf.\ also Evans~\cite{Eva10}), with references back to the fundamental $L_2$-theory
including boundary points of Ladyshenskaya, Solonnikov and Uraltseva~\cite{LaSoUr68}.
A fundamental framework of functional analysis for parabolic Cauchy problems was developed by Lions
and Magenes~\cite{LiMa72}. Later a full regularity theory in scales of anisotropic $L_2$-Sobolev
spaces was worked out for general pseudo-differential parabolic problems by Grubb and Solonnikov~\cite{GrSo90},
who obtained the necessary and sufficient compatibility conditions on the data, including
coincidence for half-integer values of the smoothness; cf.~also \cite[Thm.~4.1.2]{G1}. 
This study was carried over to the corresponding anisotropic $L_p$-Sobolev spaces by
Grubb~\cite{G4}. A further extension to different integrability properties in time and space was
taken up in a systematic study of anisotropic mixed-norm Triebel--Lizorkin spaces on a time cylinder
and its flat and curved boundaries by Munch Hansen, the second author and Sickel \cite{JoMHSi3}. 
Compatibility conditions were addressed for the heat equation in mixed-norm Triebel--Lizorkin spaces
in \cite[Sect.\ 6.5]{JoMHSi3} and \cite[Ch.\ 7]{SMHphd}. In particular the latter showed that,
except for coincidence at half integer smoothness,
the recursive formulation of the compatibility conditions in \cite{GrSo90} is equivalent to the requirement
that the data belong to the null space of a certain matrix-formed operator at the curved corner $\{0\}\times\partial\Omega$.
Recent semigroup and Laplace transformation methods were exposed in \cite{ABHN11}.
Denk and Kaip~\cite{DeKa13} treated parabolic  multi-order systems via the Newton polygon and obtained
$L_p$--$L_q$ regularity results using $\cal R$-boundedness.

To our knowledge, the literature contains no previous account for 
pairs of spaces $X$ and $Y$ in which final value problems for parabolic differential equations are well posed. 

An early contribution on final value problems for the heat equation was given in 1955
by John~\cite{John55}, who dealt with numerical aspects. 
In 1961, the idea of reducing the data space to obtain
well-posedness was adopted by Miranker \cite{Miranker61}  for 
the homogeneous heat equation on $\R$, and he showed that
in the space of $L_2$-functions having compactly supported Fourier transform
there is a bijection between the initial and terminal states.

In addition to the injectivity of analytic semigroups in Proposition~\ref{inj-prop},
it is known  that $u(0)$ is uniquely determined from $u(T)$ even for $t$-dependent sesquilinear
forms $a(t;v,w)$. This was shown by Lions and Malgrange~\cite{LiMl60} with an involved argument. 
It would take us too far to quote the large amount of work on the backward uniqueness in more
loosely connected situations, often adopting the log-convexity method 
(if $|u(t)|\le |u(T)|^{t/T} |u(0)|^{1-t/T}$ then $u(T)=0$ implies $u(t)=0$ for all $t>0$, hence
$u(0)=0$ by continuity) attributed to Krein, Agmon and Nirenberg.
Instead we refer the reader to \cite{Ku07,DiNg11,DaEr18} and the references therein. 

The method of quasi-reversibility for final value problems was introduced systematically in 1967 by Latt\`es and Lions
\cite{LattesLions67}. The idea is to perturb the equation $u'+Au=0$ by adding, e.g., $-\varepsilon^2 A^2$
to obtain a well-posed problem and to derive for its solution $u_\varepsilon$ that $u_\varepsilon(x,T)$ approaches $u_T$ for
$\varepsilon\to0$, circumventing analysis of well-posedness of the original final value
problem. They assumed $f=0$ for a $V$-elliptic self-adjoint $A$. 

Showalter \cite{Sho74} addressed questions that were partly similar to ours. 
He proposed to perturb instead by $\varepsilon A\partial_t$ under the condition that
$A$ is $m$-accretive with semiangle $\theta \le\pi/4$ on a Hilbert space for $f=0$. 
He claimed uniqueness of solutions, and existence if and only if the final data
via the Yosida approximations of $-A$ allow approximation of the initial state. 
Showalter also identified injectivity of operators in analytic semigroups as an important tool.
However, his reduction had certain shortcomings; cf.\ Remark~\ref{Showinj-rem}.
In comparison we obtain the full well-posedness for general $f\ne0$ and $V$-elliptic operators of
semiangle $\theta = \operatorname{arccot}(C_3 C_4^{-1})$ belonging to the larger interval $\,]0,\pi/2[\,$.

An extensive account of the area around 1975, and of the many previous contributions using a variety of techniques, was
provided by Payne~\cite{Pay75}. A more recent exposition can be found in Chapters~2 and 3 in Isakov's
book~\cite{Isa98}, and for methods for inverse problems in general the reader may consult
Kirsch~\cite{Kir96}.

In the closely related area of exact and null controllability of parabolic problems, 
the inequality in Corollary~\ref{cor:u-inequality} is a little weaker than the observability inequality
for the full subdomain $O=\Omega$. 
In this context the role of observability was reviewed by Fernandez-Cara and Guerrero \cite{FCGu06}, emphasising 
Carleman estimates as a powerful tool in the area. A treatise on Carleman estimates in the parabolic context
was given by Koch and Tataru \cite{KoTa09}.

%%%%%%%%%%%%%%%%%%%%%%%%%%%%%%%%%%%%%%%%%%
\section*{Acknowledgement}
The authors thank H.~Amann for his interest and comments on the literature.
Also our thanks are due to an anonymous reviewer for indicating the concise proof of Corollary~\ref{cor:u-inequality}.

\providecommand{\bysame}{\leavevmode\hbox to3em{\hrulefill}\thinspace}
\providecommand{\MR}{\relax\ifhmode\unskip\space\fi MR }
% \MRhref is called by the amsart/book/proc definition of \MR.
\providecommand{\MRhref}[2]{%
  \href{http://www.ams.org/mathscinet-getitem?mr=#1}{#2}
}
\providecommand{\href}[2]{#2}

\end{document}